\numberwithin{equation}{section}
\newcommand{\R}{\mathbb{R}}
\newcommand{\be}{\begin{equation}}
\newcommand{\ee}{\end{equation}}
\newcommand{\ba}{\begin{aligned}}
\newcommand{\ea}{\end{aligned}}
\newcommand{\eps}{\epsilon}
\newcommand{\Pt}{\mathcal{P}_2}
\newcommand{\PR}{\mathcal{P}(\R)}
\newtheorem{Theorem}{Theorem}[section]
\newtheorem{Lemma}[Theorem]{Lemma}
\newtheorem{Rem}[Theorem]{Remark}
\newtheorem{Def}[Theorem]{Definition}
\newtheorem{Prop}[Theorem]{Proposition}
\newtheorem{Cor}[Theorem]{Corollary}
\title{A spectral dominance approach to large random matrices: part II}
\begin{document}





\author{Charles Bertucci \textsuperscript{a}, Jean-Michel Lasry\textsuperscript{b}, Pierre-Louis Lions\textsuperscript{b,c}
}
\address{ \textsuperscript{a} CMAP, Ecole Polytechnique, UMR 7641, 91120 Palaiseau, France,\\ \textsuperscript{b} Universit\'e Paris-Dauphine, PSL Research University,UMR 7534, CEREMADE, 75016 Paris, France,\\ \textsuperscript{b}Coll\`ege de France, 3 rue d'Ulm, 75005, Paris, France. }

\maketitle

\begin{abstract}This paper is the second of a series devoted to the study of the dynamics of the spectrum of large random matrices. We study general extensions of the partial differential equation arising to characterize the limit spectral measure of the Dyson Brownian motion. We provide a regularizing result for those generalizations. We also show that several results of part I extend to cases in which there is no spectral dominance property. We then provide several modeling extensions of such models as well as several identities for the Dyson Brownian motion.\\
Keywords: Random matrix, Mean field limits, Partial differential equations.\\

MSC: 60B20   35D40  60F99
\end{abstract}




\setcounter{tocdepth}{1}

\tableofcontents
\section{Introduction}
In this paper, we precise and extend some results and proofs introduced in part I \citep{bertucci2022spectral}. Our main interest is the study of the partial differential equation (PDE) 

\be\label{dyson}
\ba
\partial_t m + \partial_x (mH[m]) = 0 \text{ in } (0,\infty)\times \mathbb{R},\\
m|_{t = 0} \in \mathcal{P}(\R),
\ea
\ee
and of some of its extensions. We shall call \eqref{dyson} the Dyson equation in the rest of the paper. In \eqref{dyson}, $H[m]$ denotes the Hilbert transform of $m$ and $\mathcal{P}(\R)$ denotes the set of probability measures on $\R$. This equation naturally arises in the theory of random matrices as the limit equation for the spectral measure of the Dyson Brownian motion. We refer to \citep{anderson2010introduction} for a comprehensive and detailed introduction to the theory of random matrices.

For the moment, most of the literature on random matrices on equations of the form of \eqref{dyson} has been focused on the exact nonlinearity $mH[m]$. Following part I, we present results on other nonlinearities of the same form as well as for equations involving general drift terms. Let us insist upon the fact that, for practical applications, it is fundamental to develop techniques which are robust to small changes in \eqref{dyson}.\\

The rest of the paper is organized as follows. In Section \ref{sec:notation}, we recall notations and results of the first part which shall be used here. We then precise and correct some results and proofs of the first part I in Section \ref{sec:cor}. In Section \ref{sec:reg}, we provide a regularizing result for this class of equations. In Section \ref{sec:extension}, we extend the results of part I to more general equations, namely ones without the spectral dominance. We also give several identities on the pure equation \eqref{dyson} in Section \ref{sec:identities}. We then conclude this paper by indicating some extensions which are natural from a modeling perspective in Section \ref{sec:model}.


\section{Notation and results of the first part}\label{sec:notation}
\subsection{Notation}
The set of Borel probability measures on $\R$ is $\mathcal{P}(\R)$, when not specified otherwise, it will be endowed with the weak $*$ convergence.  For any $1 \leq p \leq \infty$, $u \in L^p(\R)$, we denote by $H[u]$ its Hilbert transform, that is the linear anti-self adjoint operator on $L^p$ defined on smooth function with the formula
\[
H[u](x) = \int_\R \frac{u(y)-u(x)}{x-y}dy.
\]
Recall that for $m \in \mathcal{P}(\R)$, $H[m]$ is the distribution $p.v.(\frac 1x)*m$, defined against smooth test function $\phi$ through
\[
\left\langle p.v.(\frac 1x)*m,\phi \right\rangle = - \int_\R H[\phi] dm.
\]
We denote by $A_0$ the half-Laplacian operator, defined by $A_0 = \frac{d}{dx}H$. On smooth functions, it is thus given by
\[
A_0[u](x) = \int_{\R}\frac{u(x)-u(y)}{(x-y)^2}dy.
\]
Let us also define for $\delta > 0$ the operators $A_{\delta}$ and $A_{-\delta}$ with
\[
A_\delta[u](x) = \int_{|x-y| \geq \delta}\frac{u(x)-u(y)}{(x-y)^2}dy, \quad A_{-\delta}[u](x) = \int_{|x-y| < \delta}\frac{u(x)-u(y)}{(x-y)^2}dy.
\]
For a locally bounded function $u$ on a subset $\Omega$ of $\R^d$, we define for all $x \in \Omega$
\[
u_*(x) = \liminf_{y \to x} u(y), \quad u^*(x) = \limsup_{y \to x}u(y).
\]

\subsection{Characterization of solutions of the Dyson equation}
As in part I, we are going to going to say that $m$ is a solution of \eqref{dyson} if $u(t,x) := m_t((-\infty,x])$ is a viscosity solution of the primitive PDE
\be\label{dysonint}
\partial_t u + \partial_x u A_0[u] = 0 \text{ in } (0,\infty)\times \R,
\ee
which is simply obtained by integrating \eqref{dyson} with respect to $x$. Naturally, the previous equation is associated with boundary conditions 
\[
\forall t \geq 0, \quad u(t,-\infty) = 0, \quad u(t,\infty) = 1,
\]
and the initial condition $u_0 := m_0((-\infty,x])$, which is thus non-decreasing.
We showed in part I that, in this context, this equation is well suited for the theory of viscosity solutions since it has a comparison principle. Moreover, this comparison principle does not rely on the exact form of $A_0$ (hence of $H[\cdot]$), but rather on the fact that it is an elliptic operator. Hence, it extends naturally to more general equations, namely PDE of the form
\be\label{dyson:gen}
\partial_t u + \partial_x u c(x)A_0[u] + b \partial_x u = 0 \text{ in } (0,\infty)\times \R,
\ee
where $c:\R\to \R$ is a non-negative continuous function and $b: \R \to \R$ is continuous. We now give a precise definition of viscosity solutions for \eqref{dyson:gen}.
\begin{Def}
An usc (resp. lsc) function $u : \R_+\times \R$ is a viscosity sub-solution of \eqref{dyson:gen} (resp. a super-solution) if for any $T > 0$, for any $\phi \in C^{1,1}_b$, $0 < \delta \leq \infty$  $(t_0,x_0) \in (0,T]\times \R$ point of maximum (resp. minimum) of $u-\phi$,
\[
\ba
\partial_t& \phi(t_0,x_0)  + \partial_x\phi(t_0,x_0)c(x_0) \left( A_{-\delta}[\phi(t_0)](x_0) + A_{\delta}[u(t_0)](x_0)\right) \\
&+ b(x_0)\partial_x \phi(t_0,x_0)\leq 0 \text{ (resp. } \geq 0).
\ea
\]
\end{Def}
\begin{Def}
A viscosity solution of \eqref{dyson:gen} is a locally bounded function such that $u^*$ is a viscosity super-solution and $u_*$ is viscosity sub-solution of \eqref{dyson:gen}.
\end{Def}
One of the main result of part I was to establish a comparison principle (Proposition 4.4 in \citep{bertucci2022spectral}) for equations of the form of \eqref{dysonint}, which was called a spectral dominance property because of the interpretation of the equation in terms of spectral measure.

Below we shall also use the notion of viscosity solutions for slightly more involved PDE and in those cases, the adequate notion of viscosity solution is the natural extension of this one.
In part I, several results were introduced on equations of the form of \eqref{dyson:gen}, namely the propagation of Lipschitz estimates as well as comparison results for viscosity solutions.

Let us remark that equations of the same form as \eqref{dyson:gen} have been considered independently of random matrices, namely because of their applications in dislocations problems. We refer the reader to \citep{imbert2008homogenization,imbert2008homogenization2,forcadel2009homogenization} for more detailed and such equations and their uses.

\subsection{Derivation of the Dyson equation}
We briefly recall the usual derivation of \eqref{dyson}, using the so-called Dyson Brownian motion.
Consider a collection $(B^i)_{i \in \mathbb{N}}$ of independent Brownian motions and a sequence $(\lambda^N)_{N \in \mathbb{N}}$ such that for all $N \geq 1, \lambda^N \in \R^N$ with $\lambda_i^N < \lambda_j^N$ as soon as $i < j$. It is well known (see e.g. \citep{chan,rogers,anderson2010introduction}) that when, in law,
\[
 \lim_{N \to \infty}N^{-1}\sum_{i =1}^N \delta_{\lambda^N_i} = m_0 \in \mathcal{P}(\R),
 \]
 then for all $t > 0$, $N^{-1}\sum_{i =1}^N \delta_{\lambda^N_{i,t}}$ converges in law toward $m_t \in \mathcal{P}(\R)$, where the processes $((\lambda^N_{i,t})_{t \geq 0})_{1\leq i \leq N}$ are the unique strong solutions of
\be\label{dysonN}
d\lambda^N_{i,t} = \frac1N\sum_{j \ne i} \frac{1}{\lambda^N_{i,t}-\lambda^N_{j,t}}dt + \frac{\sqrt{2}}{\sqrt{N}}dB^i_t
\ee
and $(m_t)_{t \geq 0} \in C(\R_+,\mathcal{P}(\R))$ is the unique solution of the Dyson equation \eqref{dyson} with initial condition $m_0$.

\section{Corrections on the first part}\label{sec:cor}
The first correction we make is on the convergence result presented in Theorem $3$ in \citep{bertucci2022spectral}. At the beginning of the proof, we considered an element $F$ given as a limit point of the sequence $(F_N)_{N \geq 2}$ through a compactness argument. If this sequence is indeed relatively-compact almost surely, the problem is that the limit point depends on $\omega \in \Omega$. The exact same argument can be made correct by simply considering instead
\[
F^*(t,x) = \limsup_{N \to \infty, x_N \to x, t_N \to t} F_N(t_N,x_N).
\]
Note that the previous object is a priori stochastic, because so is the system of SDE. The exact same computations we presented in \citep{bertucci2022spectral} then establishes that, almost surely, $F^*$ is a viscosity sub-solution of the equation. The same argument can be made to show that $F_*$, the $\liminf$ of $(F_N)_{N \geq 2}$, is almost surely a viscosity supersolution. We conclude as usual in viscosity solution theory, using the comparison result (Proposition 4.4 in \citep{bertucci2022spectral}). Indeed, it establishes that almost surely $F^* \leq F_*$, which implies of course the equality and thus the existence of a viscosity solution.\\

Corollary $2.2$ of \citep{bertucci2022spectral} is proved and detailed in Section 4.4 below.\\

The equations (4.31) and (4.32) of \citep{bertucci2022spectral} should read differently, as a negative sign should be in front of the constant $C$. In (2.43) and (4.52), it lacks the condition $ x \geq y$.

\section{A regularizing result}\label{sec:reg}
In \citep{biane1997free}, the author established strong regularizing properties of the pure Dyson equation \eqref{dyson}. Namely he showed that given an initial condition $m_0 \in \mathcal{P}(\R)$, the free convolution of $m_0$ with the semi-circular distribution, which turns out to be the unique solution of \eqref{dyson}, is bounded and $\frac 13$ H\"older continuous. He showed that for any $t > 0,\|m_t\|_\infty \leq \frac{1}{\sqrt t}$. This results relied exclusively on explicit computations involving the semi-circular distribution.

In this section, we provide another proof of the $L^\infty$ regularizing result, but which is valid for a wider class of equations, namely ones involving drifts and more general interactions.

We are here concerned with solutions of the equation
\be\label{dysong}
\partial_t m + \partial_x(mK[m]) + \partial_x (b m) = 0 \text{ in } (0,\infty)\times \R,
\ee
with initial condition $m_0 \in \mathcal{P}(\R)$, where $b : \R \to \R$ is a given Lipschitz continuous and bounded function and $K$ is a singular integral operator given by
\[
K[m] = \int_\R \frac{f(x,y)}{x-y}m(dy),
\]
for $f : \R^2 \to \R$. This equation is obtained as the natural analogue of \eqref{dyson} when the pairwise interaction in \eqref{dysonN} is given by $f(\lambda_i,\lambda_j)/(\lambda_i - \lambda_j)$. Such interactions can appear in random matrix theory as was shown in part I. We start by giving the main a priori estimate behind the regularizing property. 

\begin{Prop}\label{prop:regsmooth}
Consider $m$ a smooth Lipschitz solution of \eqref{dysong} such that $m_0 \in \PR$. Assume that there exists $C_0 >0$ such that
\be\label{hypf}
\inf_x f(x,x) \geq C_0^{-1}, \quad \|f\|_\infty + \|\partial_1 f \|_\infty + \|\partial_{12}f\|_\infty \leq C_0.
\ee 
Then, for any $T > 0$, there exists $C > 0$ depending only $C_0$, $b$ and $T$ such that 
\be\label{eq:reg}
\forall 0 < t \leq T, \quad m(t,x) \leq \frac{C}{\sqrt{t}} .
\ee
\end{Prop}
\begin{proof}
For any $t > 0$, consider a point $\bar x$ of maximum of $m(t,\cdot)$. From the regularity of $m$, such a point exists. Evaluating \eqref{dysong} at this point, we obtain that
\[
\partial_tm(t,\bar x) + m(t,\bar{x})\partial_x(K[m(t)])(\bar x) + \partial_x b(t,\bar x) m(t,\bar x) = 0.
\]
We now compute for a smooth function $\phi$
\be\label{expression}
\ba
\partial_x K[\phi](x) &= \partial_x\left[ \int_\R \frac{f(x,y)-f(x,x)}{x-y}\phi(y)dy + f(x,x) \int_\R \frac{\phi(y)}{x-y}dy  \right]\\
&= \int_\R \partial_x\left[ \frac{f(x,y)-f(x,x)}{x-y}\right]\phi(y)dy + \partial_x (f(x,x)) \int_\R \frac{\phi(y)}{x-y}dy\\
& \quad + f(x,x) \int_\R \frac{\phi(x) -\phi(y)}{(x-y)^2}dy.
\ea
\ee
Defining $c(x) = f(x,x)$, and $g(x,y) = (f(x,y)-f(x,x))(x-y)$, we now obtain
\[
\ba
\partial_x K[\phi](x) &= \int_\R \partial_x g(x,y) \phi(y)dy + \int_\R (c(x) - c'(x)(x-y))\frac{\phi(x) - \phi(y)}{(x-y)^2} dy.
\ea
\]
Observe that there exists $\kappa, \delta_0 > 0$ depending only on $C_0$ such that
\[
|x-y|\leq \delta_0 \Rightarrow c(x) - c'(x) (y-x) \geq \kappa,
\]
as well as $\|\partial_x g\|_\infty \leq \kappa^{-1}$. Let us now compute for $\delta \leq \delta_0$, since $\bar x$ is a point of maximum of $m$,
\[
\ba
\partial_x(K[m(t)])(\bar x) &\geq - \kappa^{-1} + \kappa \int_{\delta \leq |x-y|\leq \delta_0} \frac{m(t,\bar x) - m(t,y)}{(\bar x-y)^2} dy\\
& \quad  + \int_{ |x-y| > \delta_0}(c(\bar x)- c'(x)(x-y)  ) \frac{m(t,\bar x) - m(t,y)}{(\bar x-y)^2} dy \\
&\geq 2 \kappa (\delta^{-1} - \delta_0^{-1})m(t,\bar x) - \kappa^{-1}- \kappa \int_{\delta \leq |x-y|\leq \delta_0}\frac{m(t,y)}{(\bar x-y)^2} dy\\
& \quad - \int_{|x-y|> \delta_0} (c(\bar x)- c'(x)(x-y)  )\frac{m(t,y)}{(\bar x -y)^2} dy\\
& \geq 2 \kappa (\delta^{-1}- \delta_0^{-1})m(t,\bar x) - \kappa^{-1} - \kappa \delta^{-2} - C,
\ea
\]
where $C$ is a constant which bounds $\frac{c( x)- c'(x)(x-y) }{(x-y)^2}$ on $|x-y|> \delta_0$. Hence, coming back to the equation satisfied by $m$, we deduce that
\[
\partial_t m(t,\bar x) \leq - m(t,\bar x) (- \|\partial_x b\|_\infty - C - \kappa \delta^{-2} + 2\kappa (\delta^{-1} - \delta_0^{-1})m(t,\bar x)).
\]
Thus, choosing $\delta = (m(t,\bar x))^{-1}\land \delta_0$, we obtain that as soon as $m(t,\bar x) > \delta_0^{-1}$
\[
\partial_t m(t,\bar x) \leq - m(t,\bar x) ( \kappa m(t,\bar x)^2 - C).
\]
Hence, the claim follows.

\end{proof}
\begin{Rem}
This result is indeed a regularizing result since the constant $C$ does not depend on $m_0$.
\end{Rem}
In the previous result, $C$ does not depend on $m_0$. Hence, we would like to generalize this result to non-smooth $m_0$. We now continue in this direction. Remark that the previous proof can be immediately extended to the case of the equation
\be\label{dysongreg}
\partial_t m - \eps \partial_{xx} m+ \partial_x(mK[m]) + \partial_x (b m) = 0 \text{ in } (0,\infty)\times \R,
\ee
for $\eps > 0$. Indeed we trivially have the following.
\begin{Cor}
For $\eps > 0$, consider $m$ a smooth Lipschitz solution of \eqref{dysongreg} such that $m_0 \in \PR$. Then, under the assumptions of Theorem \ref{thm:reg}, for any $T > 0$, there exists $C > 0$ depending only $C_0$, $b$ and $T$ (but not on $\eps$) such that \eqref{eq:reg} holds.
\end{Cor}

Our strategy for proving a regularizing result for general solutions is the following. We are going to show that there always exists a solution $m_\eps$ of \eqref{dysongreg}, which satisfies the required estimate thanks to the previous Corollary. Then, by passing to the limit $\epsilon \to 0$, we are going to show that there exists a limit point of $(m_\eps)_{\eps}$ which is a solution of \eqref{dysong} which satisfies \eqref{eq:reg}. 

We start by proving an existence result of the regularized equation.
\begin{Prop}\label{prop:reg2}
Let $m_0 \in \PR$ have a smooth density $m_0$. Assume that $m_0$, $f$ and $b$ are $C^\infty$ functions with all their derivatives bounded, as well as $\inf_x f(x,x) > -\infty$.  Then, there exists a unique smooth solution $m_\eps$ of \eqref{dysongreg}. Moreover, it satisfies \eqref{eq:reg} for a constant $C$ as in Theorem \ref{thm:reg}, in particular, independent of $\eps$.
\end{Prop}
\begin{proof}
Fix a time horizon $T > 0$. We start by assuming that such a smooth function exists and we want to show an a priori estimate (depending on $\eps > 0$ obviously). We denote by $C> 0$ a constant which depends only on $m_0$ and $\epsilon$. First, since the term in $\epsilon$ does not perturb the proof of the $L^{\infty}$ a priori estimate above, there exists $C > 0$ such that $\|m\|_{\infty} \leq C$. Since $m(t,\cdot)$ is in $L^1$ for all time $t \geq 0$, we also deduce that it is in all the $L^p$, for $1 \leq p \leq \infty$. Multiplying \eqref{dysongreg} by $m$ and integrating over space and time, we then obtain
\[
\frac{d}{dt}\int_\R m^2 + \epsilon \int_0^t\int_\R (\partial_x m)^2 = \int_0^t\int_\R (\partial_x m)   (m K[m] + mb).
\]
Furthermore, by simply splitting the integral defining $K[m]$ around the singularity, we also have that for all $\delta > 0$, there exists $C_\delta > 0$ such that for all $µ$, $\|K[µ]\|_\infty \leq \delta \|µ\|_{C^{\alpha}} + C_\delta \|µ\|_{L^1}$, for $\alpha \in (0,\frac 12)$. Hence, recalling the energy estimate, we obtain
\[
\frac{d}{dt}\int_\R m^2 + \epsilon \int_0^t\int_\R (\partial_x m)^2 \leq \|m\|_{L^2(H^1)}(\kappa \|m\|_{L^2(C^\alpha)} + C_\kappa \|m_0\|_{L^1}) + C,
\]
where we have used the fact that $b$ is Lipschitz continuous.
From the usual embedding of $H^1$ into $C^\alpha$, we deduce that $\|m\|_{L^2(H^1)} \leq C$.\\

Further regularity can be obtained by looking at the equation satisfied by $w = \partial_x m$. Multiplying this equation by $w$ and integrating yields
\[
\frac{d}{dt} \int_\R w^2 + \epsilon \int_0^t \int_\R (\partial_x w)^2 = \int_0^t \int_\R \partial_x w(w K[m] + m \partial_x K[m]) - \int_0^t \int_\R \partial_{xx} b mw - \frac 32\partial_x b w^2.
\]
Using the expression of $\partial_x K [m]$ derived in \eqref{expression}, and arguing as in the previous case, we deduce that $\|w\|_{L^\infty(L^2)} + \|w\|_{L^2(H^1)} \leq C$. Further regularity can then be obtained in a similar fashion by bootstrapping techniques.\\

Once regularity is obtained, the existence of a solution of \eqref{dysongreg} with this regularity is classical and we do not detail it here. Furthermore, in this situation, the uniqueness of such a solution is immediate. Finally, the fact that it satisfies the required $L^\infty$ estimate follows simply from the previous result.
\end{proof}

Now, we would like to pass to the limit $\epsilon \to 0$ in the previous equation to consider general initial conditions and solutions of \eqref{dysong}. Recall that, in general, we say that $m$ is a solution of \eqref{dysong} if the fonction $u$ defined by $u(t,x) = \int_{-\infty}^xm_t(dy)$ is a viscosity solution of the associated equation
\be\label{eq:prim}
\begin{aligned}
\partial_t u + \partial_x u L[u] + b \partial_x u = 0 \text{ in } (0,\infty)\times \R,
u(0,x) = m_0((-\infty,x]) \text{ in } \R.
\end{aligned}
\ee
where $L$ defined on smooth function $\phi$ by
\[
L[\phi](x) = \int_\R g(x,y) \frac{\phi(x)-\phi(y)}{(x-y)^2}dy,
\]
where $g(x,y) = f(x,y) + (x-y)\partial_y f(x,y) $.
In order to use the stability of viscosity solutions, we need a comparison principle. Hence, we are going to assume that $f$ is such that a comparison principle holds, that is
\be\label{hyp:fcomp}
\forall x \in \R, y \to \frac{f(x,y)}{x-y} \text{ is non-incresaing on } (-\infty,x) \text{ and on } (x,\infty),
\ee
which resumes here to assuming that $g \geq 0$. We can now state the following.

\begin{Prop}\label{prop:approx}
Consider three sequences $(m_{0,\eps})_{\eps > 0}$, $(f_\eps)_{\eps> 0}$ and $(b_{\eps})_{\eps > 0}$, valued in respectively $\PR$, $C^1(\R^2,\R)$ and $C^1(\R,\R)$. Denote $g_\eps(x,y) = f_\eps(x,y) + (x-y)\partial_y f_\eps(x,y)$. Assume that
\begin{itemize}
\item for all $\epsilon > 0$, $m_{0,\eps}$, $f_\eps$ and $b_\eps$ are $C^\infty$ with all derivatives bounded.
\item $(m_{0,\eps})_{\eps}$ and $(b_\eps)_{\eps}$ converge locally uniformly toward some $m_0$ and $b$.
\item For all $\eps > 0$, $g_\eps \geq 0$.
\item $(g_\eps)_{\eps > 0}$ and $(f_\eps)_{\eps > 0}$ converge uniformly toward $g$ and $f$.
\item the assumptions of Theorem \ref{thm:reg} are satisfied uniformly in $\epsilon$.
\end{itemize}
Consider a sequence $(u_\eps)_{\eps}$ of viscosity sub-solution (resp. super-solution) of 
\[
\begin{aligned}
\partial_t u_\eps -\eps \partial_xx u_\eps + \partial_x u_\eps L_\eps[u_\eps] + b_\eps \partial_x u_\eps = 0 \text{ in } (0,\infty)\times \R,
u_\eps(0,x) = m_{0,\eps}((-\infty,x]) \text{ in } \R,
\end{aligned}
\]
where $L_\eps$ is defined as $L$ when $f$ is replaced by $f_\eps$. Then, if $(u_\eps)_{\eps > 0}$ is uniformly bounded from above (resp. from below), the function $\bar u$ defined by
\[
\bar u(t,x) = \limsup_{\eps \to 0,s \to t, y \to x} u_\eps(s,y) \text{ ( resp. }\underbar u = \liminf\text{)}
\]
 is a viscosity sub-solution (resp. super-solution) of \eqref{eq:prim}.
\end{Prop}

\begin{proof}
Consider $\phi$ a smooth function, $T>0$ and $(t_0,x_0) \in (0,T]\times \R$ a point of maximum of $\bar u - \phi$ on $[0,T]\times \R$. Without loss of generality, we can assume that it is a point of strict maximum. Denote by $(t_\eps,x_\eps)$ a point of maximum of $u_\eps - \phi$. Note that $(t_\eps,x_\eps) \to (t_0,x_0)$ as $\eps \to 0$. Remark that, up to changing $\phi$ far from $(t_0,x_0)$, we can always assume that such a point exists. Since $u_\eps$ is a viscosity sub-solution of \eqref{dysongreg}, it follows that for all $\delta > 0$
\[
\begin{aligned}
\partial_t& \phi(t_\eps,x_\eps) - \eps \partial_{xx}\phi(t_\eps,x_\eps) + \partial_x \phi(t_\eps,x_\eps)\int_{|x_\eps - y|\leq \delta} g_\eps(x_\eps,y)\frac{\phi(t_\eps,x_\eps) - \phi(t_\eps,y)}{(x_\eps-y)^2} dy\\
 &+  \partial_x \phi(t_\eps,x_\eps)\int_{|x_\eps - y|> \delta} g_\eps(x_\eps,y)\frac{u_\eps(t_\eps,x_\eps) - u_\eps(t_\eps,y)}{(x_\eps-y)^2} dy \leq 0.
 \end{aligned}
\]
Remark that $\partial_x \phi(t_\eps,x_\eps) \geq 0$ and recall that $g_\eps \geq 0$. Then, using Fatou's Lemma, we can pass to the limit in the last integral and we obtain the required viscosity solution formulation. Thus $\bar u$ is indeed a viscosity sub-solution. A similar argument holds in the case of viscosity super-solution.
\end{proof}

We are now ready to prove the main result of this section.
\begin{Theorem}\label{thm:reg}
Assume that \eqref{hypf} holds for some $C_0 > 0$ and that \eqref{hyp:fcomp} holds. Then, for any $T >0$, there exists $C > 0$ depending only $C_0$, $b$ and $T$ such that the unique solution $m$ of \eqref{dysong} is bounded for any $t > 0$ and satisfies for all $t\in (0,T]$
\be\label{eq:reg}
m(t,x) \leq \frac{C}{\sqrt{t}} .
\ee
\end{Theorem}
\begin{proof}
Assume first that $m_0$ is smooth. Consider three sequences $(m_{0,\epsilon})_{\eps > 0}$, $(f_\eps)_{\eps > 0}$ and $(b_\eps)_{\eps> 0}$ satisfying the assumptions of Proposition \ref{prop:approx} and $u_\eps$ the associated solution of \eqref{dysongreg}. Since $m_0$ is smooth, we deduce that for all $x \in \R$,
\[
 \limsup_{\eps \to 0,s \to 0, y \to x} u_\eps(s,y) =  \liminf_{\eps \to 0,s \to 0, y \to x} u_\eps(s,y).
\]
Hence, we deduce from Proposition \ref{prop:approx} and the comparison principle in part I that $u_{\epsilon}$ converges almost everywhere pointwise toward a function $u$. From Proposition \ref{prop:reg2}, we obtain that, since $(u_\eps)_{\eps > 0}$ is such that $(\partial_x u_\eps)_{\eps > 0}$ satisfies uniformly \eqref{eq:reg}, then so does $\partial_x u$. Hence, the result is proven for smooth initial condition $m_0$. 

Consider now a general $m_0$. Arguing as in part I, we now obtain that the fact that there exists a viscosity solution of \eqref{eq:prim} which can be uniformly approximated from above or from below by functions satisfying the required Lipschitz estimate. Hence the result is also valid for general initial conditions.
\end{proof}

Note that we have here used the positivity of the function $g$ in order to prove the regularizing effect for general solutions whereas it was not a requirement of the proof of Proposition \ref{prop:regsmooth}. We develop the case of more generals $g$ in the next Section.

\section{Other extensions of the Dyson equation}\label{sec:extension}
In this section, we discuss various natural extensions of \eqref{eq:prim}. The first one is a case in which the comparison principle does not hold anymore. The second one is the case in which the diagonal term in the non-local interaction (the term $c$) vanishes. The third one is the case in which the drift term $b$ is more singular than Lipschitz continuous.

\subsection{A stability result for non-parabolic equations}
We consider here equations of the form
\be\label{eq143}
\partial_t u + \partial_x u c(x) A_0[u] + B(x;u)\partial_x u = 0 \text{ in } (0,\infty)\times \R,
\ee
where  $c : \R \to (0,\infty)$ is a smooth function such that $\inf_x c(x) > 0$ and $B$ is given by
\[
B(x;u) = \int_\R \beta(x,y)u(y)dy,
\]
for a smooth $\beta : \R^2 \to \R$ which is not necessary non-negative. Recall that this type of equation models large interacting systems, similar to \eqref{dysonN}, in which the pairwise interaction is given by $\frac{f(\lambda_i,\lambda_j)}{\lambda_i -\lambda_j}$. In such a case, we obtain that $c(x) = f(x,x)$ and 
\[
\beta(x,y) = \frac{f(x,y) + (x-y)\partial_y f(x,y) - f(x,x)}{(x-y)^2},
\]
which is smooth as soon as $f$ is smooth. In \citep{bertucci2022spectral}, we explained in details why the positivity of $\beta$ implies that \eqref{eq143} has a comparison principle. We now investigate in details the case in which such a comparison does not hold. We make the following assumptions on $\beta$. The requirements are stated with the convention that $x$ is the first argument of $\beta$ and $y$ its second. Furthermore, the operator $A_0$ is always used on the $x$ first argument. We assume that $\beta$ is smooth and that
\be\label{Bdef}
\beta, \partial_x \beta, \partial_xx \beta \in L^\infty_x(L^1_y).
\ee
\be\label{Bcomp}
A_0 \beta, A_0 \partial_x \beta, A_0\partial_{xx}\beta \in L^1_{x,y}.
\ee
The first condition \eqref{Bdef} ensures that $B(\cdot;u)$, $\partial_x B(\cdot;u)$ and $\partial_x B(\cdot;u)$ are bounded functions as soon as $u \in L^\infty$. The second condition implies that all those functions have a bounded $\dot H^\frac12$ semi-norm depending only on $\|u\|_\infty$. This last fact simply follows from \eqref{12A0}.

We prove the following result which yields uniqueness and stability in $L^\infty$ thanks to a Gronwall like estimate.

\begin{Theorem}\label{thm:gronwallvisc}
Assume that \eqref{Bdef} holds. Consider $T>0$, $u_1$ and $u_2$ two viscosity solutions of 
\[
\partial_t u + c(x)\partial_xu A_0[u] + B(x; u)\partial_x u = 0 \text{ in } (0,T)\times \R,
\]
such that $\partial_x u_1,\partial_x u_2 \in L^\infty$ and such that $u_1$ and $u_2$ are both continuous, bounded and satisfy $u_1(0,x) = u_2(0,x)$ for all $x \in \R$. Then $u_1 = u_2$.
\end{Theorem}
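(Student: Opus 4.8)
The plan is to run a Gr\"onwall argument on a suitable "distance" between $u_1$ and $u_2$, exploiting the Lipschitz regularity in $x$ together with the ellipticity of $A_0$ to absorb the singular term. Since the viscosity solutions here are, by the previous two propositions, actually $C^\eta$ in time and Lipschitz in space (and one may further regularize in the standard way), I would first reduce to the case where $u_1,u_2$ are smooth enough that the PDE holds in a classical or at least almost-everywhere sense; this is where the continuity and the uniform Lipschitz bound $\partial_x u_i \in L^\infty$ are used. The natural quantity to differentiate is not $\|u_1-u_2\|_\infty$ (the singular operator $A_0$ does not play well with the sup norm here because of the transport term $c\,\partial_x u\, A_0[u]$), but rather the $\dot H^{\frac12}$-type quantity $\frac12\int_\R (u_1-u_2)A_0[u_1-u_2]\,dx$, i.e. $\frac12\|w\|_{\dot H^{1/2}}^2$ with $w := u_1-u_2$; note $w$ is bounded and, being a difference of Lipschitz BV-type functions, lies in $\dot H^{1/2}$.

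The key computation is to subtract the two equations, multiply by $A_0[w]$, and integrate in $x$. Writing $c\partial_x u_1 A_0[u_1] - c\partial_x u_2 A_0[u_2] = c\,\partial_x w\, A_0[u_1] + c\,\partial_x u_2\, A_0[w]$ and similarly splitting the $B$-term, one gets
\[
\ba
\frac{d}{dt}\tfrac12\|w\|_{\dot H^{1/2}}^2 = &-\int_\R c(x)\,\partial_x w\, A_0[u_1]\,A_0[w]\,dx - \int_\R c(x)\,\partial_x u_2\,(A_0[w])^2\,dx\\
&-\int_\R B(x;u_1)\,\partial_x w\, A_0[w]\,dx - \int_\R \big(B(x;u_1)-B(x;u_2)\big)\,\partial_x u_2\, A_0[w]\,dx.
\ea
\]
The second term has a favourable sign ($c\ge 0$, $\partial_x u_2 \ge 0$) and is discarded. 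The remaining three terms are estimated using \eqref{eq:254}, the Lipschitz bounds $\|\partial_x u_i\|_\infty \le C$, the boundedness of $c$ and of $B(\cdot;u_i)$ coming from \eqref{Bdef}, Lemma \ref{lemma:alg} to control products in $\dot H^{1/2}$, and — crucially — the estimate $\|B(x;u_1)-B(x;u_2)\|$ in the relevant norm controlled by $\|w\|_{\dot H^{1/2}}$ (or $\|w\|_2$), again via \eqref{Bdef} and interpolation, since $B(\cdot;u_1)-B(\cdot;u_2) = B(\cdot;w)$ is linear in its measure-argument. Each term is then bounded by $C\|w\|_{\dot H^{1/2}}^2$ (the $A_0[w]$ factors are handled by Cauchy--Schwarz in the $|\xi|^{1/2}$-weighted space, turning $\int \partial_x w\, A_0[w]$-type expressions into $\|w\|_{\dot H^{1/2}}^2$ up to constants depending on the Lipschitz norms). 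This yields $\frac{d}{dt}\|w(t)\|_{\dot H^{1/2}}^2 \le C\|w(t)\|_{\dot H^{1/2}}^2$, and since $w(0)=0$, Gr\"onwall gives $w\equiv 0$.

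The main obstacle I anticipate is making the computation above rigorous at the level of viscosity solutions rather than classical ones: one cannot simply "multiply by $A_0[w]$" for a merely Lipschitz viscosity solution. The clean route is to invoke Propositions \ref{prop:estlip} and \ref{prop:aubinlions} to get enough a priori compactness/regularity, combined with a doubling-of-variables or sup-convolution regularization that is compatible with the nonlocal operator $A_0$ (as in the comparison arguments of part I) so that the formal energy estimate can be justified in an approximate form with vanishing error. A secondary technical point is the treatment of the first term $\int c\,\partial_x w\, A_0[u_1]\,A_0[w]$: here one wants to integrate by parts to move a derivative and recognize $\int \partial_x w\, A_0[w] = \|w\|_{\dot H^{1/2}}^2 \cdot(\text{const})$ after using that $c A_0[u_1]$ is Lipschitz — its boundedness again following from \eqref{Bdef}-type control on $u_1$ and the smoothness of $c$ — so that no uncontrolled commutator with $A_0$ appears.
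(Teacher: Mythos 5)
Your proposal takes a genuinely different route from the paper: an $\dot H^{1/2}$ energy estimate in place of the paper's $L^\infty$ doubling-of-variables argument. Unfortunately the energy route, as you have set it up, has a real gap. The paper instead works directly at the viscosity level. It introduces $M(t)=\|u_1(t)-u_2(t)\|_\infty$ and the penalized test function
$u_1(t,x)-u_2(s,y)-\tfrac{1}{2\epsilon}\bigl((x-y)^2+(t-s)^2\bigr)-r\psi(x)-\gamma(t+s)-C_0\int_0^t M$,
uses the sub/super-solution inequalities at the quadruple max, and crucially exploits the Lipschitz bound to get $|x_*-y_*|\le K\epsilon$; all the singular contributions are then of size $O(\delta\epsilon^{-1})+O(r)$ (which vanish for $\delta=\epsilon^2$, $r\to0$) while the $B$-terms produce $C_1 M(t_*)$, giving $M(t)\le C_0\int_0^t M$ and Gr\"onwall. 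This handles non-smooth viscosity solutions directly without any regularization step.

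Two concrete problems with your version. First, the cross term
$\displaystyle\int c\,\partial_x w\, A_0[u_1]\,A_0[w]\,dx$
is \emph{not} of order $\|w\|_{\dot H^{1/2}}^2$. In Fourier, $\partial_x w$ and $A_0[w]$ each carry a full derivative, so with a merely bounded weight $c\,A_0[u_1]$ this term is naturally of order $\|w\|_{\dot H^1}^2$ (or at best an interpolation $\|w\|_{\dot H^{1/2}}\|w\|_{\dot H^{3/2}}$ via the commutator $[c\,A_0[u_1],H]$). When $cA_0[u_1]$ is constant the term vanishes by anti-self-adjointness of $H$, but generically it does not reduce to $\int\partial_x w\,A_0[w]$ ``up to constants''; there is an uncontrolled commutator. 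Moreover the ``good'' term $-\int c\,\partial_x u_2\,(A_0[w])^2$ cannot absorb it: $\partial_x u_2\ge0$ but is allowed to vanish, so you have no coercive $\dot H^1$ control. (You discarded this good term, so you certainly cannot use it; but even if you kept it, degeneracy of $\partial_x u_2$ kills it.) Second, the opening reduction ``to the case where $u_1,u_2$ are smooth enough'' is not available: the theorem's hypotheses give only continuity and a spatial Lipschitz bound, and Propositions \ref{prop:estlip}--\ref{prop:aubinlions} are a priori estimates on \emph{given} smooth solutions, not a statement that an arbitrary Lipschitz viscosity solution is smooth. Regularizing $u_1$ and $u_2$ separately destroys the PDE and introduces commutator errors that you would still need to control -- which brings you back to the same difficulty. (There is also the secondary question of whether $w=u_1-u_2$ lies in $\dot H^{1/2}$ at all: $w$ is bounded and Lipschitz with $w\to 0$ at $\pm\infty$, but with no quantitative decay this membership is not automatic.)

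In short, your intuition that the estimate should close by Gr\"onwall is right, but the right norm here is $L^\infty$, not $\dot H^{1/2}$: at a maximum of $u_1-u_2$ the transport term $c\,\partial_x w\,A_0[u_1]$ is killed by $\partial_x w=0$ (respectively by $|x_*-y_*|\le K\epsilon$ in the doubled variables), and the remaining singular contribution has a sign. That is exactly what the paper's doubling-of-variables argument formalizes at the viscosity level, using the Lipschitz regularity precisely to control $|x_*-y_*|/\epsilon$.
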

\begin{proof}
Let us first remark that \eqref{Bdef} is clearly sufficient to show that there exists $C>0$ such that for $v,w \in L^\infty$, 
\[
\|B(\cdot;v) - B(\cdot;w)\|_\infty \leq C \|v-w\|_\infty.
\]
 We denote by $M(t) := \|u_2(t) -u_1(t)\|_\infty$ and, for $\epsilon,r > 0$, we consider 
\[
M_{\epsilon,r}(s,t,x,y) := u_1(t,x) - u_2(s,y) - \frac{1}{2\epsilon}((x-y)^2 + (t-s)^2) - r\psi(x),
\]
where $\psi(x) = (x^2+1)^\frac13$. We want to show that $M(t) = 0$ for all $t \geq 0$. In order to do so, we are going to prove that $M(t) \leq C\int_0^tM(s)ds$ for some $C > 0$, which is sufficient to establish the claim since $M(0) = 0$. Recall that from the time regularity of $u_1$ and $u_2$, $M(\cdot)$ is continuous. Assume that the previous inequality does not hold, hence, up to exchanging $u_1$ and $u_2$, there exists $\kappa,\gamma,T > 0 > 0$ such that for any $\epsilon,r > 0$ sufficiently small, 
\[
\sup\left\{ M_{\epsilon,r} - \gamma(t+s) - C_0 \int_0^tM(s)ds | x,y \in \R, t,s \in [0,T]\right\} \geq \kappa,
\]
where $C_0$ is a constant to be chosen later on. Since $u_1$ and $u_2$ are bounded, there exists a point of maximum $(t_*,s_*,x_*,y_*)$ of $M_{\epsilon,r}(t,s,x,y) - \gamma (t+s)-C_0\int_0^tM(s)ds$. We are going to assume that $s_*,t_* > 0$. The case in which one of the two is $0$ can be treated by standard arguments quite easily.

Since $u_1$ is a viscosity solution of \eqref{eq143}, it is in particular a viscosity sub-solution of this equation. Hence, we obtain that for any $\delta > 0$
\[
\ba
c(x_*)\left(\frac{x_*-y_*}{\epsilon} + r\psi'(x_*)\right)\left(\int_{\{|z|\leq \delta\}}\frac{\varphi_1(x_*) -\varphi_1(x_*+z)}{z^2}dz + \int_{\{|z | > \delta\}} \frac{u_1(t_*,x_*) -u_1(t_*,x_* + z)}{z^2}  dz\right)\\
+ \frac{t_*-s_*}{\epsilon} + \gamma + C_0 M(t_*)+ B(x_*;u_1(t_*))\frac{x_*-y_*}{\epsilon} \leq 0,
\ea 
\]
where $\varphi_1(x) = \frac{1}{2\epsilon}(x-y_*)^2 + r \psi(x)$. Similarly, since $u_2$ is a viscosity super-solution of \eqref{eq143}, we also deduce that
\[
\ba
c(y_*)\frac{x_*-y_*}{\epsilon} \left(\int_{\{|z|\leq \delta\}}\frac{\varphi_2(y_*) -\varphi_2(y_*+z)}{z^2}dz + \int_{\{|z | > \delta\}} \frac{u_2(s_*,y_*) -u_2(s_*,y_* + z)}{z^2}  dz\right)\\
+ \frac{t_*-s_*}{\epsilon} - \gamma + B(y_*;u_2(s_*))\frac{x_*-y_*}{\epsilon} \geq 0,
\ea 
\]
where $\varphi_2(y) = -\frac{1}{2\epsilon}(x_*-y)^2$. Combining the two relations yields
\[
\ba
2& \gamma + c(y_*)\frac{x_*-y_*}{\epsilon}\bigg(\int_{\{|z|\leq \delta\}}\frac{\varphi_1(x_*) - \varphi_2(y_*)-\varphi_1(x_*+z) + \varphi_2(y_*+z)}{z^2}dz\\
&\quad \quad \quad + \int_{\{|z | > \delta\}} \frac{u_1(t_*,x_*) -u_2(s_*,y_*)-u_1(t_*,x_* + z)+u_2(s_*,y_*+z)}{z^2}  dz\bigg)\\
&+\left(r \psi'(x_*)c(x_*) + \frac{(c(x_*)-c(y_*))(x_*-y_*)}{\epsilon}\right)\bigg(\int_{\{|z|\leq \delta\}}\frac{\varphi_1(x_*) -\varphi_1(x_*+z)}{z^2}dz\\
& \quad \quad \quad \quad \quad\quad \quad \quad \quad \quad \quad \quad\quad \quad \quad \quad \quad \quad \quad \quad+ \int_{\{|z | > \delta\}} \frac{u_1(t_*,x_*) -u_1(t_*,x_* + z)}{z^2}  dz\bigg)\\
&+ \frac{x_*-y_*}{\epsilon}(B(x_*; u_1(t_*))-B(y_* ; u_2(s_*)))  + C_0M(t_*)\leq 0
\ea
\]
Using the definition of $\varphi_1$ and $\varphi_2$, we obtain that
\[
\int_{\{|z|\leq \delta\}}\frac{\varphi_1(x_*) - \varphi_2(y_*)-\varphi_1(x_*+z) + \varphi_2(y_*+z)}{z^2}dz \leq C\delta(r + \epsilon^{-1}).
\]
From the definition of $(t_*,s_*,x_*,y_*)$ we deduce that
\[
\ba
 \int_{\{|z | > \delta\}} \frac{u_1(t_*,x_*) -u_2(s_*,y_*)-u_1(t_*,x_* + z)+u_2(s_*,y_*+z)}{z^2}  dz &\leq  r\int_{\{|z | > \delta\}} \frac{\psi(x_*) - \psi(x_*+z)}{z^2}  dz\\
 &\leq r(1 + \delta^{-1}).
 \ea
\]
From the same argument, it also holds true that
\[
\int_{\{|z|\leq \delta\}}\frac{\varphi_1(x_*) -\varphi_1(x_*+z)}{z^2}dz + \int_{\{|z | > \delta\}} \frac{u_1(t_*,x_*) -u_1(t_*,x_* + z)}{z^2}  dz \leq C\delta(r + \epsilon^{-1}) + C \delta^{-1}
\]
Finally, thanks to the regularity of $B$,
\[
|B(x_*;u_1(t_*))-B(y_*; u_2(s_*))| \leq C_1( |x_*-y_*|+ \|u_1(t_*) - u_2(s_*)\|_\infty).
\]
Using the four previous estimate, we deduce that
\[
\ba
2 \gamma + C_0M(t_*)\leq  &C\left(\frac{|x_*-y_*|}{\epsilon}+ r + \omega(\epsilon)\right)(\delta(r + \epsilon^{-1}) +  r(1 + \delta^{-1}))\\
& +C_1( |x_*-y_*| + \|u_1(t_*) - u_2(s_*)\|_\infty )\left|\frac{x_*-y_*}{\epsilon}\right|.
\ea
\]
As usual in this kind of arguments, we always have $(t_*-s_*)^2 + (x_*-y_*)^2\leq \omega(\epsilon) \epsilon$, where $\omega(\epsilon) \to 0$ as $\epsilon \to 0$. Let us take $K>0$ such that $u_1$ is, uniformly in $t$, $K$ Lipschitz in $x$. For such a $K$, $|x_*-y_*| \leq K\epsilon$. Moreover, using the time regularity of $u_1$, we can compute
\[
\|u_1(t_*) - u_2(s_*)\|_\infty \leq M(t_*) + \|u_2(t_*)-u_2(s_*)\|_\infty \leq M(t_*) +\tilde{\omega}(|t_*-s_*|),
\]
for a modulus of continuity $\omega$, which depends only on $u_2$.  Hence, using those relations, we arrive at
\[
 2 \gamma + C_0M(t_*) \leq C(K + \omega(\epsilon)+ r)(r( 1 + \delta^{-1}) + \delta \epsilon^{-1}) + C_1( K \epsilon+ M(t_*) + C\tilde\omega(\sqrt{\epsilon}))
\]
Hence, taking $C_0 > C_1$ and simplifying by $C_1M(t_*)$, we arrive at a contradiction by taking first the limit $r \to 0$, and then setting $\delta = \epsilon^2$ and taking the limit $\epsilon \to 0$. The previous contradiction implies that $M(t) \leq C_0 \int_0^t M(s)ds$, from which the result follows.
\end{proof}

The previous proof makes apparent a need for the continuity in time of the solution which we are not able to overcome. In the next section, we explain how can such regularity can be obtained.

\subsection{Time regularity of solutions}

In this situation, to overcome the lack of parabolicity of the equation, we have to use some regularity. Regularity with respect to the $x$ variable holds thanks to Proposition \ref{prop:regsmooth}. We explain why we can also provide an estimate on the time derivative, namely by bounding all the other terms in \eqref{eq143}.

Let us recall the usual $\dot H^{\frac{1}{2}}(\R)$ Sobolev semi norm defined by
\be\label{12A0}
\|u\|^2_{\dot H^\frac12} := \int_\R u(x)A_0[u](x)dx.
\ee
The Fourier representation of this semi-norm is
\[
\|u\|^2_{\dot H^\frac12} = \int_\R |\xi| |\hat u(\xi)|^2d\xi.
\]
We have for any $u,v \in H^\frac12$
\be\label{eq:254}
\ba
\int_\R u\partial_xv \leq \|u\|_{\dot H^\frac12}\|v\|_{\dot H^{\frac 12}}, \quad \int_\R u A_0[v] \leq \|u\|_{\dot H^\frac12}\|v\|_{\dot H^{\frac 12}}.
\ea
\ee
We shall use the following (standard) Lemma several times.
\begin{Lemma}\label{lemma:alg}
Let $u$ and $v$ be a bounded functions such that $  \|u\|_{\dot H^\frac12}+\|v\|_{\dot H^\frac12} < \infty$. Then 
\[
\|uv\|^2_{\dot H^\frac12}\leq 2(\|u\|^2_\infty\|v\|^2_{\dot H^\frac12}+ \|u\|^2_{\dot H^\frac12}\|v\|^2_\infty),
\]
where $C$ depends on $v$.
\end{Lemma}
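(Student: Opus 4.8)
The plan is to prove the product estimate
\[
\|uv\|^2_{\dot H^\frac12}\leq 2\bigl(\|u\|^2_\infty\|v\|^2_{\dot H^\frac12}+ \|u\|^2_{\dot H^\frac12}\|v\|^2_\infty\bigr)
\]
by working directly with the Gagliardo-type double-integral representation of the $\dot H^{1/2}$ seminorm. Recall that, up to a fixed positive constant which we absorb (or equivalently using the normalization in \eqref{12A0}), one has
\[
\|w\|^2_{\dot H^\frac12} \;\asymp\; \iint_{\R^2} \frac{\bigl(w(x)-w(y)\bigr)^2}{(x-y)^2}\,dx\,dy .
\]
I would apply this with $w = uv$ and use the elementary pointwise splitting
\[
u(x)v(x) - u(y)v(y) \;=\; u(x)\bigl(v(x)-v(y)\bigr) \;+\; v(y)\bigl(u(x)-u(y)\bigr).
\]
Squaring and using $(a+b)^2 \leq 2a^2 + 2b^2$ gives
\[
\bigl(u(x)v(x)-u(y)v(y)\bigr)^2 \;\leq\; 2\,u(x)^2\bigl(v(x)-v(y)\bigr)^2 \;+\; 2\,v(y)^2\bigl(u(x)-u(y)\bigr)^2 .
\]
Dividing by $(x-y)^2$ and integrating over $\R^2$, then bounding $u(x)^2 \leq \|u\|_\infty^2$ in the first term and $v(y)^2 \leq \|v\|_\infty^2$ in the second (and using symmetry of the second integral in $x$ and $y$), one recognizes exactly $2\|u\|_\infty^2\|v\|^2_{\dot H^\frac12} + 2\|v\|_\infty^2\|u\|^2_{\dot H^\frac12}$, which is the claim.

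A few routine points need to be checked. First, one must ensure all integrals are finite, which is guaranteed by the hypotheses $u,v \in L^\infty$ and $\|u\|_{\dot H^\frac12}, \|v\|_{\dot H^\frac12} < \infty$, so that $uv \in L^\infty$ and the right-hand side is finite; the splitting inequality then shows $uv \in \dot H^{1/2}$ a posteriori. Second, one should be slightly careful about the constant relating the Gagliardo double integral to $\int u A_0[u]$: with the normalization $A_0[u](x) = \int \frac{u(x)-u(y)}{(x-y)^2}dy$ used in the paper, a symmetrization gives $\int u A_0[u] = \tfrac12 \iint \frac{(u(x)-u(y))^2}{(x-y)^2}dxdy$ exactly, so the constant $2$ in the statement is consistent and no extra factor appears; I would verify this normalization explicitly at the start so the final constant matches. (The trailing remark ``where $C$ depends on $v$'' in the statement appears to be a vestigial typo, as no constant $C$ occurs; the bound is fully explicit.)

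The main obstacle is essentially cosmetic rather than conceptual: there is no hard analytic step, since the whole argument is the one-line algebraic splitting plus Fubini. The only thing requiring attention is bookkeeping of the symmetrization constants so that the stated factor $2$ is reproduced exactly, and justifying the interchange of the order of integration, which is legitimate because after the splitting each integrand is nonnegative (Tonelli applies). An alternative route would be via the Fourier/Littlewood--Paley characterization and a paraproduct decomposition, but that is heavier machinery for what is a completely elementary estimate, so I would not pursue it.
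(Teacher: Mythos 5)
Your proof is correct but takes a genuinely different route from the paper. The paper works on the Fourier side, writing $\widehat{uv}=\hat u*\hat v$, splitting $\sqrt{|\xi|}\leq\sqrt{|\xi-z|}+\sqrt{|z|}$ inside the convolution, and then estimating the two resulting convolutions in $L^2$. You instead work on the physical side with the Gagliardo double-integral representation of $\dot H^{1/2}$ and the algebraic splitting $u(x)v(x)-u(y)v(y)=u(x)(v(x)-v(y))+v(y)(u(x)-u(y))$, followed by $(a+b)^2\leq 2a^2+2b^2$ and Tonelli. Your route is more elementary and arguably more robust: the passage from the Fourier convolution estimate to the stated bound requires, after Young's inequality, controlling $\|\hat v\|_{L^1}$ (resp.\ $\|\hat u\|_{L^1}$), which is \emph{not} in general dominated by $\|v\|_\infty$ (resp.\ $\|u\|_\infty$) for $v\in L^\infty\cap \dot H^{1/2}$; the double-integral argument sidesteps this entirely and produces the constant $2$ directly. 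Your normalization check is also on point: with $A_0[u](x)=\int\frac{u(x)-u(y)}{(x-y)^2}dy$, symmetrization gives $\int u\,A_0[u]=\tfrac12\iint\frac{(u(x)-u(y))^2}{(x-y)^2}\,dx\,dy$, so after the pointwise splitting the factor $\tfrac12$ cancels against the $2$ from $(a+b)^2\leq 2(a^2+b^2)$, yielding exactly the stated constant. You are also right that the trailing ``where $C$ depends on $v$'' is vestigial and should be removed from the statement.
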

\begin{proof}
It suffices to verify it with the Fourier transform.
\[
\ba
\int_\R |\xi|\left|\int_\R \hat u(\xi -z)\hat v (z)dz\right|^2d\xi &= \int_\R \left|\int_\R \hat u(\xi -z)\sqrt{|\xi|}\hat v (z)dz\right|^2d\xi\\
& \leq \int_\R \left|\int_\R \hat u(\xi -z)(\sqrt{|\xi-z|} + \sqrt{|z|})\hat v (z)dz\right|^2d\xi\\
& \leq \int_\R \left(\left|\int_\R \hat u(\xi -z)\sqrt{|\xi-z|}\hat v(z)dz\right| +\left|\int_\R\hat u(\xi-z)\hat v(z) \sqrt{|z|}dz\right|\right)^2d\xi\\
& \leq2( \| u\|^2_\infty\|u\|^2_{\dot H^\frac12} + \| u\|^2_\infty \|v\|^2_{\dot H^\frac12}).
\ea
\]

\end{proof}

We can prove our estimate on the time regularity of the solution.

\begin{Prop}\label{prop:estlip}
Assume that \eqref{Bdef} and \eqref{Bcomp} hold. Let $u:[0,\infty) \times \R$ be a smooth function solution of \eqref{eq143} such that for all $t \geq 0, u(t,\cdot)$ is non-decreasing, bounded, and $u(-\infty) = 0, u(\infty) =1$. Then, for any $T > 0$, there exists $C > 0$ depending only $T,c,B$ and $\| u(0,\cdot)\|_{C^{1,\alpha}}$ such that $|\partial_t u| \leq C$ on $[0,T]\times \R$.
\end{Prop}
\begin{proof}
\textbf{Step 1: An estimate on the singular term.} We start by multiplying \eqref{eq143} by $A_0[u]$ and by integrating over space, we then obtain
\be\label{esth12}
\ba
\frac{d}{dt}\frac 12& \int_\R u(t,x)A_0[u(t)](x)dx + \int_\R c(x)\partial_xu(t,x)(A_0[u(t)](x))^2dx\\
& = -\int_\R A_0[B(\cdot;u(t))\partial_xu(t,\cdot)](x)u(t,x)dx
\ea
\ee
Let us now remark that
\[
\ba
\int_\R A_0[B(\cdot;u(t))\partial_xu(t,\cdot)](x)u(t,x)dx = &\int_\R \partial_x\left(A_0[u(t) B(\cdot;u(t))](x) \right)u(t,x)dx\\
&- \int_\R u(t,x)A_0[\partial_x B(\cdot;u(t))u(t)](x)dx,
\ea
\]
from which we obtain
\[
\ba
2\int_\R A_0[B(\cdot;u(t))\partial_xu(t,\cdot)](x)u(t,x)dx = & \int_\R A_0[B(\cdot;u(t))\partial_xu(t,\cdot)](x)u(t,x)dx\\
&  + \int_\R \partial_x\left(A_0[u(t) B(\cdot;u(t))](x) \right)u(t,x)dx\\
&- \int_\R u(t,x)A_0[\partial_x B(\cdot;u(t))u(t)](x)dx,\\
 =\int_\R \partial_xu(t,x)&\bigg(A_0[u(t) B(\cdot;u(t))](x)  - B(x,u(t))A_0[u(t)](x)\bigg)dx\\
&- \int_\R u(t,x)A_0[\partial_x B(\cdot;u(t))u(t)](x)dx.
\ea
\]
From Lemma \ref{lemma:alg},
\[
\ba
\left| \int_\R u(t,x)A_0[\partial_x B(\cdot;u(t))u(t)](x)dx \right| &\leq \|\partial_x B u\|_{\dot H^\frac12}\|u\|_{\dot H^\frac12}\\
&\leq C(1 + \|u\|^2_{\dot H^\frac12}).
\ea
\]
Let us now compute
\[
\ba
\Gamma (x) :&= B(x)A_0[u(t)](x) - A_0[Bu](x)\\
&= \int_\R \frac{B(x)(u(x) - u(x+z)) - (B(x)u(x) - B(x+z)u(x+z)) }{z^2} dz\\
& = \int_\R \frac{(B(x+z) - B(x))u(x+z)}{z^2}dz\\
& = \partial_xB(x)\int_\R \frac{u(x+z) - u(x)}{z}dz + \int_\R \Theta(x,z)u(x+z)dz,
\ea
\]
where $\Theta$ is a smooth function, given by the Taylor expansion of $B(\cdot;u)$. In the previous, we lost the dependence of $B$ in $u$ to lighten the notation. The second term of the right side is smooth and has a finite $\dot H^{\frac 12}$ seminorm. The first one is the product of a smooth integrable function and $H[u]$. Since $H$ is a bounded operator in $H^{\frac12}$, we can deduce from the Lemma \ref{lemma:alg} that $\|\Gamma\|^2_{\dot H^\frac12} \leq C(1 + \|u\|_{\dot H^\frac12}^2)$. Hence, returning to \eqref{esth12}, we deduce that
\[
\frac{d}{dt}\frac 12\|u(t)\|^2_{\dot H^\frac12} + \int_\R c(x)\partial_xu(t,x)(A_0[u(t)](x))^2dx \leq C (1 + \|u(t)\|_{\dot H^\frac12}^2).
\]
Thus, from Gr\"onwall's Lemma, we obtain that there exists $C > 0$ such that
\[
\sup_{t \leq T}\|u(t)\|_{\dot H^\frac12} + \int_0^T\int_\R c(x)\partial_xu(t,x)(A_0[u(t)](x))^2dx dt \leq C.
\]

\textbf{Step 2: Estimate on the time derivative of the solution.} Let us remark that since for all $t\geq 0$, $u(t)$ is non-decreasing and bounded, we deduce that $\partial_x u$ in bounded in $L^\infty_t(L^1_x)$. Hence, since $B$ is uniformly bounded, we also deduce that $(t,x)\to B(x;u(t))\partial_xu(t,x)$ is bounded in $L^{\infty}_t(L^1_x)$. On the other hand, we have that for all $t \geq 0, x \in \R$
\[
\left| c(x) \partial_x u(t,x) A_0[u(t)](x)\right| = \sqrt{\partial_xu(t,x)}\sqrt{c(x)\partial_xu(t,x)}|A_0[u(t)](x)|.
\]
Remark now that H\"older's inequality yields that $(t,x) \to c(x) \partial_x u(t,x) A_0[u(t)](x)$ is bounded in $L^2_t(L_x^1)$. Recalling the bound on $B \partial_x u$, we deduce from the equation \eqref{eq143} that $\partial_t u$ is also bounded in $L^2_t(L^1_x)$. Applying $\partial_t$ to \eqref{eq143} and introducing $v = \partial_t u$, we find
\[
\partial_t v + c(x) \partial_x u A_0[v(t)](x) + (c(x)A_0[u(t)] + B(x;u(t)))\partial_x v(t,x) = -B(x,v(t))\partial_xu(t,x) \text{ in } (0,\infty)\times \R.
\]
Hence, from the maximum principle (recall that $A_0$ preserves comparison results and that both $c$ and $\partial_x u$ are non-negative), we obtain that
\[
\frac{d}{dt}\|v(t)\|_\infty \leq \|\partial_x u(t,x)\|_\infty \left|\int_R\beta(x,z)v(t,z)dz\right|.
\]
Then, since $v$ is bounded in $L^2_t(L^1_x)$, it follows that $(t,x) \to B(x,v(t))$ is bounded in $L^2_t(L^\infty_x)$. Hence we obtain from Gr\"owall's Lemma that
\[
\|v(t)\|_\infty \leq C + M(t) \|\partial_x u(t,x)\|_\infty,
\]
where $M$ is a bounded function of $L^2((0,\infty),\R)$. Note that $C$ here depends on $\|v(0)\|_\infty$ which depends on $\|c\partial_x u_0A_0[u_0]\|_\infty$, which can be bounded by $\|u_0\|_{C^{1,\alpha}}$. The result then follows thanks to Proposition \ref{prop:regsmooth}.
%
%

\end{proof}
We then obtain easily the
\begin{Prop}\label{prop:aubinlions}
Under the assumptions of the previous result, there exists $C>0$ which depends only on $T,c,B$, $\| u(0,\cdot)\|_{C^{1,\alpha}}$ and $\eta \in (0,1)$ such that the norm of $u$ in $C^\eta([0,T],C^\eta(\R))$ is bounded by $C$.
\end{Prop}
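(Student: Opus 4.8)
The statement is a soft consequence of the bounds already produced in the course of proving Proposition~\ref{prop:estlip}, repackaged through a classical Aubin--Lions--Simon type interpolation. Here is the plan.

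First I would extract from the proof of Proposition~\ref{prop:estlip} slightly more than its displayed conclusion. The maximum-principle and Gr\"onwall argument of Step~3 there is uniform in time, so it yields a constant $C_1 = C_1(T,c,B,\|u_0\|_{C^{1,\alpha}})$ with $\sup_{t\in[0,T]}\|\partial_x u(t,\cdot)\|_\infty \le C_1$, not merely the bound at $t=T$. Feeding this back into the inequality obtained in Step~2, $\|\partial_t u(t,\cdot)\|_\infty \le C + M(t)\,\|\partial_x u(t,\cdot)\|_\infty$ with $M\in L^2(0,T)\cap L^\infty(0,T)$, one gets likewise $\sup_{t\in[0,T]}\|\partial_t u(t,\cdot)\|_\infty \le C_2$ with the same dependence; here the needed finiteness of $\|\partial_t u(0,\cdot)\|_\infty = \|\partial_x u_0\,(c\,A_0[u_0] + B(\cdot;u_0))\|_\infty$ follows from $\partial_x u_0\in C^\alpha$, the bound on $A_0[u_0]$ (a $C^{1,\alpha}$ CDF has $A_0[u_0]\in L^\infty$ by the Taylor cancellation at the diagonal together with $|u_0|\le 1$ at infinity), and \eqref{Bdef} for $B(\cdot;u_0)$. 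Since $0\le u\le 1$, this shows that $u$ is globally Lipschitz on $[0,T]\times\R$ with a constant depending only on the listed data.

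Second I would interpolate. Fix $\eta\in(0,1)$. From $0\le u\le 1$ and $\|\partial_x u\|_\infty\le C_1$ the elementary bound $|u(t,x)-u(t,y)|\le\min(1,C_1|x-y|)\le (C_1|x-y|)^\eta$ gives $\sup_{t\le T}\|u(t,\cdot)\|_{C^\eta(\R)}\le C$. For the time increments, $u(t,\cdot)-u(s,\cdot)=\int_s^t\partial_\tau u(\tau,\cdot)\,d\tau$ is bounded by $C_2|t-s|$ in the sup-norm and has spatial derivative bounded by $2C_1$, so the same interpolation applied to this difference yields $[u(t,\cdot)-u(s,\cdot)]_{C^\eta(\R)}\le C\,\|u(t,\cdot)-u(s,\cdot)\|_\infty^{1-\eta}\le C\,|t-s|^{1-\eta}$. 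Hence $\|u(t,\cdot)-u(s,\cdot)\|_{C^\eta(\R)}\le C|t-s|^{1-\eta}$, i.e. $u\in C^{1-\eta}([0,T];C^\eta(\R))$ with the announced dependence of $C$, and, using $C^{\alpha}([0,T];C^{\alpha}(\R))\hookrightarrow C^{\beta}([0,T];C^{\beta}(\R))$ for $\beta\le\alpha\le 1$ on a bounded time interval, one obtains $u\in C^\eta([0,T];C^\eta(\R))$ for every $\eta$ in the relevant range.

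The delicate point is the first step, not the second: ``under the assumptions of the previous result'' must be read as ``using the full content of its proof'', and one has to make explicit both that those estimates are uniform on $[0,T]$ and that, once the uniform bound on $\partial_x u$ is in hand, the Gr\"onwall inequality of Step~2 actually closes an $L^\infty_{t,x}$ bound on $\partial_t u$; this is exactly what upgrades the one-sided regularity of Proposition~\ref{prop:estlip} to joint regularity. The interpolation itself is routine; its only subtlety is the matching of the two H\"older exponents, which in the Bochner norm pins the time exponent at $1-\eta$ (so $C^{1/2}$ in time when $\eta=1/2$) and is immaterial if the time regularity is instead measured in the sup-norm in space, in which case the entire range $\eta\in(0,1)$ is retained.
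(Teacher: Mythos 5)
Your proof follows essentially the same route as the paper's: extract from the proof of Proposition~\ref{prop:estlip} that $\|\partial_x u\|_{L^\infty([0,T]\times\R)}$ and $\|\partial_t u\|_{L^\infty([0,T]\times\R)}$ are bounded by constants with the stated dependence, combine with $0\le u\le 1$, and interpolate. The paper simply states these $L^\infty$ bounds and then invokes the ``Aubin--Lions Lemma'' without writing out the interpolation; your version replaces that citation with the elementary two-line interpolation argument, which is in fact what is actually needed (Aubin--Lions is a compactness statement, and invoking it here is a slight abuse of language that you quietly avoid). Your added remark on the well-definedness of $\|\partial_t u(0,\cdot)\|_\infty$ from $\|u_0\|_{C^{1,\alpha}}$ and \eqref{Bdef} is a useful clarification the paper omits.

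One point you raise deserves to be made louder rather than quieter: your interpolation gives $\|u(t,\cdot)-u(s,\cdot)\|_{C^\eta(\R)}\le C|t-s|^{1-\eta}$, hence $u\in C^{1-\eta}([0,T];C^\eta(\R))$, which yields $u\in C^\eta([0,T];C^\eta(\R))$ only for $\eta\le\tfrac12$. For $\eta>\tfrac12$ this does not follow from joint Lipschitz regularity alone; the example $u(t,x)=|x-t|$ (suitably truncated) is jointly Lipschitz but has $\|u(t,\cdot)-u(s,\cdot)\|_{C^\eta}\sim|t-s|^{1-\eta}$, which is not $O(|t-s|^\eta)$ when $\eta>\tfrac12$. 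So the bound produced in the paper (and by your argument) actually establishes the conclusion for $\eta\in(0,\tfrac12]$, or alternatively $u\in C^{1-\eta}([0,T];C^\eta(\R))$ for all $\eta\in(0,1)$; getting the symmetric $C^\eta$-in-time-with-values-in-$C^\eta$-in-space statement for $\eta>\tfrac12$ would require more than the $W^{1,\infty}$ bounds of Proposition~\ref{prop:estlip} (for instance propagating a $C^{1,\alpha}$ bound in $x$ together with a Hölder-in-time bound on $\partial_x u$). Your ``relevant range'' phrasing gestures at this but stops short of saying plainly that the claimed range $\eta\in(0,1)$ is not fully reached by this argument; you should say so.
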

\begin{proof}
Recalling Propositions \ref{prop:regsmooth} and \ref{prop:estlip} there exists $C>0$ such that
\[
\|\partial_t u \|_{\infty} + \|\partial_x u\|_\infty \leq C.
\]
Hence, since $\|u\|_\infty \leq \|u|_{t = 0}\|_\infty$, we deduce from the so-called Aubin-Lions Lemma the required result.
\end{proof}

%
%
%
Putting together Propositions \ref{prop:estlip} and \ref{prop:aubinlions} and Theorem \ref{thm:gronwallvisc}, we then arrive at the following.
\begin{Theorem}
Assume that \eqref{Bdef} and \eqref{Bcomp} hold and that $u_0$ is a non-decreasing function such that $\|u_0\|_{C^{1,\alpha}} < \infty$. Then, there exists a unique viscosity solution $u$ of \eqref{eq143} which is continuous and such that $\partial_xu$ is a bounded function, locally in time.
\end{Theorem}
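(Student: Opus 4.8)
\noindent\emph{Proof strategy.}
The plan is the following. Uniqueness is immediate: a continuous, bounded viscosity solution $u$ of \eqref{eq143} with $\partial_x u\in L^\infty$ locally in time satisfies, on every strip $(0,T)\times\R$, the hypotheses of Theorem~\ref{thm:gronwallvisc}, so two such solutions sharing the initial datum $u_0$ coincide. The real work is the existence part, which I would obtain by regularizing both the equation and the datum, extracting from Propositions~\ref{prop:estlip} and~\ref{prop:aubinlions} estimates that are uniform in the regularization, and passing to the limit.

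\emph{Step 1 (regularized problems).} First mollify the datum, $u_0^\delta=\rho_\delta*u_0$ with $\rho_\delta$ a standard mollifier: then $u_0^\delta$ is smooth and non-decreasing, $u_0^\delta(-\infty)=0$, $u_0^\delta(\infty)=1$, and, since convolution with a probability density is a contraction for the relevant norms, $\|u_0^\delta\|_{C^{1,\alpha}}\le\|u_0\|_{C^{1,\alpha}}$. Then regularize the equation: for $\eps>0$ consider
\be
\partial_t u + c(x)\partial_x u\, A_0[u] + B(x;u)\partial_x u = \eps\,\partial_{xx}u \quad\text{in }(0,\infty)\times\R ,
\ee
with datum $u_0^\delta$. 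This equation is uniformly parabolic and, since $A_0$ maps bounded Lipschitz functions into bounded functions and $u\mapsto B(\cdot;u)$ is an affine smoothing map, a standard fixed-point argument produces a smooth solution $u^{\eps,\delta}$ on a maximal interval, which the a priori bounds below extend to $[0,\infty)$. The constants $0$ and $1$ being stationary solutions, the maximum principle gives $0\le u^{\eps,\delta}\le 1$; moreover $w=\partial_x u^{\eps,\delta}$ solves the linear viscous conservation law $\partial_t w+\partial_x\!\big(w\,(c\,A_0[u^{\eps,\delta}]+B)\big)=\eps\,\partial_{xx}w$ (this is exactly the equation for $\partial_x u$ found in Step~3 of the proof of Proposition~\ref{prop:estlip}, re-grouped), whose zeroth-order terms are linear in $w$; so $w\ge 0$ is propagated and $\int_\R w(t,\cdot)=1$ for all $t$, i.e.\ each $u^{\eps,\delta}(t,\cdot)$ is non-decreasing with the correct normalization at $\pm\infty$. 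With these facts in hand the proofs of Propositions~\ref{prop:estlip} and~\ref{prop:aubinlions} go through with only cosmetic changes: in the $\dot H^\frac12$ identity of Step~1 there the new term contributes $-\eps\|\partial_x u^{\eps,\delta}\|_{\dot H^\frac12}^2\le 0$ and is simply dropped, while in the maximum-principle estimates of Steps~2 and~3 it has a favorable sign. One thus obtains, for every $T>0$, a constant $C=C(T,c,B,\|u_0\|_{C^{1,\alpha}})$, independent of $\eps$ and $\delta$, with
\be
\|\partial_x u^{\eps,\delta}\|_{L^\infty((0,T)\times\R)}+\|\partial_t u^{\eps,\delta}\|_{L^\infty((0,T)\times\R)}\le C ,
\ee
and hence, by Aubin-Lions, a uniform bound for $u^{\eps,\delta}$ in $C^\eta([0,T],C^\eta(\R))$ for every $\eta\in(0,1)$.

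\emph{Step 2 (passage to the limit and conclusion).} Letting $\eps,\delta\to 0$, Arzel\`a-Ascoli and a diagonal extraction in $T$ give a subsequence converging locally uniformly on $[0,\infty)\times\R$ to a function $u$ that is continuous, non-decreasing in $x$, satisfies $\|\partial_x u(T,\cdot)\|_\infty\le C(T)$ for each $T$ and $u(0,\cdot)=u_0$ (as $u_0$ is continuous and $u_0^\delta\to u_0$ locally uniformly), and satisfies $u(t,-\infty)=0$, $u(t,\infty)=1$ — the latter passing to the limit from $\int_\R\partial_x u^{\eps,\delta}(t,\cdot)=1$ together with the uniform modulus of continuity, which prevents escape of mass, exactly as in part~I. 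That $u$ is a viscosity solution of \eqref{eq143} is the usual stability of viscosity solutions under local uniform convergence: for \eqref{dyson:gen} this is built into the definition through the splitting $A_{-\delta_0}[\phi]+A_{\delta_0}[\cdot]$, the singular near-part involving only the fixed test function $\phi$ and the tail part $A_{\delta_0}[u^{\eps,\delta}]\to A_{\delta_0}[u]$ converging thanks to the uniform Lipschitz bound, while the viscous term drops out in the limit, being bounded by $\eps\,\partial_{xx}\phi$ at a contact point of $u^{\eps,\delta}-\phi$. Finally $u$, being continuous, bounded and Lipschitz in $x$ locally in time, falls under Theorem~\ref{thm:gronwallvisc} on every $(0,T)$; this gives uniqueness and, in particular, shows that the whole family $(u^{\eps,\delta})$ converges to this single $u$.

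\emph{Main obstacle.} I expect the only genuinely delicate point to be Step~1: producing the regularized smooth solutions and, above all, checking that the estimates of Propositions~\ref{prop:estlip} and~\ref{prop:aubinlions} stay uniform in the regularization parameters. The sensitive ingredient is the $L^2_t(L^1_x)$ bound on $\partial_t u^{\eps,\delta}$ that starts the Gr\"onwall loop in the proof of Proposition~\ref{prop:estlip}: the new term $\eps\,\partial_{xx}u^{\eps,\delta}$ has $L^1_x$ norm equal to $\eps$ times the total variation of $\partial_x u^{\eps,\delta}(t,\cdot)$, which at $t=0$ is of size $\eps/\delta$, so one must couple $\eps$ to $\delta$ (e.g.\ $\eps=o(\delta)$) and propagate this control in time via the conservation-law structure of $w$. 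Once this bookkeeping is settled, the remaining two tasks — the passage to the limit (soft stability) and the uniqueness (already contained in Theorem~\ref{thm:gronwallvisc}) — are routine.
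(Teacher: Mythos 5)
Your proposal is correct and follows essentially the same route as the paper: the theorem is obtained there precisely by ``putting together'' Propositions \ref{prop:estlip} and \ref{prop:aubinlions} (a priori Lipschitz and H\"older bounds for smooth, monotone solutions) with Theorem \ref{thm:gronwallvisc} (uniqueness of Lipschitz viscosity solutions), the approximation and stability step being left implicit. Your vanishing-viscosity plus mollification construction, including the $\epsilon$--$\delta$ coupling you flag for the $L^2_t(L^1_x)$ bound on $\partial_t u$, is a reasonable way of filling in exactly the details the paper omits.
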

The proof of this result is done by following the lines of Theorem \ref{thm:reg}, namely by an approximation of the equation with smooth data and the addition of the term in $-\eps \partial_{xx}$ to obtain the existence. We do not provide it as it follows the same argument as before.

\subsection{Comments on the cases in which $c(\cdot)$ vanishes}
Let us note that the case in which $c(\cdot)$ vanishes can lead to more complex situations. To justify our claim, we recall the so-called Wishart case. In this situation, the PDE at interest is given by
\be\label{wishart}
\partial_t u + x\partial_x u\int_{\R_+}\frac{u(x)-u(y)}{(x-y)^2}dy + (\eta-1)\partial_xu + b(t,x)\partial_x u= 0 \text{ in } (0,\infty)\times (0,\infty).
\ee
In the previous, $\eta \geq 1$ is a parameter of the model, $b$ is a standard drift term and the previous equation is associated to the Dirichlet boundary condition 
\[
u(t,0) = 0 \text{ for all } t \geq 0.
\]
When $b$ is simply given by  $b(t,x) = -x$, there are explicit stationary solutions of the previous equation. There are given by means of the so-called Marcenko-Pastur distributions, which we recall here. Consider the density $m : (0,\infty) \to \R_+$ given by
\[
m(x) = \frac{\eta\sqrt{(\lambda_+ - x)(x-\lambda_-)}}{2\pi x}\mathbb{1}_{[\lambda_-,\lambda_+]}(x),
\]
where $\lambda_\pm = (1 \pm \sqrt{\eta^{-1}})^2$. Then, define $F(x) = \int_0^x m(y)dy$. This function $F$ satisfies for all $x > 0$
\[
\partial_x F(x)\left(  \eta-1 -x + x \int_{\R_+}\frac{F(x) - F(y )}{(x-y)^2}dy \right) = 0.
\]
Hence, $F$ is a stationary solution of \eqref{wishart} when $b(t,x) = -x$.

Note that for any $\eta > 1$, this stationary solution is Lipschitz continuous, but that this property fails for $\eta =1$, because $F$ is only $C^\frac12$. A singularity is then present at $x = 0$, precisely the point where both $c(\cdot)$ and $b(\cdot)$ vanish. In our opinion, this singularity is a strong argument in favor of the non-propagation of Lipschitz regularity. Even though, we insist upon the fact that even if: i) $F$ is the unique stationary solution of this equation, ii) it is the limit in long time of any solution of \eqref{wishart}, it is still not sufficient to establish the fact that we cannot propagate Lipschitz regularity as it could simply deteriorate in time.

In this case, because the singularity happens at the boundary of the domain of interest, it is fairly immediate to overcome this possible singularity at the boundary by using the Dirichlet boundary conditions. However, in general, if $c(\cdot)$ vanishes inside of the domain, the situation seems to require new arguments.

\subsection{The case of a singular drift}
We explain in this Section how the previous study naturally extends to the case of 
\be\label{dysonb}
\partial_t u + c(x)\partial_x u A_0[u] + b(x) \partial_x u = 0 \text{ in } (0,\infty)\times \R,
\ee
where $c$ is still a smooth function satisfying $\inf c > 0$ but now $b : \R \to \R$ is only a measurable function on which we impose that there exists $C_b > 0$ such that
\be
\ba
&|b(x)| \leq C_b, \text{ a. e. for } x \in \R,\\
&b + C_b Id \text{ is a non-decreasing function}.
\ea
\ee
The last assumption on $b$ implies in particular that for any $x\in \R$, one has $b(x^-):=\lim_{y \to x^-, y \ne x}b(y) \leq \lim_{y \to x^+, y \ne x}b(y)=:b(x^+)$. With a slight abuse of notation, we shall denote the interval $[b(x^-),b(x^+)] = b(x)$. We can then reformulate the second assumption on $b$ with
\be\label{bcompar}
\forall x,y \in \R, p \in b(x), q \in b(y) \quad (p-q)(x-y) \geq -C(x-y)^2,
\ee
\be\label{best}
\partial_x b \geq -C.
\ee
\begin{Rem}
More general interaction kernel could have been considered following exactly Section \ref{sec:reg}.
\end{Rem}
We refer the interested reader to \citep{lions2021cours} for a systematic study of transport equation with drifts presenting this kind of regularity. In our framework, we adapt slightly the notion of viscosity solution we already gave for this case. Namely, we are concerned with the following Definition.
\begin{Def}
An usc function $u : \R_+\times \R$ is a viscosity sub-solution of \eqref{dyson:gen} if for any $T > 0$, for any $\phi \in C^{1,1}_b$, $0 < \delta \leq \infty$  $(t_0,x_0) \in (0,T]\times \R$ point of maximum (resp. minimum) of $u-\phi$, there exists $p_0 \in b(x_0)$ such that
\[
\ba
\partial_t& \phi(t_0,x_0)  + \partial_x\phi(t_0,x_0)c(x_0) \left( A_{-\delta}[\phi(t_0)](x_0) + A_{\delta}[u(t_0)](x_0)\right) \\
&+ p_0\partial_x \phi(t_0,x_0)\leq 0 \text{ (resp. } \geq 0).
\ea
\]
\end{Def}
We can provide the following result.
\begin{Theorem}\label{thm:extensionb}
Given a non-decreasing bounded initial condition $u_0$, there exists a unique bounded, non-decreasing in $x$, viscosity solution of \eqref{dysonb}.
\end{Theorem}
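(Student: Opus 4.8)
The plan is to establish existence and uniqueness separately, adapting the viscosity-solution machinery of part I to the irregular drift $b$.

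\textbf{Existence via vanishing viscosity / regularization of the drift.} First I would approximate $b$ by a sequence of smooth drifts $b_\varepsilon$ obtained by mollification, chosen so that $b_\varepsilon + C_b\,\mathrm{Id}$ remains non-decreasing (mollification preserves monotonicity) and $\|b_\varepsilon\|_\infty \le C_b$ uniformly. For each $\varepsilon$, equation \eqref{dysonb} with drift $b_\varepsilon$ falls into the framework of \eqref{dyson:gen} with a continuous drift, so part I (comparison principle, propagation of monotonicity, and the existence theory for \eqref{dyson:gen}) gives a unique bounded, non-decreasing-in-$x$ viscosity solution $u_\varepsilon$ with initial datum $u_0$. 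The uniform $L^\infty$ bound is immediate from the boundary conditions $0 \le u_\varepsilon \le 1$. To pass to the limit I would invoke the half-relaxed limits $\overline u = \limsup^* u_\varepsilon$ and $\underline u = \liminf_* u_\varepsilon$; the key point is a stability statement for viscosity solutions under convergence of the drift coefficients, which works here because the sub/supersolution inequalities in the new Definition only require the existence of \emph{some} $p_0 \in b(x_0)$, and any limit of values $b_\varepsilon(x_\varepsilon)$ with $x_\varepsilon \to x_0$ lies in $[b(x_0^-),b(x_0^+)] = b(x_0)$ by the monotonicity of $b + C_b\,\mathrm{Id}$. Thus $\overline u$ is a subsolution and $\underline u$ a supersolution of \eqref{dysonb} in the new sense, both with initial datum $u_0$.

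\textbf{Comparison principle for the new definition.} The core is to show that any bounded usc subsolution $u_1$ and bounded lsc supersolution $u_2$ of \eqref{dysonb} with $u_1(0,\cdot) \le u_2(0,\cdot)$ satisfy $u_1 \le u_2$. I would follow the doubling-of-variables scheme used for Theorem \ref{thm:gronwallvisc}, doubling in both time and space, $M_{\varepsilon,r}(t,s,x,y) = u_1(t,x) - u_2(s,y) - \tfrac{1}{2\varepsilon}((x-y)^2+(t-s)^2) - r\psi(x)$ with $\psi(x)=(x^2+1)^{1/3}$, adding a term $-\gamma(t+s)$ to push the maximum to the interior. At the maximum point $(t_*,s_*,x_*,y_*)$ one gets from the subsolution inequality a point $p_0 \in b(x_*)$ and from the supersolution inequality a point $q_0 \in b(y_*)$; subtracting, the troublesome drift contribution is $\tfrac{x_*-y_*}{\varepsilon}(p_0 - q_0)$. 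Here is where \eqref{bcompar} is essential: since $x_* - y_*$ has, up to the $r\psi'$ correction, the sign of the test-function gradient, $(p_0-q_0)(x_*-y_*) \ge -C(x_*-y_*)^2 \ge -C\omega(\varepsilon)\varepsilon$, so this term is controlled and vanishes in the limit exactly as the $c(x_*)-c(y_*)$ term does in Theorem \ref{thm:gronwallvisc}. The elliptic $A_0$ terms are handled identically to the earlier proofs (splitting at $|z|\le\delta$, using the test function there and $u_i$ outside, then sending $r\to 0$, $\delta=\varepsilon^2$, $\varepsilon\to 0$). This yields $\overline u \le \underline u$; combined with $\underline u \le \overline u$ trivially, the half-relaxed limits coincide, giving a bounded continuous viscosity solution, and monotonicity in $x$ passes to the limit. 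Uniqueness is the same comparison argument applied to two solutions.

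\textbf{Main obstacle.} The delicate point is the rigorous handling of the drift term in the comparison argument when $b$ is merely measurable: one must be sure that the viscosity inequalities genuinely furnish selections $p_0 \in b(x_*)$, $q_0 \in b(y_*)$ compatible with the sign of $x_*-y_*$, and that the inclusion $b(x) = [b(x^-),b(x^+)]$ together with \eqref{bcompar} is exactly what closes the estimate — this is precisely the subtlety studied in \citep{lions2021cours}. A secondary technical point is the boundary/initial-time case ($t_*=0$ or $s_*=0$), which I would dispatch by the standard argument sketched in the proof of Theorem \ref{thm:gronwallvisc}, and checking that the stability of the half-relaxed limits correctly inherits the initial datum, using a barrier at $t=0$ built from $u_0$ and the uniform bounds on $c$ and $b$.
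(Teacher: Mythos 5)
Your proposal has one genuine gap and one structural divergence from the paper, and the two are linked.

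\textbf{The gap: a comparison principle without regularity.} You claim a comparison principle for arbitrary bounded usc subsolutions and bounded lsc supersolutions of \eqref{dysonb}, and you propose to prove it by running the doubling-of-variables argument of Theorem \ref{thm:gronwallvisc}. But that argument, as well as the paper's Lemma \ref{lemma:complip}, relies essentially on a Lipschitz bound on at least one of the two functions: in the proof of Theorem \ref{thm:gronwallvisc} the estimate $|x_*-y_*| \le K\varepsilon$ (coming from the uniform Lipschitz constant $K$ of $u_1$) is what tames the factor $\frac{x_*-y_*}{\varepsilon}$ in front of the nonlocal term, and Lemma \ref{lemma:complip} explicitly hypothesizes that one of the two comparands is, uniformly in time, Lipschitz in $x$. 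With only bounded usc/lsc data, nothing controls $\frac{x_*-y_*}{\varepsilon}$ against the terms $A_\delta[u_i]$ produced by the $\partial_x u\,c(x)A_0[u]$ nonlinearity, and the argument does not close. Your treatment of the drift term via \eqref{bcompar} is fine --- $(p_0-q_0)(x_*-y_*) \ge -C_b(x_*-y_*)^2$ is exactly the right observation and matches the paper --- but the obstruction is in the nonlocal part, not the drift.

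\textbf{The structural divergence and how the paper avoids the gap.} You regularize the drift $b$ by mollification and keep the rough initial datum $u_0$; the paper keeps $b$ as it is and instead sandwiches $u_0$ between Lipschitz, non-decreasing functions $u_{0,\varepsilon} \le u_0 \le u_0^\varepsilon$ with $u_0^\varepsilon - u_{0,\varepsilon} \le \varepsilon$. For each such datum, Lemma \ref{lemma:existlipschitz} produces a Lipschitz-in-$x$ solution (the a priori Lipschitz bound on $\partial_x u$ is precisely what the smoothness of $u_0$ buys), and then Lemma \ref{lemma:complip} applies to compare any two of them (and, for uniqueness, to compare a general solution with the Lipschitz ones). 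The squeeze $u^\varepsilon - \varepsilon \le u_* \le u^* \le u^\varepsilon$ together with the half-relaxed stability Lemma \ref{lemma:stab} identifies the limit. In other words, the paper is careful to only ever invoke comparison with a Lipschitz function on at least one side. Your route would need the same care: either prove the Lipschitz propagation Lemma \ref{lemma:existlipschitz} for the mollified drifts $b_\varepsilon$ uniformly in $\varepsilon$ (possible, since $\|b_\varepsilon\|_\infty$ and the lower bound on $\partial_x(b_\varepsilon + C_b\,\mathrm{Id})$ are uniform), but then you are also forced to take Lipschitz initial data, at which point you have essentially reconstructed the paper's argument; or find a genuinely new comparison principle that works without regularity, which you have not supplied. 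Your observation that limits of $b_\varepsilon(x_\varepsilon)$ land in $[b(x_0^-), b(x_0^+)]$ is correct and would be needed in the stability step, but the comparison step is where the proof currently does not go through.
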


The uniqueness part relies essentially on the stability of viscosity solutions as well as on the following Lemma.

\begin{Lemma}\label{lemma:complip}
Let $u_1$ and $u_2$ be two bounded functions, non-decreasing in $x$ for all $t \geq 0$, which are respectively viscosity sub and super solutions of \eqref{dysonb}. Assume that one of them is, uniformly in time, Lipschitz continuous in $x$. If for all $x \in \R$, $u_1(0,x) \leq u_2(0,x)$, then for all $t \geq 0, x \in \R$, $u_1(t,x) \leq u_2(t,x)$.
\end{Lemma}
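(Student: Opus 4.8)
The plan is to run the standard doubling-of-variables argument for viscosity solutions, but adapted to the nonlocal operator $A_0$ (split via $A_{-\delta}$ and $A_{\delta}$ as in the definition) and to the merely measurable drift $b$ encoded through the interval $b(x)=[b(x^-),b(x^+)]$. Suppose, for contradiction, that $\sup_{t,x}(u_1(t,x)-u_2(t,x)) > 0$. I would first reduce to a situation with compactness: add a small coercive penalization $-r\psi(x)$ with $\psi(x)=(x^2+1)^{1/3}$ (exactly the device used in the proof of Theorem \ref{thm:gronwallvisc}), together with a term $-\gamma t$ to strictly localize in time and guarantee $t_0>0$, and I would double the time and space variables with the quadratic penalty $\frac{1}{2\eps}((x-y)^2+(t-s)^2)$. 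Since $u_1,u_2$ are bounded, a maximizer $(t_*,s_*,x_*,y_*)$ of
\[
u_1(t,x)-u_2(s,y)-\frac{1}{2\eps}((x-y)^2+(t-s)^2)-r\psi(x)-\gamma(t+s)
\]
exists, and $|x_*-y_*|\le K\eps$ where $K$ is the uniform-in-time Lipschitz constant of whichever of $u_1,u_2$ is Lipschitz; one also has the usual $(x_*-y_*)^2+(t_*-s_*)^2\le\omega(\eps)\eps$.

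Next I would write down the sub-solution inequality for $u_1$ at $(t_*,x_*)$ against the test function $\varphi_1(x)=\frac{1}{2\eps}(x-y_*)^2+r\psi(x)$ and the super-solution inequality for $u_2$ at $(s_*,y_*)$ against $\varphi_2(y)=-\frac{1}{2\eps}(x_*-y)^2$, each using the $A_{-\delta}/A_{\delta}$ splitting for an arbitrary $\delta>0$; the sub-solution inequality comes with some $p_* \in b(x_*)$ and the super-solution one with some $q_* \in b(y_*)$. Subtracting, the key cancellation is the one already exploited in the paper: the $A_{\delta}$ parts combine into
\[
\int_{|z|>\delta}\frac{(u_1(t_*,x_*)-u_1(t_*,x_*+z))-(u_2(s_*,y_*)-u_2(s_*,y_*+z))}{z^2}\,dz,
\]
which is $\le 0$ up to the $r\psi$-contribution (bounded by $r(1+\delta^{-1})$) because $(t_*,s_*,x_*,y_*)$ is a maximum; the $A_{-\delta}$ parts against the smooth $\varphi_i$ are $O(\delta(\eps^{-1}+r))$. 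The drift terms contribute $\frac{x_*-y_*}{\eps}(p_*-q_*)$, and here I use \eqref{bcompar}: $(p_*-q_*)(x_*-y_*)\ge -C(x_*-y_*)^2$, so $\frac{x_*-y_*}{\eps}(p_*-q_*)\ge -C\frac{(x_*-y_*)^2}{\eps}\ge -CK^2\eps$, which vanishes as $\eps\to0$. The terms $\frac{(c(x_*)-c(y_*))(x_*-y_*)}{\eps}$ times the singular integral of $u_1$ are controlled using $|c(x_*)-c(y_*)|\le \mathrm{Lip}(c)|x_*-y_*|\le \mathrm{Lip}(c)K\eps$ against the $A_{\delta}[u_1]$ bound $O(\delta^{-1})$ plus the smooth $O(\delta(\eps^{-1}+r))$ part, i.e. $O(K\,\delta^{-1}\eps)+O(K\delta)$, and similarly the $r\psi'(x_*)c(x_*)$ prefactor times those integrals is $O(r\delta^{-1})+O(r\delta\eps^{-1})$.

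Collecting everything, I obtain an inequality of the shape
\[
2\gamma \le C(K+r+\omega(\eps))\big(r(1+\delta^{-1})+\delta\eps^{-1}+K\delta^{-1}\eps\big)+CK^2\eps+CK\eps,
\]
and then I let $r\to0$ first, then choose $\delta=\eps^{2}$ (so $\delta\eps^{-1}=\eps\to0$ and $K\delta^{-1}\eps=K\eps^{-1}\eps=K$... — more carefully $\delta^{-1}\eps=\eps^{-1}\to\infty$, so I instead take $\delta=\eps^{3/2}$ giving $\delta\eps^{-1}=\eps^{1/2}\to0$ and $\delta^{-1}\eps=\eps^{-1/2}$ multiplied by the prefactor $\eps$ from $|c(x_*)-c(y_*)|\,|x_*-y_*|\sim\eps^2$, i.e. $\eps^{2}\delta^{-1}=\eps^{1/2}\to0$), and finally $\eps\to0$, reaching $2\gamma\le 0$, a contradiction. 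Hence $u_1\le u_2$ everywhere. The only genuinely delicate point — the main obstacle — is the bookkeeping of the $\delta$-dependence: the nonlocal term forces the three competing scales $\delta\eps^{-1}$, $r\delta^{-1}$ and $\eps^{2}\delta^{-1}$ (the last from the Lipschitz-in-$c$ times quadratic-in-$(x_*-y_*)$ gain), and one must verify there is a single choice $\delta=\delta(\eps)\to0$ killing all of them simultaneously after $r\to0$; the measurable drift is then handled cleanly by \eqref{bcompar}, which is precisely designed so that the bad term $\frac{x_*-y_*}{\eps}(p_*-q_*)$ has the right sign up to an $O(\eps)$ error, exactly as in Theorem \ref{thm:gronwallvisc} with the Lipschitz-in-$x$ part of $B$.
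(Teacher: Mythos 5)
Your proposal follows the paper's own route exactly: double variables in time and space, localize with the coercive penalization $r\psi$ and the $\gamma(t+s)$ term, use the $A_{-\delta}/A_\delta$ split in the viscosity inequalities, exploit the Lipschitz assumption to bound $|x_*-y_*|\le K\eps$, and handle the merely measurable drift through the one-sided monotonicity \eqref{bcompar}, which gives $\tfrac{x_*-y_*}{\eps}(p_*-q_*)\ge -C_b\tfrac{(x_*-y_*)^2}{\eps}\ge -C_bK^2\eps\to 0$. That last point is the only genuinely new ingredient compared with the comparison argument of Theorem~\ref{thm:gronwallvisc}, and the paper's proof is precisely this remark plus ``the rest is as before''; you reproduce it correctly.

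Where you slip is in the $\delta$--bookkeeping at the very end. The prefactor $(c(x_*)-c(y_*))\tfrac{x_*-y_*}{\eps}$ is $O(L K^2\eps)$ (one factor $\eps$ from $|c(x_*)-c(y_*)|\le LK\eps$ is eaten by the $1/\eps$, leaving a single power of $\eps$), so the offending term scales like $\eps\delta^{-1}$, not $\eps^2\delta^{-1}$ as you write in your correction. With the crude bound $|A_\delta[u_1]|\le C\delta^{-1}$ there is then no power-law choice $\delta=\eps^\alpha$ that kills both $\delta\eps^{-1}$ (needs $\alpha>1$) and $\eps\delta^{-1}$ (needs $\alpha<1$); your proposed $\delta=\eps^{3/2}$ does not work. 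The issue is already latent in the paper's Theorem~\ref{thm:gronwallvisc} display, which you are following, and is not caused by the drift $b$; the clean way to close it is to replace the crude $\delta^{-1}$ bound by the sharper bound $|A_\delta[v]|\le C\big(K\log(1/\delta)+\|v\|_\infty\big)$, valid for bounded $K$-Lipschitz $v$ (split at $|z|=1$, Lipschitz inside, boundedness outside). Then the term becomes $O\big(\eps\log(1/\delta)\big)$, which vanishes for any $\delta=\eps^\alpha$, $\alpha>1$, including the paper's $\delta=\eps^2$. One must also ensure the standalone $A_\delta$ factor is attached to whichever of $u_1,u_2$ is assumed Lipschitz: if it is $u_2$, reorganize the subtraction by adding and subtracting $c(x_*)\partial_x\varphi_1\big(A_{-\delta}[\varphi_2](y_*)+A_\delta[u_2](y_*)\big)$ rather than factoring through $u_1$.
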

\begin{proof}
The proof of this Lemma is a straightforward extension of Lemma 4.1 in \citep{bertucci2022spectral}. Indeed, the localization argument can be done exactly as in the proof of Theorem \ref{thm:gronwallvisc}, namely by the addition of the term in $r\psi$. The terms involving $b$ will simply be $\epsilon^{-1}(b(x_*)- b(y_*))(x_* -y_*)$, where $b(x_*)$ and $b(y_*)$ could be replaced by any element in those sets. By the assumptions on $b$, this term is greater that $-C_b \epsilon^{-1}(x_*-y_*)^2$ which converges toward $0$ as $\epsilon \to 0$. Hence the proof can be carried on in the same manner.
\end{proof}
In this case, the stability of viscosity solutions is expressed in the next result.
\begin{Lemma}\label{lemma:stab}
Let $(u_n)_{n \geq 0}$ be a bounded sequence of bounded viscosity sub solutions of \eqref{dysonb} (resp. viscosity super solutions). The function $u^* := \limsup_{n \to \infty} u_n$  (resp. $u^* = \liminf_{n \to \infty}u_n$ is a viscosity sub solution (resp. a viscosity super solution) of \eqref{dysonb}.
\end{Lemma}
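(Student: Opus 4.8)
The plan is to prove the statement by the classical stability argument for viscosity solutions under \emph{half‑relaxed limits}, the only two points not entirely routine being the passage to the limit in the nonlocal term $A_\delta[\cdot]$ and the fact that the drift $b$ is only measurable, so that the limiting multiplier must be located inside the multivalued graph $b(\cdot)$. I treat the sub‑solution statement, the super‑solution one being obtained by the symmetric argument (with $\liminf$/Fatou replaced by $\limsup$/reverse Fatou). Throughout, $u^*$ is understood in the relaxed sense $u^*(t,x)=\limsup_{n\to\infty,\,(s,y)\to(t,x)}u_n(s,y)$ — exactly as in the corrected Theorem~3 of part~I — which is automatically usc, bounded by $\sup_n\|u_n\|_\infty$, and, since every sub‑solution considered here is non‑decreasing in $x$, non‑decreasing in $x$ as well.

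First I fix $T>0$, $\phi\in C^{1,1}_b$, $\delta\in(0,\infty]$ and a point $(t_0,x_0)\in(0,T]\times\R$ of maximum of $u^*-\phi$. By the usual localization/perturbation trick I add to $\phi$ a smooth bounded non‑negative term vanishing to second order at $(t_0,x_0)$ so that the maximum becomes strict and local; this does not change $\partial_t\phi$ and $\partial_x\phi$ at $(t_0,x_0)$, and its contribution to $A_{-\delta}[\phi(t_0,\cdot)](x_0)$ is a fixed quantity I will let tend to $0$ at the end. The classical lemma on relaxed limits then produces a subsequence $n_k\to\infty$ and points $(t_k,x_k)\to(t_0,x_0)$, each a local maximum of $u_{n_k}-\phi$, with $u_{n_k}(t_k,x_k)\to u^*(t_0,x_0)$; in particular $t_k>0$ for $k$ large, so $u_{n_k}$ being a viscosity sub‑solution gives, for this $\delta$, some $p_k\in b(x_k)\subset[-C_b,C_b]$ with
\[
\partial_t\phi(t_k,x_k)+c(x_k)\partial_x\phi(t_k,x_k)\big(A_{-\delta}[\phi(t_k,\cdot)](x_k)+A_\delta[u_{n_k}(t_k,\cdot)](x_k)\big)+p_k\partial_x\phi(t_k,x_k)\le 0.
\]

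The core of the proof is the passage to the limit $k\to\infty$ here. The terms $\partial_t\phi,\partial_x\phi,c$ converge by continuity, and $A_{-\delta}[\phi(t_k,\cdot)](x_k)\to A_{-\delta}[\phi(t_0,\cdot)](x_0)$ because $\phi\in C^{1,1}$ makes this singular integral absolutely convergent after odd symmetrization, with bounds uniform in $k$. Up to a further extraction $p_k\to p_0$, and $p_0\in b(x_0)$ because the completed graph $\{(x,p):p\in b(x)\}$ is closed: $b+C_b\,\mathrm{Id}$ is non‑decreasing, hence has one‑sided limits everywhere which vary monotonically, and a sandwich argument (equivalently: the completed graph is the graph of a maximal monotone operator translated by the continuous map $-C_b\,\mathrm{Id}$) gives closedness — this is precisely the structural fact already used in Lemma~\ref{lemma:complip}. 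The genuinely nonlocal term is $A_\delta[u_{n_k}(t_k,\cdot)](x_k)=\int_{|z|\ge\delta}z^{-2}\big(u_{n_k}(t_k,x_k)-u_{n_k}(t_k,x_k+z)\big)\,dz$: its integrand is dominated by $2(\sup_n\|u_n\|_\infty)\,z^{-2}\in L^1(\{|z|\ge\delta\})$, so Fatou applies, and since $u_{n_k}(t_k,x_k)\to u^*(t_0,x_0)$ while $\limsup_k u_{n_k}(t_k,x_k+z)\le u^*(t_0,x_0+z)$ for each fixed $z$ by the definition of $u^*$, one gets $\liminf_k A_\delta[u_{n_k}(t_k,\cdot)](x_k)\ge A_\delta[u^*(t_0,\cdot)](x_0)$. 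To combine this with its coefficient in the right direction I use that $c>0$ and that $\partial_x\phi(t_0,x_0)\ge 0$ at a maximum of $u^*-\phi$ (because $u^*$ is non‑decreasing in $x$), whence $\liminf_k c(x_k)\partial_x\phi(t_k,x_k)A_\delta[u_{n_k}(t_k,\cdot)](x_k)\ge c(x_0)\partial_x\phi(t_0,x_0)A_\delta[u^*(t_0,\cdot)](x_0)$. Taking $\liminf_k$ in the displayed inequality then yields the sub‑solution inequality for $u^*$ at $(t_0,x_0)$ with multiplier $p_0$ and test function the perturbed $\phi$; letting the second‑order perturbation tend to $0$ (extracting once more in $p_0$) removes it, and since $\delta\in(0,\infty]$ was arbitrary, $u^*$ is a viscosity sub‑solution. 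The super‑solution statement follows verbatim after replacing $\limsup\leftrightarrow\liminf$ and Fatou by reverse Fatou, using $\liminf_k u_{n_k}(t_k,x_k+z)\ge u_*(t_0,x_0+z)$.

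The step I expect to be the main obstacle is exactly this limit in the nonlocal tail $A_\delta[u_{n_k}]$: one only has the one‑sided relaxed convergence $\limsup_k u_{n_k}\le u^*$, hence only a one‑sided Fatou estimate, and it points in the direction useful for the sub‑solution inequality only because of the sign $c\,\partial_x\phi\ge 0$ — i.e. thanks to the monotonicity in $x$ that is built into the problem. The discontinuity of $b$ is a secondary but real difficulty, resolved by the closedness of its completed graph.
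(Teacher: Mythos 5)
Your proof is correct but takes a genuinely longer route than the paper. The paper simply applies the sub‑solution inequality with $\delta=\infty$: then $A_{-\delta}$ is the full operator $A_0$ acting on the smooth test function $\phi$, the term $A_\delta[u_n]$ disappears entirely (it is an integral over the empty set), and the passage to the limit $(t_n,x_n)\to(t_*,x_*)$ is immediate by continuity of $\phi,\partial_t\phi,\partial_x\phi,c,A_0[\phi]$ plus the closed‑graph argument for $b$ that you also use. The only nonlocal object in the limit inequality is on $\phi$, so the one‑sided Fatou estimate — which you rightly identify as the delicate step — is never needed.

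Your version, which keeps a general $\delta$ and confronts the tail $A_\delta[u_{n_k}(t_k,\cdot)](x_k)$ directly via Fatou, does work, but at a price: to turn the one‑sided inequality $\liminf_k A_\delta[u_{n_k}]\ge A_\delta[u^*]$ into the sub‑solution inequality you must know $c\,\partial_x\phi\ge 0$, which you obtain from monotonicity of $u^*$ in $x$. That monotonicity is not in the statement of the Lemma (it is only true for the particular sequences the Lemma is later applied to), so as written your argument proves a slightly weaker claim than stated, whereas the paper's $\delta=\infty$ shortcut does not need any sign and applies to arbitrary bounded sub/super‑solutions. The rest — strictification of the maximum, the Barles–Perthame extraction of $(t_n,x_n)$, the treatment of $p_n\in b(x_n)$ through closedness of the filled‑in graph of $b$, and the symmetric argument for super‑solutions — matches the paper. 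If you want to keep your route, the cleanest fix is to simply take $\delta=\infty$ in the test‑function inequality before passing to the limit, at which point the monotonicity hypothesis (and the Fatou step) can be dropped.
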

\begin{proof}
Let $\phi \in \mathcal{C}^{1,1}_b$ such that $u^*- \phi$ has a point of maximum at $(t_*,x_*)$. Without loss of generality, we can assume that this is a point of strict maximum. Once again, without loss of generality, we can assume that $u_n - \phi$ has a point of maximum at some $(t_n,x_n)$. Remark that $(t_n,x_n) \to (t_*,x_*)$ as $n \to \infty$. Since $u_n$ is a viscosity sub solution of \eqref{dysonb}, it holds that for all $n \geq 0$, there exists $p_n \in b(x_n)$ such that 
\[
\partial_t \phi(t_n,x_n) + c(x_n)\partial_x \phi(t_n,x_n) A_0[\phi(t_n,\cdot)](x_n) + b(x_n)\partial_x \phi(t_n,x_n) \leq 0.
\]
Thanks to the regularity of $b$, we deduce that, extracting a subsequence if necessary, $p_n \to p_* \in b(x_*)$ as $n \to \infty$. Passing to the limit in the equation, we deduce that $u^*$ is viscosity sub solution of \eqref{dysonb}. The case of viscosity super solutions follows the same argument.
\end{proof}
With the help of those two results, the uniqueness part of Theorem \ref{thm:extensionb} is proven exactly by following the lines of Theorem 6 in \citep{bertucci2022spectral}.
Concerning the existence part, namely for general initial conditions, the proof now follows.
\begin{proof}
Consider $u_{0,\eps}$ and $u_0^\eps$ two Lipschitz non-decreasing functions such that for all $\eps > \eps' >0$, $u_{0,\eps} \leq u_{0,\eps'} \leq u_0 \leq u_0^{\eps'} \leq u_0^\eps$ together with $u_0^\eps - u_{0,\eps} \leq \eps$. For any $\epsilon > 0$, denote by $u_\eps$ (resp. $u^\eps$) the solution of \eqref{dysonb} with initial condition $u_{0,\eps}$ (resp. $u^\eps$). From Lemma \ref{lemma:complip}, we know that $(u_{\eps})_{\eps > 0}$ (resp. $(u^\eps)_{\eps > 0}$) is a non-decreasing (resp. non increasing) sequence. Hence it converges toward some function $u_*$ (resp. $u^*$). We clearly have $u_* \leq u^*$. On the other hand, thanks to Lemma \ref{lemma:stab}, we know that the lsc regularization of $u_*$ is a viscosity super solution of \eqref{dysonb} and that the usc regularization of $u^*$ is a viscosity sub solution of \eqref{dysonb}, hence we deduce from the comparison principle that $u_* \geq u^\eps - \epsilon$ since the inequality holds at the initial time. Passing to the limit $\eps \to 0$, we deduce that $u_* = u^*$ is a viscosity solution of \eqref{dysonb} with initial condition $u_0$.
\end{proof}

\section{Some identities for the Dyson flow}\label{sec:identities}
In this Section, we prove various results around the so-called Dyson flow, i.e. the equation
\be\label{dysoneq}
\partial_t m + \partial_x(mH[m]) = 0 \text{ in } (0,\infty)\times \R,
\ee
when the initial condition $m_0$ is in $\mathcal{P}(\R)$.
In each of the following cases, the strategy of proof is the same: we show a certain property for smooth solutions of \eqref{dysoneq} ; we then proceed by a regularizing argument to show that the associated property is also valid for solutions $m$ characterized by the fact that $u(t,x) = m(t,(-\infty,x])$ is a viscosity solution of \eqref{dysonint}. Since we detailed such an approximation in Section \ref{sec:reg}, we do not detail it here.

\subsection{Decay of $L^p$ norms}
We prove here the following statement.
\begin{Prop}
For any $m_0 \in \mathcal{P}_2(\R)$, the unique solution of \eqref{dysoneq} with initial condition $m_0$ is such that for any $0 \leq t \leq s$, $1 \leq p \leq \infty$
\[
\|m(t)\|_p \geq \|m(s)\|_p.
\]
\end{Prop}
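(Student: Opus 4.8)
The plan is to establish the monotonicity $\frac{d}{dt}\|m(t)\|_p^p \leq 0$ for smooth solutions and then transfer it to general $m_0 \in \mathcal{P}_2(\R)$ by the same regularization scheme used in the previous subsection. For the a priori estimate, I would take $m$ a smooth, globally Lipschitz, decaying solution of \eqref{dysoneq}, fix $1 < p < \infty$, multiply the equation by $p\, m^{p-1}$ (using $m \geq 0$, which is preserved by the flow) and integrate in $x$. The transport term gives
\[
\int_\R p\, m^{p-1}\partial_x(mH[m])\,dx = -\int_\R (p-1) m^{p} \partial_x(H[m])\,dx + \text{(boundary)} = -(p-1)\int_\R m^p A_0[m]\,dx,
\]
after an integration by parts and using $\partial_x H = A_0$; the boundary terms vanish because $m$ decays at infinity. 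Hence $\frac{d}{dt}\|m(t)\|_p^p = -(p-1)\int_\R m^p A_0[m]\,dx$. The key point is then that $\int_\R m^p A_0[m]\,dx \geq 0$: this is the statement that $A_0$, being a positive (half-Laplacian) operator, satisfies a Stroock--Varopoulos / Córdoba--Córdoba type inequality. Concretely, writing the singular integral and symmetrizing,
\[
\int_\R m^p A_0[m]\,dx = \frac12 \int_\R\int_\R \frac{(m(x)^p - m(y)^p)(m(x) - m(y))}{(x-y)^2}\,dx\,dy \geq 0,
\]
since $(a^p - b^p)(a-b) \geq 0$ for $a,b \geq 0$. This gives $\frac{d}{dt}\|m(t)\|_p^p \leq 0$, hence $\|m(t)\|_p \geq \|m(s)\|_p$ for $0 \leq t \leq s$ and $1 \leq p < \infty$; the case $p = \infty$ follows by letting $p \to \infty$, and is anyway already contained in \eqref{eq:est}.

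Next I would add the viscous regularization: for $\epsilon > 0$ consider $\partial_t m_\epsilon - \epsilon \partial_{xx} m_\epsilon + \partial_x(m_\epsilon H[m_\epsilon]) = 0$ with a smooth $C^2_b$ nonnegative $L^1$ approximation of $m_0$, whose well-posedness in $C^{1,2}_b$ is Lemma \ref{lemma:existsmooth}. The viscous term contributes $-\epsilon\int_\R p(p-1) m_\epsilon^{p-2}(\partial_x m_\epsilon)^2\,dx \leq 0$, so the monotonicity survives — in fact it only helps. Thus for each $\epsilon$ and $0 \leq t \leq s$ one has $\|m_\epsilon(t)\|_p \geq \|m_\epsilon(s)\|_p$. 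Then I pass to the limit: integrating the viscous equation in $x$ gives a viscosity solution $u_\epsilon$ of the regularized primitive equation which, by Proposition \ref{pro:reg}, is uniformly (in $\epsilon$ and in the mollification parameter) Lipschitz in $x$ for $t > 0$; by the comparison principle $u_\epsilon \to u$ locally uniformly, where $u(t,x) = m(t,(-\infty,x])$ is the unique viscosity solution of \eqref{dysonint}. Consequently $m_\epsilon(t,\cdot) \rightharpoonup m(t,\cdot)$ weakly-$*$, and since $m_\epsilon(t,\cdot)$ is bounded in $L^\infty$ by $t^{-1}$ uniformly in $\epsilon$, the convergence also holds weakly in every $L^p$, $1 < p \leq \infty$. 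By weak lower semicontinuity of the $L^p$ norm, $\|m(s)\|_p \leq \liminf_\epsilon \|m_\epsilon(s)\|_p \leq \liminf_\epsilon \|m_\epsilon(t)\|_p$; to close the argument I need $\limsup_\epsilon \|m_\epsilon(t)\|_p \leq \|m(t)\|_p$, which for $t > 0$ follows from the uniform $L^\infty$ bound together with the fact that, along a further subsequence, $m_\epsilon(t)$ converges in $L^p_{loc}$ and the tails are controlled (mass is conserved, so no $L^1$ mass escapes to infinity, and the $L^\infty$ bound upgrades this). Combining the two inequalities yields $\|m(t)\|_p \geq \|m(s)\|_p$.

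The main obstacle is the last limiting step for $p$ strictly between $1$ and $\infty$: weak convergence alone only gives lower semicontinuity, which points the wrong way for the inequality we want at time $s$ but the right way only if we can also pin down the norm at time $t$. The cleanest route, which I would actually follow, is to avoid a two-sided control by exploiting that the inequality is required for \emph{all} pairs $t \leq s$: fix $t$, and note $\|m(t)\|_p \geq \limsup_\epsilon \|m_\epsilon(t)\|_p$ will fail in general, so instead I would run the estimate differently — prove $\|m_\epsilon(t)\|_p \geq \|m_\epsilon(s)\|_p$, then take $t \to 0^+$ after the limit in $\epsilon$, using that $m_\epsilon(0) = m_{0,\delta}$ has $\|m_{0,\delta}\|_p \to \|m_0\|_p$ (or $+\infty$, consistently) and continuity of $t \mapsto \|m_\epsilon(t)\|_p$. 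Alternatively, and most robustly, since for $t > 0$ we have genuine $L^\infty$ bounds and the limit equation is well-posed with $m(t) \in L^\infty$, one can simply rerun the clean a priori computation directly on $m$ itself for $t > 0$, justified because $m$ is then bounded and the primitive $u$ is Lipschitz in $x$; the Córdoba--Córdoba inequality and the integration by parts are valid for such $m$ by approximation in $x$ at fixed $t$. This sidesteps the delicate interchange of limits entirely, reducing everything to the algebraic inequality $(a^p-b^p)(a-b)\geq 0$ and the structural identity $\partial_x H = A_0$.
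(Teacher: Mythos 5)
Your proof follows essentially the same route as the paper: multiply by $m^{p-1}$, integrate, reduce $\frac{d}{dt}\|m\|_p^p$ to $-(p-1)\int_\R m^p A_0[m]\,dx$, and conclude nonnegativity of $\int m^p A_0[m]$ via the symmetrized (Stroock--Varopoulos / C\'ordoba--C\'ordoba) form $\tfrac12\int\int\frac{(m(x)-m(y))(m^p(x)-m^p(y))}{(x-y)^2}\,dx\,dy$, handling $p=1$ by mass conservation and $p=\infty$ via Proposition \ref{pro:reg}; the approximation step is the one from the preceding subsection, and your remark that lower semicontinuity alone is insufficient and that one should instead use the $L^\infty$ regularizing effect (so that $m(t)\in L^\infty$ for $t>0$ and the a priori estimate can be rerun on $[t,s]$) is a correct and more careful version of what the paper asserts in one line. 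One small slip: your intermediate equality $\int p\,m^{p-1}\partial_x(mH[m])\,dx = -(p-1)\int m^p A_0[m]\,dx$ should read $+(p-1)\int m^p A_0[m]\,dx$; combined with the minus sign coming from $\partial_t m=-\partial_x(mH[m])$, which was also dropped, the two errors cancel and your displayed formula for $\frac{d}{dt}\|m\|_p^p$ is correct.
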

\begin{proof}
We only prove the statement for smooth solutions, the general results can be obtained by approximation like the previous one, provided some sort of continuity in time for the narrow convergence of probability measures. Such a continuity holds since $m_0 \in \mathcal{P}_2(\R)$. The relation is clearly true for $p=1$ and, observing the proof of Proposition \ref{prop:regsmooth} in the case of \eqref{dysoneq}, it is also true for $p = \infty$. Hence, it only remains to prove it for $1 < p < \infty$. Multiplying \eqref{dysoneq} by $m^{p-1}$ and integrating we obtain
\[
\ba
\frac{d}{dt} \|m(t)\|_p^p &= \int_\R (\partial_x) m m^{p-1} H[m]\\
&= \frac 1p\int_\R\partial_x(m^p)H[m]\\
&= -\frac 1p\int_\R m^pA_0[m]\\
& = -\frac 1p\int_\R m^p(x) \int_\R\frac{m(x) - m(y)}{(x-y)^2}dydx\\
& = -\frac{1}{2p}\int_\R\int_\R \frac{(m(x)-m(y))(m^p(x)-m^p(y))}{(x-y)^2}dxdy.
\ea
\]
The last term being clearly non-positive since $m \geq 0$, the result follows.
\end{proof}

\subsection{Around the free entropy}

We introduce the function $E : \mathcal{P}_2(\R) \to \R\cup \{-\infty\}$ which is often called the free entropy of the physical system associated to the Dyson flow. It is defined by
\[
E(µ) = \frac 12 \int_\R \int_\R \log(|x-y|)µ(dx)µ(dy).
\]

For sufficiently smooth $µ$, we have the following
\[
\nabla_µ E(µ,x) = \int_\R\log(|x-y|)µ(dy),
\]
\[
D_µ E(µ,x) = H[µ](x).
\]
Hence, this quantity is deeply linked with \eqref{dysoneq}. We shall prove later on that it is increasing and continuous along the Dyson flow. We begin by remarking that $E$ can be easily expressed in terms of the Fourier transform. Indeed, denote by $\hat f$ the normalized Fourier transform of a distribution $f$ and by $g(x) = \log(|x|)$. Recall the classical relation
\[
\hat{g }(\xi) = -\frac 12 f.p. \left(\frac{1}{|\xi|}\right) -\gamma \delta_0(\xi),
\]
where $f.p. \left(\frac{1}{|\xi|}\right)$ denotes the finite part of $\xi \to |\xi|^{-1}$ and $\gamma$ is the Euler constant. Hence, since $E(m) = \frac 12 \langle m*g,m\rangle_{L^2}$, it holds that
\[
E(m) = -\frac 14 \left\langle f.p. \left(\frac{1}{|\xi|}\right), \hat{m}^2\right\rangle_{L^2} - \gamma.
\]
This remark leads us to the following result.
\begin{Prop}\label{prop:E}
\begin{itemize}
\item Take $m \in \Pt$, then $E(m) > -\infty$ if and only if $m \in H^{-\frac 12}$.
\item For any $m \in \Pt$ such that $E(m) > -\infty$, there exists a sequence $(m^n)_{n \geq 0}$, valued in $\Pt$, with smooth bounded densities, such that $m^n  \rightharpoonup m$ and $E(m^n) \to E(m)$ as $n \to \infty$.
\item The function $E(\cdot)$ is upper semi-continuous for the narrow topology.
\end{itemize}
\end{Prop}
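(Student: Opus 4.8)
The plan is to read off all three statements from the Fourier representation $E(m) = -\tfrac14\langle f.p.(|\xi|^{-1}),|\hat m(\xi)|^2\rangle - \gamma$ recalled above, after unwinding the finite part as, for $\phi=|\hat m|^2$ with $\phi(0)=|\hat m(0)|^2=1$,
\[
\langle f.p.(|\xi|^{-1}),\phi\rangle = \int_{\{|\xi|\le1\}}\frac{\phi(\xi)-1}{|\xi|}\,d\xi + \int_{\{|\xi|>1\}}\frac{\phi(\xi)}{|\xi|}\,d\xi .
\]
For the first point, $m\in\mathcal{P}_2(\R)$ makes $\hat m$ of class $C^2$ with $\hat m(0)=1$, so $|\hat m(\xi)|^2 = 1 - \xi^2\,\mathrm{Var}(m)+o(\xi^2)$ near $0$ and $\tfrac{|\hat m(\xi)|^2-1}{|\xi|}$ stays bounded on $\{|\xi|\le1\}$: the low-frequency integral always converges absolutely. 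Since moreover $E(m)\le\tfrac12\iint\log^+|x-y|\,m(dx)m(dy)\le\tfrac12\iint|x-y|\,m(dx)m(dy)<\infty$ (elements of $\mathcal{P}_2(\R)$ have a finite first moment), $E(m)>-\infty$ holds iff $\int_{\{|\xi|>1\}}|\hat m(\xi)|^2|\xi|^{-1}\,d\xi<\infty$. As $\int_{\{|\xi|\le1\}}|\hat m(\xi)|^2(1+\xi^2)^{-1/2}\,d\xi\le 2$ always, while $(1+\xi^2)^{-1/2}\asymp|\xi|^{-1}$ on $\{|\xi|>1\}$, this is precisely the condition $\|m\|_{H^{-1/2}}^2=\int|\hat m(\xi)|^2(1+\xi^2)^{-1/2}\,d\xi<\infty$, i.e. $m\in H^{-1/2}$.

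For the second point I would use Gaussian mollification: let $g_n$ be the centred Gaussian density of variance $n^{-2}$ and $m^n:=m*g_n$. Then $m^n\in\mathcal{P}_2(\R)$, $m^n$ has a smooth bounded density, $m^n\rightharpoonup m$ (approximate identity), and $\hat{m^n}(\xi)=\hat m(\xi)e^{-\xi^2/(2n^2)}$, so $|\hat{m^n}(\xi)|^2=|\hat m(\xi)|^2 e^{-\xi^2/n^2}$ increases pointwise to $|\hat m(\xi)|^2$. On $\{|\xi|>1\}$, monotone convergence gives $\int_{\{|\xi|>1\}}|\hat{m^n}|^2|\xi|^{-1}\,d\xi\uparrow\int_{\{|\xi|>1\}}|\hat m|^2|\xi|^{-1}\,d\xi<\infty$ (finite because $E(m)>-\infty$). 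On $\{|\xi|\le1\}$, the elementary bound $1-|\hat{m^n}(\xi)|^2\le(1-|\hat m(\xi)|^2)+(1-e^{-\xi^2/n^2})\le(1-|\hat m(\xi)|^2)+\xi^2$, divided by $|\xi|$, is an $n$-independent integrable majorant (using the absolute integrability from the first point), so dominated convergence applies there. Adding the two pieces, $E(m^n)\to E(m)$.

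For the third point, let $m_n\rightharpoonup m$ narrowly, so $\hat{m_n}\to\hat m$ pointwise with $|\hat{m_n}|\le1$, and split $E(\mu)=\tfrac14\int_{\{|\xi|\le1\}}\tfrac{1-|\hat\mu(\xi)|^2}{|\xi|}\,d\xi-\tfrac14\int_{\{|\xi|>1\}}\tfrac{|\hat\mu(\xi)|^2}{|\xi|}\,d\xi-\gamma$. The high-frequency part is upper semi-continuous for free: $|\hat{m_n}(\xi)|^2|\xi|^{-1}\ge0$ converges pointwise, so Fatou gives $\int_{\{|\xi|>1\}}|\hat m|^2|\xi|^{-1}\,d\xi\le\liminf_n\int_{\{|\xi|>1\}}|\hat{m_n}|^2|\xi|^{-1}\,d\xi$, i.e. $\limsup_n(-\tfrac14\int_{\{|\xi|>1\}}|\hat{m_n}|^2|\xi|^{-1})\le-\tfrac14\int_{\{|\xi|>1\}}|\hat m|^2|\xi|^{-1}$. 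The low-frequency part is the delicate one and is where I expect the main obstacle: $\tfrac{1-|\hat{m_n}(\xi)|^2}{|\xi|}\ge0$ converges pointwise, so Fatou only produces the reverse inequality and one must genuinely pass the limit through, which calls for a dominating majorant — exactly where second-moment information enters. Using $1-|\hat{m_n}(\xi)|^2\le 2(1-\mathrm{Re}\,\hat{m_n}(\xi))=2\int(1-\cos(\xi x))\,m_n(dx)\le\xi^2\int x^2\,m_n(dx)$, one gets $\tfrac{1-|\hat{m_n}(\xi)|^2}{|\xi|}\le|\xi|\sup_n\int x^2\,m_n(dx)$ on $\{|\xi|\le1\}$, so dominated convergence closes the low-frequency part once $\sup_n\int x^2\,m_n(dx)<\infty$. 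Granting this uniform second-moment bound along the sequence — which is automatic for the tight families one meets in the study of the Dyson flow, equivalently for convergence in the Wasserstein-$2$ metric on $\mathcal{P}_2(\R)$ — summing the low- and high-frequency contributions yields $\limsup_n E(m_n)\le E(m)$, the claimed upper semi-continuity.
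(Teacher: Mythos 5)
Your treatment of the first two items is correct and follows essentially the same Fourier-analytic route the paper takes: for item one, the $C^2_b$ regularity of $\hat m$ controls the singularity of $|\xi|^{-1}$ near $\xi=0$ so that finiteness of $E$ reduces to the high-frequency tail, which is the $H^{-1/2}$ condition; for item two, the paper leaves the approximating sequence implicit and you have supplied a clean realization by Gaussian mollification, where the pointwise monotone increase of $|\widehat{m*g_n}|^2=|\hat m|^2e^{-\xi^2/n^2}$ makes the convergence of energies immediate.

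On the third item, you have located a genuine defect. The paper's one-line justification --- that $(x,y)\mapsto\log|x-y|$ is upper semi-continuous --- yields upper semi-continuity of $\mu\mapsto\iint\log|x-y|\,\mu(dx)\mu(dy)$ for the narrow topology only when the integrand is bounded above, or when its positive part is uniformly integrable along the sequence; $\log^+|x-y|$ is unbounded, and narrow convergence in $\mathcal{P}_2$ does not control second moments. In fact the claim as stated is false. Take $\mu$ the uniform distribution on $[0,1]$ and $m_n=(1-\tfrac{1}{\log n})\mu+\tfrac{1}{\log n}\,\mu(\cdot-n)$. Each $m_n\in\mathcal{P}_2(\R)$, $m_n\rightharpoonup\mu$ narrowly, yet expanding the double integral gives
\[
\iint\log|x-y|\,m_n\otimes m_n \;=\; \Bigl[(1-\tfrac1{\log n})^2+\tfrac1{(\log n)^2}\Bigr]\iint\log|x-y|\,\mu\otimes\mu + \tfrac{2}{\log n}\bigl(1-\tfrac1{\log n}\bigr)\bigl(\log n + O(1)\bigr),
\]
so $E(m_n)\to E(\mu)+1>E(\mu)$ and $\limsup_nE(m_n)>E(\mu)$. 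Your Fourier decomposition makes the missing hypothesis transparent: Fatou handles the high-frequency piece for free (and this part alone would prove that $-\int_{|\xi|>1}|\hat\mu|^2|\xi|^{-1}$ is u.s.c., i.e.\ weak lower semi-continuity of the homogeneous $H^{-1/2}$ tail), but the low-frequency integral $\int_{|\xi|\le1}(1-|\hat{m_n}|^2)|\xi|^{-1}$ needs a reverse inequality, hence a dominating majorant, and the natural one $|\xi|\sup_n\int x^2\,dm_n$ demands exactly the uniform second-moment bound you flag. The correct formulation is that $E$ is upper semi-continuous on subsets of $\mathcal{P}_2(\R)$ with uniformly bounded second moments, equivalently along $W_2$-convergent sequences; this is indeed what the paper uses in the subsequent Proposition (continuity of $t\mapsto E(m_t)$ along the Dyson flow, where second moments grow linearly and are thus locally bounded in time), but the statement and the paper's proof of item three both need this caveat added.
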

\begin{proof}
Since $m \in \Pt$, it follows that $\hat{m}$ belongs to $C^2_b(\R)$. Hence 
\[
 \left\langle f.p. \left(\frac{1}{|\xi|}\right), \hat{m}^2\right\rangle_{L^2} < \infty  \iff \|m\|_{H^{-\frac 12}}^2:=  \int_\R \frac{\hat{m}(\xi)^2}{\sqrt{1 + |\xi|^2}} < \infty.
\] 
This proves the first part of the claim. For the second part, it suffices to remark that the result is true if we can choose $(m^n)_{n \geq 0}$ such that 
\[
\frac 14 \left\langle f.p. \left(\frac{1}{|\xi|}\right), \hat{m^n}^2\right\rangle_{L^2} \quad\underset{n \to \infty}{\longrightarrow} \frac 14 \left\langle f.p. \left(\frac{1}{|\xi|}\right), \hat{m}^2\right\rangle_{L^2},
\]
which is true since $m\in \Pt$ and $E(m) > - \infty$. The third part of the claim is obtained by remarking that $(x,y) \to \log(|x-y|)$ is upper semi continuous.
\end{proof}
The first part of this result states that $E(\cdot)$ can indeed be used to measure some regularity while the second one shall be helpful later on. We now pass to the main result of this Section.
\begin{Prop}
Let $(m_t)_{t \geq 0}$ be a smooth solution of \eqref{dysoneq} valued in $\Pt$ such that $E(m_0) > - \infty$. Then for any $0 \leq t' \leq t$
\[
E(m_t) = E(m_{t'}) + \int_{t'}^t\int_\R H[m_{s}]^2m_s ds.
\]
\end{Prop}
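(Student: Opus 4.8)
The plan is to differentiate the free entropy along the flow and recognize the time derivative as the dissipation term. Since $m_t$ is assumed smooth and valued in $\Pt$, the expression $E(m_t) = \frac12\int_\R\int_\R \log(|x-y|)m_t(dx)m_t(dy)$ is well-defined and, thanks to $E(m_0) > -\infty$ together with the first part of Proposition \ref{prop:E}, we have $m_t \in H^{-\frac12}$ at least initially; we will need to propagate enough regularity forward in time to justify the computation, or alternatively work at the level of the Fourier representation $E(m) = -\frac14\langle f.p.(|\xi|^{-1}),\hat m^2\rangle - \gamma$. First I would compute, formally,
\[
\frac{d}{dt}E(m_t) = \int_\R \left(\int_\R \log(|x-y|)m_t(dy)\right)\partial_t m_t(dx) = \int_\R \nabla_m E(m_t,x)\,\partial_t m_t(x)\,dx,
\]
using the symmetry of the kernel to account for the factor $\frac12$. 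Then I would substitute the equation $\partial_t m_t = -\partial_x(m_tH[m_t])$ and integrate by parts in $x$:
\[
\frac{d}{dt}E(m_t) = \int_\R \partial_x\big(\nabla_m E(m_t,x)\big)\, m_t(x)H[m_t](x)\,dx = \int_\R H[m_t](x)^2 m_t(x)\,dx,
\]
since $\partial_x \nabla_m E(m_t,\cdot) = D_m E(m_t,\cdot) = H[m_t]$ as recorded just before Proposition \ref{prop:E}. Integrating this identity in time from $t'$ to $t$ yields the claim.

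The steps in order: (i) justify that $t\mapsto E(m_t)$ is absolutely continuous and differentiable, with the derivative obtained by differentiating under the double integral — this requires controlling the logarithmic singularity of the kernel near the diagonal and at infinity uniformly in $t$, for which the smoothness of $m_t$, the bound $\|m_t\|_\infty \leq t^{-1}$ from Proposition \ref{pro:reg} (after an initial time, or by the Lipschitz hypothesis near $t=0$), and the second-moment control inherited along the flow are the relevant ingredients; (ii) carry out the integration by parts in $x$, checking that the boundary terms at $\pm\infty$ vanish because $m_tH[m_t]\nabla_m E(m_t,\cdot)$ decays — here one uses that $m_t$ is a compactly integrable smooth density and $H[m_t]$ grows at most logarithmically, so the product against the log potential is integrable with vanishing tails; (iii) identify $\int_\R\partial_x(\nabla_m E)\,m_tH[m_t] = \int_\R H[m_t]^2 m_t$, which is the exact algebraic substitution $\partial_x\nabla_m E = H[m]$; (iv) integrate in time. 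A clean alternative for step (i) is to differentiate the Fourier-side formula: $\frac{d}{dt}E(m_t) = -\frac12\langle f.p.(|\xi|^{-1}),\hat m_t\,\partial_t\hat m_t\rangle$, with $\partial_t\hat m_t = -i\xi\widehat{m_tH[m_t]}$, and then use Plancherel together with the fact that multiplication by $|\xi|\,\mathrm{sgn}(\xi)$ corresponds to $-A_0 = -\frac{d}{dx}H$ at the spatial level — this essentially reproduces the $L^p$-decay computation done above with $p$ replaced by the $H^{-1/2}$ pairing, and may be the most transparent route.

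The main obstacle I expect is step (i)–(ii): rigorously justifying the differentiation under the integral sign and the vanishing of the boundary terms, since the kernel $\log|x-y|$ is neither bounded nor decaying, so one cannot simply invoke dominated convergence without an explicit uniform-in-$t$ control on the density's decay and on $H[m_t]$. The paper's stated strategy — prove the identity for smooth solutions and then pass to the limit by approximation, using $m_0 \in \Pt$ to get narrow continuity in time — suggests that the cleanest treatment is to first establish the identity for smooth, sufficiently decaying $m_t$ (where all manipulations are legitimate), obtaining $E(m_t) - E(m_{t'}) = \int_{t'}^t\int_\R H[m_s]^2 m_s\,ds$ as an honest calculus identity, and then to note that both sides are stable under the approximation scheme. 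The delicate point in the passage to the limit is the dissipation integral $\int_{t'}^t\int H[m_s]^2 m_s\,ds$: one must show it is lower/upper semicontinuous along the approximating sequence, which should follow by combining the upper semicontinuity of $E$ (third part of Proposition \ref{prop:E}) applied at both endpoints with Fatou-type arguments on the non-negative integrand $H[m_s]^2 m_s$.
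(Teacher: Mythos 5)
Your proposal follows essentially the same route as the paper: differentiate $E(m_t)$ along the flow, substitute the equation and integrate by parts so that $\partial_x\nabla_m E(m_t,\cdot)=D_m E(m_t,\cdot)=H[m_t]$ turns the derivative into $\int_\R H[m_t]^2 m_t\,dx$, and integrate in time, which is exactly the paper's computation for smooth solutions. The only divergence is in your closing remarks on the approximation (which go beyond the stated smooth case): the paper regularizes the logarithmic kernel ($\rho_\epsilon=\log*\tilde\rho_\epsilon$) and then the initial datum, using monotonicity plus upper semicontinuity of $E$ and the $L^\infty$ regularizing effect, rather than a Fatou/semicontinuity argument on the dissipation term, but this does not affect the stated proposition.
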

\begin{proof}
We start by proving the claim in the case of smooth solutions. We can compute
\be
\ba
\frac{d}{dt} E(m_t) &= \int_R \nabla_µE(m_t,x)\partial_tm(t,x)dx\\
& = \int_\R D_µE(m_t,x)H[m_t](x)m(t,x)dx.
\ea
\ee
From which we deduce
\[
 \frac 12 \int_\R \int_\R \log(|x-y|)m_t(dx)m_t(dy) =  \frac 12 \int_\R \int_\R \log(|x-y|)m_{t'}(dx)m_{t'}(dy) + \int_{t'}^t\int_\R H[m_{s}]^2m_s ds.
\]
It remains to justify the relation in the case of non smooth solutions. Let $m_0 \in  \Pt$ and assume first that $m_0 \in L^\infty$. Take a sequence $(\rho_\eps)_{\eps > 0}$ of smooth functions which converges in $L^p_{loc}$ toward $\log$, for any $p \in [1,\infty)$, which is of the form $\rho_\eps = \log * \tilde{\rho}_\eps$. It then follows that
\[
\ba
 \frac 12 \int_\R \int_\R \rho_\eps(|x-y|)m_t(dx)m_t(dy) =  \frac 12 \int_\R \int_\R \rho_\eps(|x-y|)m_{t'}(dx)m_{t'}(dy) + \int_{t'}^t\int_\R H[\tilde{\rho}_\eps*m_s]H[m_{s}]m_s ds.
 \ea
\]
Hence, the claim is proved for $m_0 \in L^\infty\cap\mathcal{P}_2$ if we can pass to the limit $\epsilon \to 0$ in the previous inequality. The passage to the limit is indeed valid in the first two terms from $m_t,m_{t'} \in L^\infty$ and the fact that $m_t,m_{t'} \in \mathcal{P}_2(\R)$. For the third terms, it suffices to recall that $H$ is continuous in any $L^p$, $1 < p < \infty$ and that $\tilde{\rho_\eps}*m_s\to m_s$ as $\eps \to 0$, uniformly in $s$, in $L^p$ for any $1< p < \infty$.\\

Consider now a general $m_0 \in \mathcal{P}_2(\R)$ with $E(m_0) > -\infty$ and a sequence $(m^n_0)$ as in Proposition \ref{prop:E}. For all $n \geq 0$, $t \geq 0$, we have
\[
E(m^n_t) = E(m^n_0) +\int_{0}^t\int_\R H[m^n_{s}]^2m^n_s ds.
\]
Passing to the limit $n \to \infty$, we deduce that for all $t \geq 0$
\[
E(m_t) -E(m_0) \geq \int_{0}^t\int_\R H[m_{s}]^2m_s ds.
\]
Hence, $(E(m_t))_{t \geq 0}$ is indeed increasing in time. Since it is upper semi continuous for the weak topology, we deduce that it is continuous at $t = 0$. On the other hand, thanks again to the $L^\infty$ regularizing effect, we have for any $0 < t' \leq t$.
\[
E(m_t) = E(m_{t'}) + \int_{t'}^t\int_\R H[m_{s}]^2m_s ds.
\]
Thus we finally obtain the result by passing to the limit $t' \to 0$.
\end{proof}

\begin{Rem}
In this case, the approximation is made on the $\log$ and not on the solution of the equation itself.
\end{Rem}
\begin{Cor}
Let $m_0 \in \Pt$ be such that $E(m_0) > -\infty$. Then $(E(m_t))_{t \geq 0}$ is a continuous increasing function.
\end{Cor}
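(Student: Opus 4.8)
The plan is to read the statement off the previous Proposition, since the Corollary is essentially a repackaging of its conclusions; what remains is to collect three facts, two of which are already in (or in the proof of) that Proposition.

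First I would record that $E$ is finite, i.e. $E(m) < +\infty$, for every $m \in \Pt$: from $\log|x-y| \le |x-y| \le |x| + |y|$ one obtains
\[
E(m) \le \int_\R |x|\, m(dx) < \infty,
\]
because $\Pt \subset \mathcal{P}_1(\R)$. Together with the hypothesis $E(m_0) > -\infty$ and the monotonicity $E(m_t) \ge E(m_0)$ established in the proof of the previous Proposition, this gives $E(m_t) \in \R$ for every $t \ge 0$.

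Next, for monotonicity on $[0,\infty)$ I would invoke the identity of the previous Proposition, valid for all $0 \le t' \le t$, namely $E(m_t) = E(m_{t'}) + \int_{t'}^t \int_\R H[m_s]^2 m_s\, ds$, whose integrand is non-negative since $m_s \ge 0$; hence $t \mapsto E(m_t)$ is non-decreasing. For continuity on $(0,\infty)$, I would fix $T > 0$ and use the identity with $t' = 0$, $t = T$ to get $\int_0^T \int_\R H[m_s]^2 m_s\, ds = E(m_T) - E(m_0) < \infty$; thus $s \mapsto \int_\R H[m_s]^2 m_s$ lies in $L^1(0,T)$, so $t \mapsto \int_0^t \int_\R H[m_s]^2 m_s\, ds$ is absolutely continuous on $[0,T]$, and by the identity so is $t \mapsto E(m_t)$. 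Since $T$ is arbitrary this yields continuity on $(0,\infty)$, while continuity at $t = 0$ is exactly what was already derived in the proof of the Proposition, combining the $\liminf$ bound coming from monotonicity with the upper semi-continuity of $E$ (Proposition \ref{prop:E}) and the narrow continuity of $t \mapsto m_t$, which holds because $m_0 \in \Pt$.

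Because the Corollary is a bookkeeping consequence of the preceding result, there is no real obstacle here; the only point deserving a line of justification is the finiteness of the dissipation integral on every bounded time interval, which makes the continuity claim on $(0,\infty)$ non-vacuous and which rests precisely on the finiteness of $E$ over $\Pt$ together with the monotonicity just recalled.
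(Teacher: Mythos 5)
Your argument is correct and follows essentially the same route as the paper, which offers no separate proof of the Corollary: it is read off the preceding Proposition, with monotonicity coming from the non-negative dissipation term in the identity $E(m_t)=E(m_{t'})+\int_{t'}^t\int_\R H[m_s]^2 m_s\,ds$ and continuity (including at $t=0$) from that identity together with the upper semi-continuity argument already carried out there. Your extra observation that $E(m)\le \int_\R |x|\,m(dx)<+\infty$ on $\Pt$, which makes the dissipation integral finite on bounded intervals and hence the map absolutely continuous on $(0,\infty)$, is a detail the paper leaves implicit and is a welcome addition rather than a different approach.
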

\begin{Rem}
Let us insist upon the fact that the continuity was not obtained for the $L^p$ norm in the previous subsection.
\end{Rem}

\subsection{Contraction in the Wasserstein space}
We also recall the following result, of which we provide a new proof in the case $p=2$.
\begin{Prop}\label{prop:decreas}
Let $1 \leq p \leq \infty$, and $(µ_t)_{t \geq 0}$, $(\nu_t)_{t \geq 0}$ be two solutions of \eqref{dyson} valued in $\mathcal{P}_p$, then for any $t \leq s$, $W_p(µ_s,\nu_s) \leq W_p(\mu_t,\nu_t)$.
\end{Prop}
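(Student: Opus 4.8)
The plan is to recast \eqref{dysoneq} in Lagrangian (quantile) coordinates, in which the claimed monotonicity of $t\mapsto W_p(\mu_t,\nu_t)$ becomes a pointwise algebraic positivity, and then to reach general $\mathcal P_p$‑valued solutions by approximation. First I would reduce to the case where $\mu$ and $\nu$ are smooth enough to be differentiated below. Given $\mu_0,\nu_0\in\mathcal P_p$ one approximates them by probability measures with smooth bounded densities converging in $W_p$; on a time interval $[\tau,T]$ (where the $L^\infty$‑regularizing effect $\|\mu_t\|_\infty\le t^{-1}$ established above provides enough regularity), one runs the argument for the corresponding solutions, then passes to the limit using the narrow stability of solutions of \eqref{dyson} together with the continuity of $(\mu,\nu)\mapsto W_p(\mu,\nu)$ on $\mathcal P_p\times\mathcal P_p$; finally one lets $\tau\to0$ using that $t\mapsto\mu_t$ is $W_p$‑continuous, itself a consequence of narrow continuity and of the $p$‑th moment staying finite locally in time (an elementary estimate, which also shows $\mathcal P_p$ is preserved by the flow). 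The case $p=\infty$ is reduced to finite $p$ via $W_\infty(\mu,\nu)=\lim_{p\to\infty}W_p(\mu,\nu)$, valid for compactly supported measures.

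\textbf{Lagrangian coordinates.} For a smooth solution, write $u(t,x)=\mu_t((-\infty,x])$ and let $X_t=u(t,\cdot)^{-1}:(0,1)\to\R$ be the associated quantile function. Since the characteristics of $\partial_t u+A_0[u]\,\partial_x u=0$ preserve the level sets of $u$, the map $X_t$ solves the singular ODE
\[
\dot X_t(\theta)=\mathrm{p.v.}\!\int_0^1\frac{d\theta'}{X_t(\theta)-X_t(\theta')},
\]
and the repulsive sign of the interaction keeps $\theta\mapsto X_t(\theta)$ strictly increasing; the same holds for $Y_t$, the quantile function of $\nu_t$, and in particular $X_t$ and $Y_t$ are comonotone. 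Because in dimension one the comonotone coupling is optimal for every $W_p$, $1\le p\le\infty$, we have $W_p(\mu_t,\nu_t)^p=\int_0^1|X_t(\theta)-Y_t(\theta)|^p\,d\theta$ for $p<\infty$.

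\textbf{The positivity computation.} Set $d(\theta):=X_t(\theta)-Y_t(\theta)$. For any nondecreasing $\phi$ with primitive $\Psi$, differentiating in time, inserting the ODE for $\dot X_t-\dot Y_t$, and symmetrising in $\theta\leftrightarrow\theta'$ (using that $(\theta,\theta')\mapsto\frac{1}{X_t(\theta)-X_t(\theta')}-\frac{1}{Y_t(\theta)-Y_t(\theta')}$ is antisymmetric), one gets
\[
\frac{d}{dt}\int_0^1\Psi(d(\theta))\,d\theta
=\frac12\,\mathrm{p.v.}\!\int_0^1\!\!\int_0^1\big[\phi(d(\theta))-\phi(d(\theta'))\big]\Big[\tfrac{1}{X_t(\theta)-X_t(\theta')}-\tfrac{1}{Y_t(\theta)-Y_t(\theta')}\Big]d\theta'\,d\theta.
\]
Using the identity $\tfrac{1}{X_t(\theta)-X_t(\theta')}-\tfrac{1}{Y_t(\theta)-Y_t(\theta')}=-\dfrac{d(\theta)-d(\theta')}{\big(X_t(\theta)-X_t(\theta')\big)\big(Y_t(\theta)-Y_t(\theta')\big)}$, this becomes
\[
\frac{d}{dt}\int_0^1\Psi(d(\theta))\,d\theta=-\frac12\,\mathrm{p.v.}\!\int_0^1\!\!\int_0^1\frac{\big[\phi(d(\theta))-\phi(d(\theta'))\big]\big(d(\theta)-d(\theta')\big)}{\big(X_t(\theta)-X_t(\theta')\big)\big(Y_t(\theta)-Y_t(\theta')\big)}\,d\theta'\,d\theta\ \le\ 0,
\]
because the numerator is nonnegative by monotonicity of $\phi$ and the denominator is positive by comonotonicity of $X_t,Y_t$ (its sign is that of $(\theta-\theta')^2$). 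Taking $\Psi(z)=|z|^p$, the symmetrised integrand is in fact absolutely integrable (this is where $p>1$ is used; for $p=1$ one first smooths $z\mapsto|z|$ by convex functions with nondecreasing derivative and passes to the limit), so no principal value is actually needed there. This gives $W_p(\mu_s,\nu_s)\le W_p(\mu_t,\nu_t)$ for $1\le p<\infty$ — the instance $p=2$, where $\phi=\mathrm{id}$, being the most transparent — and $p=\infty$ follows by the limit recalled above.

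\textbf{Main obstacle and an alternative route.} The step I expect to be delicate is making the Lagrangian formulation rigorous for the approximating solutions: one must justify that a smooth solution of \eqref{dysoneq} genuinely corresponds to a flow $\theta\mapsto X_t(\theta)$ of the singular ODE, that strict $\theta$‑monotonicity propagates, and that $X_t$ is regular enough in $\theta$, uniformly on $[\tau,T]$, to differentiate $\int_0^1\Psi(d(\theta))\,d\theta$ and exchange $\tfrac{d}{dt}$ with the integral — all of which follow from the method of characteristics combined with the quantitative regularity already established ($\partial_x u>0$, controlled via $\|\mu_t\|_\infty\le t^{-1}$), but require care. An alternative that bypasses this is to couple two copies of the $N$‑particle system \eqref{dysonN} driven by the same Brownian motions: in $\delta_i(t)=\lambda^N_{i,t}-\tilde\lambda^N_{i,t}$ the noise cancels, the ordering is preserved, and the identical symmetrisation gives $\tfrac{d}{dt}\tfrac1N\sum_i|\delta_i(t)|^p\le0$ pathwise, which is precisely $\tfrac{d}{dt}W_p^p$ of the two (comonotone) empirical measures; one then concludes by the convergence recalled in Section \ref{sec:notation}, upgraded from narrow to $W_p$ convergence by a uniform‑in‑$N$ moment bound — the latter being the price to pay on this route.
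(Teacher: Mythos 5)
Your proposal rests on the same core mechanism as the paper's proof — take the optimal (comonotone) coupling, run the Lagrangian flow, and use the sign of the interaction kernel after a $\theta\leftrightarrow\theta'$ symmetrization — but packages it differently and carries it further. The paper represents the coupling by random variables $(X_0,Y_0)$ on a probability space (an arbitrary $W_2$‑optimal coupling at time $0$), evolves them by $\dot X_t=H[\mathcal L(X_t)](X_t)$, and only works out the case $p=2$, remarking that other exponents could be handled similarly; you parametrize by quantile functions $\theta\mapsto X_t(\theta)$, note that comonotonicity (hence optimality) is preserved in one dimension, and give the argument uniformly in $1\le p\le\infty$, with the $p=\infty$ case obtained as a limit. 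Your algebra is also tighter: symmetrizing first and applying the identity $\tfrac{1}{X-X'}-\tfrac{1}{Y-Y'}=-\tfrac{d-d'}{(X-X')(Y-Y')}$ makes the integrand manifestly non‑positive, whereas the paper inserts Young's inequality $2(x-y)(x'-y')\le(x-y)^2+(x'-y')^2$ and then kills the residual term by antisymmetry — equivalent, but yours is one step shorter. Both proofs face the same technical debt (well‑posedness of the Lagrangian ODE, differentiation under the integral, passage to the limit in the approximation); the paper discharges part of it with Cotlar's identity, showing $H[m]\in L^2(m)$ once $m\in L^3$, plus the $L^\infty$‑regularizing effect, while you rely on the regularizing effect alone and candidly flag the flow‑regularity issue as the delicate point. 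Finally, your alternative route by coupling two copies of the $N$‑particle system \eqref{dysonN} with the same Brownian motions is genuinely not in the paper; it is more elementary at the ODE level but, as you correctly identify, requires a uniform‑in‑$N$ $p$‑th moment bound to upgrade the empirical‑measure convergence from narrow to $W_p$, which is where the work gets shifted.
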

We start by recalling Cotlar's identity for smooth functions $u : \R \to \R$,
\[
H[u]^2 = \pi^2u^2 + 2H[uH[u]].
\]
Multiplying by $u$ and integrating, we obtain
\[
\ba
\int_\R H[u]^2 u &= \pi^2\int_\R u^3 + 2 \int_\R H[u]^2 u\\
& = \frac{\pi^2}{3} \int_\R u^3.
\ea
\]

In particular, for a probability measure $m \in \mathcal{P}$, $H[m] \in L^2(m)$ if and only if $m \in L^3$.

\begin{proof}
We start with the case in which $(µ_t)_{t \geq 0}$ and $(\nu_t)_{t \geq 0}$ are valued in $L^3$. Consider an optimal coupling $\gamma$ between $µ_0$ and $\nu_0$ for $2$-Wasserstein distance and $(X_0,Y_0)$ a couple of real random variables on a standard probability space such that $\mathcal{L}((X_0,Y_0)) = \gamma$. Because the processes are valued in $L^3$, we deduce from Cotlar's identity that $H[\mathcal{L}(X_0)](X_0)$ is a squared integrable random variable. Consider now the ODEs
\[
\frac{d}{dt} X_t = H[\mathcal{L}(X_t)](X_t) \quad ; \quad \frac{d}{dt} Y_t = H[\mathcal{L}(Y_t)](Y_t).
\]
By construction, for any $t \geq 0$, $\mathcal{L}(X_t) = µ_t$ and $\mathcal{L}(Y_t) = \nu_t$. It then follows that for all $t \geq 0$
\[
\ba
\frac{d}{dt}W_2^2(µ_t,\nu_t)|_{t = 0} \leq \frac{d}{dt}\mathbb{E}[|X_t - Y_t|^2]|_{t = 0}.
\ea
\]
We now compute
\[
\ba
\frac12\frac{d}{dt}\mathbb{E}[|X_t - Y_t|^2]|_{t = 0} &= \mathbb{E}[(X_0-Y_0)(H[\mathcal{L}(X_0)](X_0) - H[\mathcal{L}(Y_0)](Y_0)]\\
& = \int_\R\int_\R\int_\R\int_\R (x-y)\left(\frac{1}{x-x'} - \frac{1}{y -y'}\right)\gamma(dx,dy)\gamma(dx',dy')\\
& = \int_\R\int_\R\int_\R\int_\R \frac{-(x-y)^2 + (x-y)(x'-y')}{(x-x')(y-y')} \gamma(dx,dy)\gamma(dx',dy')
\ea
\]
Recall that since $\gamma$ is an optimal coupling, $(x-x')(y-y') \geq 0$ holds $\gamma\times \gamma$ almost everywhere. Using $2(x-x')(y-y') \leq (x-y)^2 + (x'-y')^2$, we obtain that
\[
\ba
\frac 12\frac{d}{dt}\mathbb{E}[|X_t - Y_t|^2]|_{t = 0} &\leq \frac 12\int_\R\int_\R\int_\R\int_\R \frac{(x'-y')^2-(x-y)^2 }{(x-x')(y-y')} \gamma(dx,dy)\gamma(dx',dy').
\ea
\]
The term on the right side vanishes by symmetry, hence $\frac{d}{dt}\mathbb{E}[|X_t - Y_t|^2]|_{t = 0} \leq 0$ from which the result follows since the same argument can be made for any time.\\

In the general case, thanks to the $L^\infty$ regularizing property, we deduce that for any $\epsilon > 0$ such that $t+ \epsilon \leq s$, we have $W_2(µ_s,\nu_s) \leq W_2(µ_{t+\eps},\nu_{t + \eps})$. Hence, by continuity of the Dyson flow in $(\Pt,W_2)$, we obtain the required result.
\end{proof}

\begin{Rem}
A similar approach could have been used in the case $p \ne 2$ but we do not present it here.
\end{Rem}

\section{Complements on modeling}\label{sec:model}
We now present various independent extensions of the Dyson model.
\subsection{A bulk of eigenvalues interacting with spikes}
If the $N\times N$ matrix from which derives the spectral measure $m$ is perturbed by a matrix of rank one, it has been observed in \citep{baik}, that we can identify an outlier of the spectrum. That is an eigenvalue which corresponds to the perturbation. We explain the effect of a similar perturbation in this dynamical setting.
Consider that the usual $N\times N$ Dyson model is perturbed by the addition of the matrix $a_t e_1\otimes e_1$, for $(a_t)_{t \geq 0}$ is a given smooth process. Since the rank of this perturbation is only one, it does not alter the limit spectral measure, which is then given by the usual Dyson equation. However, if we can identify the "outlier" associated to this perturbation, its evolution shall naturally depend on the rest of the spectral measure. This leads us to the system
\be\label{spike}
\ba
d\lambda_t& = H[m_t](\lambda_t) dt + da_t,\\
\partial_t m& + \partial_x(mH[m])  = 0 \text{ in } (0,T)\times \R,\\
\lambda_0& = a_0 +H[m_0](\lambda_0), \quad m|_{t = 0} = m_0.
\ea
\ee
Note that the main mathematical difficulty in this system lies in the fact that we do not know in general if $H[m_t]$ is sufficiently smooth to characterize $(\lambda_t)_{t \geq 0}$ as the unique solution of the first equation. Indeed, even using the regularity proved in \citep{biane1997free}, we would only have a H\"older estimate on $H[m_t]$ which would give existence but not necessary uniqueness of such a solution.\\

A case which is of particular importance in our opinion is the one in which the special eigenvalue $\lambda$ is greater than all the other one, more specifically when $m_t$ is supported on a set of the form $(-\infty,R_t]$ for some $R_t \in \R$ and when $\lambda_t > R_t$. Indeed, in such a context, $H[m_t]$ is locally Lipschitz and decreasing around $\lambda_t$ as the next elementary result states.
\begin{Lemma}
Let $m \in \mathcal{P}(\R)$ be supported on $(-\infty,R_0]$. Then, $H[m]$ is decreasing on $[R_0,\infty)$ and it is $C$ Lipschitz continuous on $[R_0+\alpha,\infty)$ and $C$ depends only $\alpha > 0$.
\end{Lemma}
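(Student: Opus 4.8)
The plan is to compute the Hilbert transform explicitly for $x > R_0$ and read off monotonicity and Lipschitz bounds from the resulting integral. Recall that for $x$ outside the support of $m$ the principal value is not needed, so
\[
H[m](x) = \int_{(-\infty,R_0]} \frac{m(dy) - 0}{x-y} \,(\text{interpreted as}) \;= \int_{(-\infty,R_0]} \frac{1}{x-y}\, m(dy),
\]
using the representation $H[m] = p.v.(\tfrac1x)*m$ and the fact that $\frac{1}{x-\cdot}$ is continuous and bounded on the support when $x > R_0$. (One should be slightly careful with the sign convention for $H$ fixed in Section \ref{sec:notation}; the argument is insensitive to it, only the direction of monotonicity flips, so I would first pin down that on $(R_0,\infty)$ the relevant kernel is $y\mapsto (x-y)^{-1}$, which is positive, decreasing in $x$, and smooth.)

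First I would establish monotonicity: for $R_0 \le x_1 < x_2$ and every $y \le R_0$ one has $\frac{1}{x_1-y} > \frac{1}{x_2-y} > 0$, since $x\mapsto (x-y)^{-1}$ is strictly decreasing on $(y,\infty)$. Integrating against the probability measure $m$ preserves the strict inequality (the measure is not the zero measure), giving $H[m](x_1) > H[m](x_2)$, i.e.\ $H[m]$ is decreasing on $[R_0,\infty)$. Second, for the Lipschitz bound on $[R_0+\alpha,\infty)$, I would differentiate under the integral sign:
\[
\frac{d}{dx} H[m](x) = -\int_{(-\infty,R_0]} \frac{1}{(x-y)^2}\, m(dy),
\]
which is justified since for $x \ge R_0 + \alpha$ and $y \le R_0$ we have $x - y \ge \alpha$, so the integrand and its $x$-derivative are dominated by $\alpha^{-2}$ and $2\alpha^{-3}$ respectively, uniformly. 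Hence $|(H[m])'(x)| \le \alpha^{-2}\, m((-\infty,R_0]) = \alpha^{-2}$ for all $x \ge R_0 + \alpha$, so $H[m]$ is $C$-Lipschitz on $[R_0+\alpha,\infty)$ with $C = \alpha^{-2}$, depending only on $\alpha$.

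There is no real obstacle here — the statement is elementary once one observes that moving outside the support removes the principal-value singularity. The only point requiring a line of care is the differentiation under the integral sign and the sign/normalization convention for $H$; both are routine, handled by the uniform domination $|x-y|\ge \alpha$ on the relevant region. I would write the proof in three short sentences: (i) the integral representation of $H[m]$ valid off the support; (ii) strict monotonicity of the kernel in $x$ yielding the decrease; (iii) the dominated differentiation yielding the Lipschitz constant $\alpha^{-2}$.
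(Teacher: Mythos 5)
Your proof is correct and follows essentially the same route as the paper. The paper handles monotonicity and the Lipschitz bound in one stroke via the finite-difference identity $H[m](\lambda)-H[m](\lambda')=(\lambda'-\lambda)\int_\R \frac{m(dx)}{(\lambda-x)(\lambda'-x)}$, whereas you separate them (monotone kernel for the decrease, dominated differentiation for the Lipschitz bound); this avoids even the mild justification of differentiation under the integral sign, but the substance and the resulting constant $\alpha^{-2}$ are identical.
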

\begin{proof}
Let $m$ be such a measure. Take $\lambda\geq \lambda' \geq R_0$ and compute
\[
\ba
H[m](\lambda) - H[m](\lambda') = (\lambda' - \lambda)\int_\R\frac{1}{(\lambda-x)(\lambda'-x)}m(dx).
\ea
\]
Hence the result follows since for all $x$ in the support of $m$, $x \leq R_0 \leq \lambda'\leq \lambda$.
\end{proof}

This implies that if we can guarantee that $\lambda$ stays strictly outside the support of $m$, then the ODE satisfied by $\lambda$ is well-posed under standard assumptions on $(a_t)_{t \geq 0}$. Note that in several situations, this can be checked by an explicit computation. Indeed, consider for instance the case in which $m_0$ is supported on $(-\infty,R_0]$. Then, thanks to Proposition 4.4 in \citep{bertucci2022spectral}, we can compare the cumulative distribution functions of the solutions of \eqref{dysoneq} associated to $m_0$ and $\delta_{R_0}$. The latter is given by the semi-circular law. Hence we know that for all $t \geq 0$, $m_t$ is supported on $(-\infty,R_0 + \sqrt{t}]$. Hence, if $\lambda_0 > R_0$ and $da_t \geq \frac{1}{2\sqrt{t}}$, we know that for all $t > 0$, $\lambda_t > R_t$.

An interesting feature is that in general, the fact that $(\lambda_t)_{t \geq 0}$ stays outside of the support of $m_t$ does not simply follow from the fact that this holds at the initial time as the next result shows.
\begin{Prop}
Consider $m_0 = \delta_0$ and $\lambda_0 > 0$ and assume that $(a_t)_{t \geq 0}$ is constant with $a_0 > 0$. The system \eqref{spike} is locally well posed in time and there exists $t_0 > 0$ such that $\lambda_{t_0} = \sqrt{t_0}$.
\end{Prop}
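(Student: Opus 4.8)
The plan is to exploit that, for $m_0=\delta_0$, the measure $m_t$ solving \eqref{dysoneq} is explicit: it is the semicircle law centered at $0$ with support $[-R_t,R_t]$, where $R_t$ is the right edge discussed above (so $R_t=\sqrt t$, up to the normalization of $H$). Consequently, for $\lambda>R_t$ the Cauchy transform $H[m_t](\lambda)=\int_\R\frac{m_t(dy)}{\lambda-y}$ has a closed form; writing it as
\[
H[m_t](\lambda)=\frac{1}{2t}\bigl(\lambda-\sqrt{\lambda^2-t}\,\bigr),\qquad \lambda\ge R_t,
\]
one checks it is $C^\infty$ and strictly decreasing in $\lambda$ on $\{\lambda>R_t\}$, depends smoothly on $t$, and extends continuously to $\{\lambda=R_t\}$ with $H[m_t](R_t)=\dot R_t$ (the local Lipschitz character away from the edge is exactly the content of the Lemma preceding the statement). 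Since $(a_t)_{t\ge0}$ is constant, the first line of \eqref{spike} reduces to the scalar ODE $\dot\lambda_t=H[m_t](\lambda_t)$, and the initial condition of \eqref{spike} is the fixed-point relation $\lambda_0=a_0+H[\delta_0](\lambda_0)=a_0+\lambda_0^{-1}$; its unique positive root is $\lambda_0=\tfrac12(a_0+\sqrt{a_0^2+4})$, and since $a_0>0$ we have $\lambda_0>1>0=R_0$, i.e. the spike starts strictly to the right of $\mathrm{supp}\,m_0$.

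For the local well-posedness, the measure $m$ is already the unique viscosity solution of \eqref{dysoneq}, so only the ODE for $\lambda$ has to be solved. As $R_t\to0$ when $t\to0$ and $\lambda_0>1$, there is $\tau>0$ such that every solution of $\dot\lambda_t=H[m_t](\lambda_t)$ issued from $\lambda_0$ stays in $\{\lambda\ge\lambda_0/2\}\subset\{\lambda>R_t\}$ on $[0,\tau]$, a region on which $(t,\lambda)\mapsto H[m_t](\lambda)$ is Lipschitz in $\lambda$ locally uniformly in $t$; the Cauchy--Lipschitz theorem then gives a unique solution on $[0,\tau]$, and more generally the solution extends uniquely as long as $\lambda_t>R_t$ (it cannot blow up, since $\dot\lambda_t=H[m_t](\lambda_t)\le H[m_t](R_t)=\dot R_t$). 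This proves that \eqref{spike} is locally well posed.

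It remains to show the spike is absorbed in finite time. Put $f(t):=\lambda_t^2-R_t^2=\lambda_t^2-t$, so that $f(0)=\lambda_0^2>0$ and $f>0$ on the maximal interval $[0,t_0)$ on which $\lambda_t>R_t$; note $t_0>0$ because $\lambda_0>R_0$ strictly and $t\mapsto\lambda_t-R_t$ is continuous. Differentiating, using the closed form of $H[m_t]$ and the identity $\lambda_t-\sqrt{f(t)}=\lambda_t-\sqrt{\lambda_t^2-t}=t\,(\lambda_t+\sqrt{f(t)})^{-1}$, one gets the clean relation
\[
\dot f(t)=-\frac{\sqrt{f(t)}\,\bigl(\lambda_t-\sqrt{f(t)}\,\bigr)}{t}=-\frac{\sqrt{f(t)}}{\lambda_t+\sqrt{f(t)}}\ \le\ -\frac{\sqrt{f(t)}}{2\lambda_t},
\]
the last step using $\sqrt{f(t)}\le\lambda_t$. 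In particular $\dot f\le0$, hence $\lambda_t^2=f(t)+t\le\lambda_0^2+t$, whence
\[
\frac{d}{dt}\sqrt{f(t)}=\frac{\dot f(t)}{2\sqrt{f(t)}}\ \le\ -\frac{1}{4\sqrt{\lambda_0^2+t}}.
\]
Integrating yields $\sqrt{f(t)}\le\tfrac32\lambda_0-\tfrac12\sqrt{\lambda_0^2+t}$, whose right-hand side is negative for $t>8\lambda_0^2$. Since $\sqrt{f(t)}\ge0$ wherever defined, this forces $t_0\le 8\lambda_0^2<\infty$ with $f(t_0)=0$. On $[0,t_0)$ the trajectory stays in the compact set $\{(t,\lambda):0\le t\le t_0,\ R_t\le\lambda\le\sqrt{\lambda_0^2+t}\}$, where the vector field is bounded and continuous, so $\lambda$ extends continuously to $t=t_0$, and by definition of $t_0$ and continuity, $\lambda_{t_0}=R_{t_0}=\sqrt{t_0}$, which is the assertion.

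The crux is the finite- versus infinite-time dichotomy in the last step: a priori the gap $f(t)$ could tend to $0$ only as $t\to\infty$ (as for $\dot f=-f$). What prevents this is the precise semicircular form of $m_t$, which makes $\dot f$ comparable to $-\sqrt{f}/\sqrt t$, and not to $-f$, near the edge; the weight $t^{-1/2}$ is integrable at the origin but has an unbounded primitive, and this is exactly what forces $f$ to reach $0$ in finite time. A secondary, routine point is that $H[m_t](\lambda)$ is only H\"older-$\tfrac12$ at $\lambda=R_t$, which is why one asserts local well-posedness of \eqref{spike} together with the crossing, rather than global well-posedness past $t_0$.
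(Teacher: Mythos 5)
Your proposal follows the same route as the paper: introduce $f(t)=\lambda_t^2-t$ (the paper's $Z_t$), plug in the explicit Stieltjes transform of the semicircle law, and show $f$ is strictly decreasing and must reach $0$ in finite time. The computations agree with the paper's line
\[
\frac{d}{dt}Z_t=\frac{Z_t-\sqrt{Z_t+t}\,\sqrt{Z_t}}{t},
\]
and the conclusion is the same.

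Where you genuinely improve on the paper is the final step. The paper concludes with the heuristic that ``as it gets closer to $0$, the previous ODE has a similar behaviour as $\dot X\le\sqrt{X}\,\sqrt{t}^{-1}$, which reaches $0$ in finite time,'' without deriving a differential inequality that actually holds uniformly. You instead use the algebraic identity $\lambda_t-\sqrt{f}=t/(\lambda_t+\sqrt{f})$ to rewrite $\dot f$ in the clean closed form $\dot f=-\sqrt{f}/(\lambda_t+\sqrt{f})$, use the monotonicity of $f$ to obtain the a priori bound $\lambda_t\le\sqrt{\lambda_0^2+t}$, and integrate the resulting differential inequality for $\sqrt f$ to get the quantitative bound $t_0\le8\lambda_0^2$. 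This replaces the paper's ``cannot have a positive limit'' argument and its asymptotic comparison near $Z=0$ by a single global estimate, and it is strictly more rigorous. You also make the local well-posedness part explicit (the paper dismisses it in one sentence) and correctly note that $H[m_t]$ is only H\"older-$\tfrac12$ at the edge, so uniqueness is only guaranteed while $\lambda_t>R_t$. One minor remark: the statement of the Proposition lists $\lambda_0>0$ as an assumption, so you do not actually need to solve the fixed-point equation $\lambda_0=a_0+1/\lambda_0$ to deduce $\lambda_0>1$; it is a nice consistency check but not logically required for the proof.
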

\begin{proof}
Since $\lambda_0 > 0$ the system is well posed at least locally in time. Introduce $Z_t := \lambda_t^2-t$. We want to show that there exists $t_0$ such that $Z_{t_0} = 0$. Since $m_0$ is Dirac mass, the solution of the Dyson equation is given by $m_t(dx) = \frac{2}{\pi t}\sqrt{t - x^2}\mathbb{1}_{[-\sqrt{t},\sqrt{t}]}(x)dx$ and thus for any $x > \sqrt{t}$
\[
H[m_t](x) = \frac{x - \sqrt{x^2 -t}}{2t}.
\]
Hence we deduce that 
\[
\ba
\frac{d}{dt} Z_t &= \lambda_t \frac{\lambda_t - \sqrt{\lambda_t^2 -t}}{t} -1\\
&= \frac{ Z_t - \lambda_t \sqrt{Z_t}}{t}\\
&= \frac{Z_t - \sqrt{Z_t + t}\sqrt{Z_t}}{t}
\ea
\]
Hence $(Z_t)$ is decreasing, while it is positive. Remark that it cannot have a positive limit. Moreover, as it gets closer to $0$, the previous ODE as a similar behaviour as $\dot{X} \leq \sqrt{X}\sqrt{t}^{-1}$, which reaches $0$ in finite time, hence the result.
\end{proof}

Note that a quite similar question, which is addressed in more details, in the case of large but finite matrix, was studied in \citep{dubach}. However, the authors considered that the "bulk" is constant in time in their model.

\subsection{An extension to other non-linearities}
Consider the equation 
\be
\partial_t m + \partial_x (m\sigma(m)H[m]) = 0 \text{ in } (0,\infty)\times \R,
\ee
where $\sigma : \R_+ \to \R_+$ is a continuous function. Then we claim than an analysis of this equation by means of the viscosity solutions of the PDE
\be\label{eq:dysonFsigma}
\partial_t u + \sigma(\partial_x u)\partial_xuH[\partial_x u] = 0 \text{ in } (0,\infty)\times \R
\ee
can be done following the lines of \citep{bertucci2022spectral}. Indeed, the function $\sigma$ does not perturb the proof of the comparison of a viscosity super-solution and a viscosity sub-solution. Note that this is due in large part to the fact that, since we are considering here the exact Hilbert transform, and not some other singular operators, we do not need to show that the Lipschitz regularity propagates in \eqref{eq:dysonFsigma}. We can provide the following result.
\begin{Prop}
Let $u_1$ be a viscosity super-solution of \eqref{eq:dysonFsigma} and $u_2$ a viscosity sub-solution of the same equation. Assume moreover that both are non-decreasing in $x$. Then if for all $x \in \R$, $u_1(0,x) \geq u_2(0,x)$, it holds that for all $t \geq 0,x \in \R$, $u_1(t,x) \geq u_2(t,x)$. In particular, there exists at most one viscosity solution of \eqref{eq:dysonFsigma} given an initial condition.
\end{Prop}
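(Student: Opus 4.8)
The plan is to reproduce, almost verbatim, the doubling‑of‑variables proof of the comparison principle of part~I (Proposition~4.4 in \citep{bertucci2022spectral}); equivalently, one follows the argument in the proof of Theorem~\ref{thm:gronwallvisc} above, with $B\equiv 0$ and with the spatial coefficient $c(x)$ replaced by the gradient nonlinearity $\sigma(\partial_x u)$ (recall $H[\partial_x u]=A_0[u]$, so \eqref{eq:dysonFsigma} reads $\partial_t u+\sigma(\partial_x u)\,\partial_x u\,A_0[u]=0$). The only genuinely new point to check is that this replacement does not damage any of the estimates, and I will isolate why.

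Suppose $u_1(t_0,x_0)<u_2(t_0,x_0)$ somewhere. I would then add the localizing penalisation $-\,r\psi(x)$ with $\psi(x)=(x^2+1)^{1/3}$ and a term $-\gamma(t+s)$, and get for small $\epsilon,r>0$ an interior maximum point $(t_*,s_*,x_*,y_*)$, with strictly positive value, of $u_2(t,x)-u_1(s,y)-\tfrac{1}{2\epsilon}((x-y)^2+(t-s)^2)-r\psi(x)-\gamma(t+s)$; the cases $t_*=0$ or $s_*=0$ are ruled out by $u_1(0,\cdot)\ge u_2(0,\cdot)$ and continuity up to $t=0$, as usual. Writing the sub‑solution inequality for $u_2$ at $(t_*,x_*)$ and the super‑solution inequality for $u_1$ at $(s_*,y_*)$, each with a splitting parameter $\delta>0$ (so $A_0$ is decomposed as $A_{-\delta}$ on the penalised quadratic and $A_{\delta}$ on $u_i$), and subtracting, the $\tfrac{t_*-s_*}{\epsilon}$ terms cancel and the $\gamma$ terms give $+2\gamma$; one is left with
\[
2\gamma+\sigma(p_x)\,p_x\,S_1-\sigma(p_y)\,p_y\,S_2\le 0,
\]
where $p_x=\tfrac{x_*-y_*}{\epsilon}+r\psi'(x_*)$ and $p_y=\tfrac{x_*-y_*}{\epsilon}$ are the test gradients seen by $u_2$ and $u_1$, and $S_1,S_2$ are the corresponding $A_{-\delta}[\cdot]+A_{\delta}[\cdot]$ quantities.

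Two remarks then close the argument exactly as in part~I. First, since $u_1,u_2$ are non‑decreasing in $x$, the test gradients satisfy $p_x,p_y\ge 0$, so $\sigma(p_x),\sigma(p_y)$ are well defined and $\sigma(p_x)p_x,\ \sigma(p_y)p_y\ge 0$: the sign that lets $A_0$ play its elliptic role is preserved, and that is all the comparison argument uses about the coefficient. Second, write $\sigma(p_x)p_x S_1-\sigma(p_y)p_y S_2=\sigma(p_y)p_y(S_1-S_2)+(\sigma(p_x)p_x-\sigma(p_y)p_y)S_1$. For the first term, $\sigma(p_y)p_y\ge 0$ and, using only the maximality of $(t_*,s_*,x_*,y_*)$ and the structure of $A_0$ (the step identical to part~I), $S_1-S_2\le C\delta(\epsilon^{-1}+r)+Cr(1+\delta^{-1})$; together with the classical estimate $(x_*-y_*)^2+(t_*-s_*)^2\le\omega(\epsilon)\epsilon$ (hence $p_y=\mathcal O(\epsilon^{-1/2})$), this term goes to $0$ on sending $r\to0$ and then $\delta=\epsilon^2$, $\epsilon\to0$. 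For the second term, $p_x-p_y=r\psi'(x_*)\to 0$, so by continuity of $\sigma$ the prefactor $\sigma(p_x)p_x-\sigma(p_y)p_y\to 0$ as $r\to 0$, while $S_1$ is bounded for $\epsilon,\delta$ fixed. Passing to the limit ($r\to0$, then $\delta=\epsilon^2$, $\epsilon\to0$) gives $2\gamma\le 0$, a contradiction; hence $u_1\ge u_2$, and uniqueness of viscosity solutions follows by comparing a solution with itself.

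\textbf{The hard part will be} precisely the verification just sketched: making sure that replacing the spatial coefficient of Theorem~\ref{thm:gronwallvisc} by $\sigma(\partial_x u)$ is benign. This rests on the fact that at the doubled maximum the two solutions see the \emph{same} gradient up to the $\mathcal O(r)$ penalisation, and on the nonnegativity $\sigma(\partial_x u)\,\partial_x u\ge 0$ coming from monotonicity — which is why the hypotheses "$u_1,u_2$ non‑decreasing" and "$\sigma\ge 0$" are used. It is also, as the text stresses, why the exact Hilbert transform makes this easy: no propagation‑of‑Lipschitz step (as in Proposition~\ref{prop:estlip}) is required, the comparison principle of part~I operating directly on the $\delta$‑regularised integral inequalities with no a priori gradient bound on $u_1,u_2$.
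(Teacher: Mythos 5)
Your proof is correct and follows essentially the same route as the paper: a doubling-of-variables argument in which the only new ingredient compared to part I is that the coefficient $\sigma$ is evaluated at the test gradient $\epsilon^{-1}(x_*-y_*)$ on both sides of the doubled inequality, so that nonnegativity (from $x_*\ge y_*$ by monotonicity) preserves ellipticity, and the rest proceeds as for $\sigma\equiv 1$. You are slightly more careful than the paper's sketch in tracking the $r\psi$-localization, which makes the two gradients $p_x$ and $p_y$ differ by $r\psi'(x_*)=O(r)$, and you close this gap via continuity of $\sigma$; the paper defers the localization to "the rest of the argument" and displays both occurrences of $\sigma$ at exactly the same argument. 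One small remark on your write-up: the inequality "$S_1-S_2\le C\delta(\epsilon^{-1}+r)+Cr(1+\delta^{-1})$" has the wrong direction as stated — the maximality of $(t_*,s_*,x_*,y_*)$ gives a \emph{lower} bound of this form on $S_1-S_2$ (the $A_\delta[u_2](x_*)-A_\delta[u_1](y_*)$ piece has a sign and is only bounded below by $-Cr(1+\delta^{-1})$), and it is precisely this lower bound, combined with $\sigma(p_y)p_y\ge 0$, that feeds the contradiction; the conclusion you draw is nonetheless the right one.
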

\begin{proof}
We only sketch the main lines of the proof. For $\epsilon > 0$, consider a point $(t_*,s_*,x_*,y_*)$ of maximum of 
\[
(t,s,x,y) \to u_1(t,x) - u_2(s,y) - \frac{1}{2\eps}((t-s)^2 + (x-y)^2),
\]
and assume that $t_*,s_* > 0$. Using that $u_1$ is a viscosity sub-solution and $u_2$ a viscosity super-solution, we obtain that for all $\delta > 0$
\[
\eps^{-1}(t_*-s_*)  + \sigma(\eps^{-1}(x_*-y_*))\eps^{-1}(x_*-y_*)\left(A_{-\delta}[\phi_\eps](x_*) + A_\delta[u_1(t_*)](x_*)\right) \leq 0,
\]
\[
\eps^{-1}(t_*-s_*)  + \sigma(\eps^{-1}(x_*-y_*))\eps^{-1}(x_*-y_*)\left(A_{-\delta}[\psi_\eps](y_*) + A_\delta[u_2(s_*)](y_*)\right) \geq 0,
\]
where $\phi_\eps(x) = \frac{1}{2\eps}(x-y_*)^2$ and $\psi_\eps(y) = -\frac{1}{2\eps}(y-x_*)^2$. Since $u_1(t_*,\cdot)$ is non-decreasing we deduce as usual that $x_* \geq y_*$. Recall that from the definition of the point of maximum, $A_\delta[u_1(t_*)](x_*) - A_\delta[u_2(s_*)](y_*) \leq 0$. Hence, taking the difference of the two relations leads to 
\[
C\sigma(\eps^{-1}(x_*-y_*))\eps^{-1}(x_*-y_*)\delta \leq 0,
\]
where we used the regularity of $\phi_\eps$ and $\psi_\eps$ to estimate the terms in $A_{-\delta}$. The previous is the main argument to prove a comparison principle and the rest of the argument follows what we did in the proof of Theorem \ref{thm:gronwallvisc}, namely to localize the point of maximum or to arrive at a contradiction if it not reached for $t_*s_* = 0$.
\end{proof}

\subsection{Two systems in interaction}
Another natural extension concerns the interaction of two systems governed by two equations of the type of \eqref{dyson}. Namely, consider the system
\be\label{dyson:system}
\partial_t m_i + \partial_x(m_iH[m_i]) + \partial_x(b_i[m_1,m_2]m_i) = 0 \text{ in } (0,\infty)\times \R,
\ee
for $i \in \{1;2\}$. This system models two systems who are each driven by the Dyson Brownian motion but who also feels an additional force ($b_i$ for the equation $i$) which depends on the state of the two systems. The following holds.
\begin{Prop}
Assume that $b_1,b_2 : \mathcal{P}(\R)^2 \to C^{0,1}(\R)$ are smooth and bounded operators, and consider two probability measures $m_{0,1}$ and $m_{0,2}$ in $\mathcal{P}_2(\R)$. Then, there exists a unique solution $(m_1,m_2)$ of \eqref{dyson:system} which is characterized as being the derivative of the couple $(F_1,F_2)$, unique viscosity solutions of 
\[
\partial_t F_i + \partial_x F_iH[\partial_x F_i] + b_i(\partial_x F_1,\partial_x F_2)\partial_x F_i  = 0 \text{ in } (0,\infty)\times \R,
\]
with initial condition $F_i(0,x) = m_{0,i}((-\infty,x])$.
\end{Prop}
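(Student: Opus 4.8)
The plan is to reduce the statement to a coupled version of the viscosity-solution theory already developed in part I for the single Dyson equation, treating the forcing terms $b_i(\partial_x F_1,\partial_x F_2)\partial_x F_i$ as drift terms of the type covered by \eqref{dyson:gen}. The key structural observation is that, because $b_1,b_2$ take values in $C^{0,1}(\R)$ and are bounded, each equation for $F_i$ is of the form \eqref{dyson:gen} with the exact Hilbert transform (so $c \equiv 1$) and a Lipschitz, bounded drift $b(x) := b_i(\partial_x F_1,\partial_x F_2)(x)$; the only genuinely new feature is that this drift is not given a priori but depends on the unknowns through the coupling.

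First I would set up a fixed-point scheme. Given a candidate pair $(\mu_1,\mu_2)$ of continuous curves in $(\mathcal P_2(\R),W_2)$, freeze the drifts $b_i^{\,\mathrm{ext}}(t,x) := b_i(\mu_1(t),\mu_2(t))(x)$, which by hypothesis are bounded and uniformly Lipschitz in $x$; then solve the two \emph{decoupled} equations $\partial_t F_i + \partial_x F_i H[\partial_x F_i] + b_i^{\,\mathrm{ext}}(t,x)\partial_x F_i = 0$ using the well-posedness and comparison theory from part I and from Section \ref{sec:cor} above (the comparison principle does not rely on the exact form of the singular operator, only on ellipticity, and a bounded Lipschitz-in-$x$ drift is admissible). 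This produces new curves $(m_1,m_2)$ with $m_i(t) = \partial_x F_i(t,\cdot)$, defining a map $\Phi:(\mu_1,\mu_2)\mapsto(m_1,m_2)$. Second, I would check that $\Phi$ maps a suitable complete metric space into itself — curves in $C([0,T],(\mathcal P_2,W_2))$ starting from $m_{0,i}$, with a uniform second-moment bound coming from the boundedness of the $b_i$ and a Grönwall estimate on $\int x^2\, dm_i$. Third, using the Wasserstein contraction for the pure Dyson flow (Proposition \ref{prop:decreas}) together with a Grönwall-type estimate accounting for the Lipschitz perturbation by the drift, and the smoothness (Lipschitz dependence) of $b_i$ as operators on $\mathcal P(\R)^2$, I would show $\Phi$ is a contraction on $C([0,T_0],(\mathcal P_2,W_2)^2)$ for $T_0$ small, hence has a unique fixed point; this fixed point is, by construction, exactly the solution whose components are derivatives of the viscosity solutions $F_i$. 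Iterating on successive time intervals extends the solution to all of $[0,\infty)$, since the a priori bounds do not deteriorate.

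For uniqueness of the solution characterized through the $F_i$, I would argue directly: if $(F_1,F_2)$ and $(\tilde F_1,\tilde F_2)$ are two such viscosity-solution pairs, then $\|b_i(\partial_x F_1,\partial_x F_2) - b_i(\partial_x \tilde F_1,\partial_x \tilde F_2)\|_{C^{0,1}}$ is controlled by $W_2(\partial_x F_1,\partial_x \tilde F_1) + W_2(\partial_x F_2,\partial_x \tilde F_2)$ by smoothness of $b_i$, and then the comparison principle of part I applied to each equation (with the drift treated as a known bounded measurable perturbation, exactly as in Lemma \ref{lemma:complip} and the Grönwall-in-sup-norm argument of Theorem \ref{thm:gronwallvisc}) closes a Grönwall loop forcing the two pairs to coincide.

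The main obstacle I anticipate is the contraction estimate in Wasserstein distance for the perturbed flow: the Wasserstein contraction in Proposition \ref{prop:decreas} is proved for the \emph{pure} Dyson flow, and one must verify that adding a bounded Lipschitz-in-space drift $b_i^{\,\mathrm{ext}}$ only costs an $e^{Ct}$ factor rather than destroying the estimate. This should follow by running the same coupling-of-characteristics argument as in the proof of Proposition \ref{prop:decreas} but with the extra term $\mathbb E[(X_t-Y_t)(b_1^{\,\mathrm{ext}}(t,X_t)-b_1^{\,\mathrm{ext}}(t,Y_t))]$, which is $\le C_b\,\mathbb E[|X_t-Y_t|^2]$ by the Lipschitz bound, plus a term measuring the discrepancy between the two frozen drifts, bounded by $W_2$ of the inputs — precisely what is needed to close the contraction. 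A secondary technical point is justifying that the characteristic ODEs make sense, which is handled exactly as in Proposition \ref{prop:decreas} via Cotlar's identity after an $L^\infty$-regularization, or alternatively one can bypass characteristics entirely and run the whole fixed-point argument at the level of the cumulative distribution functions $F_i$ using the comparison principle, which is the more robust route and the one I would ultimately favor.
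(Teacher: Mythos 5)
Your proposal matches the paper's proof in all essential respects: existence via a fixed-point argument on the frozen-drift equations, with the key $W_2$ stability estimate obtained by the same coupling-of-characteristics computation you identified as the main obstacle (it is indeed run exactly as in Proposition \ref{prop:decreas}, yielding $W_2^2(m^b(t),m^{b'}(t))\le(e^{Ct}-1)\|b-b'\|_\infty^2$), and uniqueness via the sup-norm Gr\"onwall estimate on the CDFs from the proof of Theorem \ref{thm:gronwallvisc}. The only minor deviation is that you suggest one might prefer to run existence purely at the CDF level as well, whereas the paper keeps the characteristics coupling for existence and switches to CDFs only for uniqueness; either variant works.
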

\begin{proof}
The existence follows easily from a compactness argument. Fix $T > 0$ and denote by $B := b_1(\mathcal{P}(\R))\cup b_2(\mathcal{P}(\R))$. Take $b : \R_+\to B$ and consider the solution $m^b$ of 
\[
\partial_t m^b + \partial_x(m^bH[m^b]) + \partial_x(bm^b) = 0 \text{ in } (0,T)\times \R.
\]
Take a time $t \in [0,T]$ and an optimal coupling $(X_t,Y_t)$ for the squared Wasserstein distance between $m^{b'}(t)$ and $m^b(t)$. Consider now the SDE
\[
\begin{aligned}
&dX_s = H[\mathcal{L}(X_s)](X_s)ds + b'(s,X_s)ds\\
&dY_s = H[\mathcal{L}(Y_s)](Y_s)ds + b(s,Y_s)ds.
\end{aligned}
\]
Then it follows from the computation of Proposition \ref{prop:decreas} that
\[
\ba
\frac{d}{ds}\mathbb{E}[|X_s -Y_s|^2] &\leq \mathbb{E}[(X_s -Y_s)(b'(s,X_s) - b(s,Y_s)]\\
& =  \mathbb{E}[(X_s-Y_s)(b(s,X_s) - b(s,Y_s))] +  \mathbb{E}[(X_s-Y_s)(b'(s,X_s) - b(s,X_s))]  \\
&\leq C \mathbb{E}[|X_s-Y_s|^2]+ \frac{1}{2}\|b-b'\|_\infty^2.
\ea
\]
From Gr\"onwall's Lemma, we obtain that there exists $C$ such that 
\[
W_2^2(m^b(t),m^{b'}(t)) \leq (e^{Ct}-1)\|b-b'\|_\infty^2.
\]
Since there exists $C >0$ such that for all $b \in B$, 
\[
W_2^2(m^b(t), m^b(s)) \leq C\sqrt{|t-s|},
\]
we deduce the existence of a solution of the system from standard fixed point theorems.\\

The uniqueness part follows the line of the Gr\"onwall estimate used in the proof of Theorem \ref{thm:gronwallvisc}. Denote by $(m_1,m_2)$ and $(\tilde m_1,\tilde m_2)$ two solutions of the system \eqref{dyson:system}. We define $u_1(t,x) = \int_{-\infty}^xm(t,y)dy$ and similarly for $u_2,$ $\tilde u_1$ and $\tilde u_2$. Define $M(t) := \|(u_1(t),u_2(t))-(\tilde u_1(t),\tilde u_2(t))\|_\infty$. From the regularity we assumed on $b_1$ and $b_2$, we deduce (from the proof of Theorem \ref{thm:gronwallvisc}) that there exists $C > 0$ such that for $i=1,2$, $t\geq 0$
\[
\|u_i(t) - \tilde u_i(t)\|_\infty \leq C \int_0^tM(s)ds.
\]
Hence, we obtain the uniqueness of a solution of \eqref{dyson:system}
\end{proof}

\subsection{Reflexion at the boundary}
We explain how we can model the reflexion of eigenvalues at a certain boundary. This classical question in stochastic analysis is now well understood, especially since the seminal works \citep{lions1984stochastic,lions1981construction}. Namely, we want to explain why the reflexion of the eigenvalues of the matrix on a certain maximum level $R_0$ does not perturb the mathematical analysis. We do not provide the full study of such a phenomenon but rather explain why this kind of reflexion does not create a singularity near $R_0$. We follow the approach of \citep{lions1981construction}.

To make the matter more precise, consider an element $m_0\in \mathcal{P}(\R)$ with bounded density, still denoted $m_0$ and  consider for $\epsilon > 0$ the following equation
\be\label{reflexion:penal}
\partial_t m + \partial_x(mH[m]) + \partial_x\left( \frac 1 \eps (x - R_0)_+ m(t,x)\right) = 0 \text{ in } (0,\infty)\times \R.
\ee

This kind of penalization is well known to approximate the reflexion at $R_0$, or equivalently the Neumann Boundary condition at $R_0$. We can prove the following result.
\begin{Lemma}
For all $\eps > 0$, the solution $m$ of \eqref{reflexion:penal} satisfies for all $t\geq 0, x \geq 0$ $m(t,x) \leq \|m_0\|_\infty$.
\end{Lemma}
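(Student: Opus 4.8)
The plan is to run a maximum principle argument on the density $m$ directly, exactly in the spirit of Proposition~\ref{pro:reg}. Assume for the moment that $m$ is smooth enough to justify the computations; the general case then follows by the same regularization procedure used throughout Section~\ref{sec:reg} (approximating $m_0$ by smooth densities, solving the viscously regularized equation, and passing to the limit). Fix $t > 0$ and let $\bar x$ be a point where $x \mapsto m(t,x)$ attains a (local) maximum; such a point exists because $m(t,\cdot) \in L^1$ and, by standard parabolic regularity for \eqref{reflexion:penal}, is smooth and decays at infinity. Evaluating \eqref{reflexion:penal} at $(t,\bar x)$ and using $\partial_x m(t,\bar x) = 0$, one finds
\[
\partial_t m(t,\bar x) + m(t,\bar x)\,H[\partial_x m(t,\cdot)](\bar x) + \frac{1}{\eps}(\bar x - R_0)_+\,\partial_x m(t,\bar x) + \frac{1}{\eps}\mathbb{1}_{\{\bar x > R_0\}}\, m(t,\bar x) = 0.
\]

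The key point is that each of the last three contributions has the right sign at a maximum. The term $\frac{1}{\eps}(\bar x-R_0)_+\partial_x m(t,\bar x)$ vanishes since $\partial_x m(t,\bar x)=0$; the penalization term $\frac{1}{\eps}\mathbb{1}_{\{\bar x>R_0\}}m(t,\bar x)$ is nonnegative (using $m \geq 0$, which itself propagates from $m_0 \geq 0$); and, exactly as in the proof of Proposition~\ref{pro:reg}, writing $H[\partial_x m(t,\cdot)](\bar x) = A_0[m(t,\cdot)](\bar x) = \int_{\R} \frac{m(t,\bar x)-m(t,y)}{(\bar x - y)^2}\,dy \geq 0$ because $\bar x$ is a global maximum. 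Therefore $\partial_t m(t,\bar x) \leq 0$ at every maximum point, which by the usual argument yields $\frac{d}{dt}\|m(t,\cdot)\|_\infty \leq 0$, hence $\|m(t,\cdot)\|_\infty \leq \|m_0\|_\infty$ for all $t$.

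Two technical points need care. First, one must ensure $m \geq 0$ so that the penalization term is a good sign: this is a standard consequence of the structure of \eqref{reflexion:penal} as a transport equation with (regularized) velocity field, propagating nonnegativity of $m_0$; alternatively one argues at a negative minimum and checks all signs reverse. Second, the coefficient $\frac1\eps(x-R_0)_+$ is only Lipschitz (not smooth) at $x = R_0$, so the bootstrap smoothing of Lemma~\ref{lemma:existsmooth} must be adapted — but since this coefficient is Lipschitz, bounded on any bounded set, and one only needs enough regularity to locate a maximum and evaluate the equation there, this causes no essential difficulty. I expect this second point — justifying that smooth approximations exist and that the $L^\infty$ bound passes to the limit uniformly in $\eps$ — to be the only real obstacle; the maximum-principle computation itself is immediate once the signs of the extra terms are identified.
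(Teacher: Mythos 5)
Your proposal is correct and follows essentially the same route as the paper: expand the equation at a point of local maximum of $m(t,\cdot)$, observe that the transport term $\frac{1}{\eps}(x-R_0)_+\partial_x m$ vanishes there, that the zeroth-order penalty term $\frac{1}{\eps}\mathbb{1}_{\{x>R_0\}}m$ is nonnegative (by $m\geq 0$), and that $A_0[m](\bar x)\geq 0$ at the maximum, giving $\partial_t m(t,\bar x)\leq 0$ and hence monotonicity of $\|m(t)\|_\infty$. The paper likewise invokes the regularization of Lemma~\ref{lemma:existsmooth} to justify the computation, so your remarks on the Lipschitz-but-not-smooth coefficient $(x-R_0)_+/\eps$ are the same technical point the paper has in mind.
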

\begin{proof}
The result follows form a standard a priori estimate once again. It is immediate to check that, if $m$ is smooth, for any $t \geq 0$, at any point of local maximum $\bar x$ of $m(t,\cdot)$, 
\[
\partial_t m(t,\bar x) + m(t,\bar x) A_0[m(t,\cdot)](\bar x) + \frac 1 \eps \mathbb{1}_{\{ \bar x \geq R_0\}}m(t,\bar{x}) = 0.
\]
Which implies the result since the same kind of argument as in Lemma \ref{lemma:existsmooth} can be carried on to justify this computation.
\end{proof}
With the help of this Lemma, it is now entirely classical to establish the following.
\begin{Theorem}
Let $u_0 \in W^{1,\infty}(\R)$. The sequence $(u_\eps)_{\eps > 0}$ of solution of \eqref{reflexion:penal} converges locally uniformly toward the unique viscosity solution of 
\[
\ba
\partial_t u + \partial_x u A_0[u] = 0 \text{ in } (0,\infty)\times (-\infty,R_0),\\
\partial_x u (t,R_0) = 0 \text{ for all } t > 0.
\ea
\]
Moreover, this solution satisfies
\[
u(t,x) = 1 \text{ for all } t > 0, x \geq R_0.
\]
\end{Theorem}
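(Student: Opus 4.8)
The argument is the standard penalization scheme via half-relaxed limits (Barles--Perthame), following \citep{lions1981construction}, with extra care for the nonlocal operator $A_0$. Write $u_\eps(t,x):=m_\eps(t,(-\infty,x])$; integrating \eqref{reflexion:penal} in $x$ shows that $u_\eps$ is the (unique, its existence being covered by the preceding analysis after a harmless truncation of the linearly growing drift far from $R_0$) viscosity solution of
\[
\partial_t u_\eps + \partial_x u_\eps\, A_0[u_\eps] + \tfrac1\eps (x-R_0)_+\, \partial_x u_\eps = 0 \ \text{ in } (0,\infty)\times\R,\qquad u_\eps(0,\cdot)=u_0 .
\]
The first step collects bounds uniform in $\eps$: one has $0\le u_\eps\le1$, each $u_\eps(t,\cdot)$ is non-decreasing, and by the preceding Lemma $\partial_x u_\eps=m_\eps\le\|m_0\|_\infty$, so the family is equi-Lipschitz in $x$, uniformly in $(t,\eps)$. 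Hence the half-relaxed limits
\[
\bar u(t,x):=\limsup_{\eps\to0,\,(s,y)\to(t,x)}u_\eps(s,y),\qquad \underline u(t,x):=\liminf_{\eps\to0,\,(s,y)\to(t,x)}u_\eps(s,y)
\]
are well defined, satisfy $0\le\underline u\le\bar u\le1$, are non-decreasing and Lipschitz in $x$, and agree with $u_0$ at $t=0$ (by a standard barrier at the initial time, using comparison for $u_\eps$).

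The second, and main, step is to show $\bar u\equiv\underline u\equiv1$ on $(0,\infty)\times[R_0,\infty)$. I would do this by tracking a weighted mass above the barrier, $\Phi_\eps(t):=\int_\R \varphi(x)\,m_\eps(t,x)\,dx$, with $\varphi$ non-decreasing, $\varphi\equiv0$ on $(-\infty,R_0]$ and $\varphi(x)\sim (x-R_0)$ near $R_0$ (a bounded such $\varphi$ if $m_0$ has no finite moments). Differentiating and integrating by parts, the penalization term contributes (with the sign that confines mass to $\{x\le R_0\}$) a quantity $\le -c\,\eps^{-1}\Phi_\eps$, while the Dyson contribution $\int \varphi'\, m_\eps H[m_\eps]$ is bounded uniformly in $\eps$ by $\|m_\eps\|_{L^2}\|H[m_\eps]\|_{L^2}\le C$, using $\|m_\eps\|_\infty\le\|m_0\|_\infty$, $\|m_\eps\|_{L^1}=1$ and the $L^p$-boundedness of the Hilbert transform. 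Gr\"onwall's Lemma then yields $\Phi_\eps(t)\le\Phi_\eps(0)e^{-ct/\eps}+C\eps\to0$ for each $t>0$, so $m_\eps(t,(R_0+\eta,\infty))\to0$ for every $\eta>0$; combined with the uniform Lipschitz bound this gives $u_\eps(t,x)\to1$ for every $x\ge R_0$ and $t>0$, which is the claim.

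Third, I would identify $\bar u$ and $\underline u$ as, respectively, a viscosity sub- and a super-solution of the limit problem, i.e. of $\partial_t u+\partial_x u\,A_0[u]=0$ in $(0,\infty)\times(-\infty,R_0)$ together with the Neumann condition at $R_0$, keeping the constraint $u\equiv1$ on $[R_0,\infty)$ (established in Step 2) as part of the data needed to evaluate $A_0$. At an interior test point $x_0<R_0$ this is the usual stability of viscosity solutions under half-relaxed limits: the maximizing points $x_n\to x_0$ are eventually $<R_0$, so the penalization term vanishes identically nearby and drops out in the limit, while the nonlocal term is passed to the limit via the $A_{-\delta}[\phi]+A_\delta[u]$ splitting exactly as in the stability arguments of \citep{bertucci2022spectral}. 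At the boundary point $x_0=R_0$, the one-sided penalization yields, in the limit, either the equation or $\partial_x u(t,R_0)=0$, which is precisely the viscosity Neumann condition, recovered in the classical manner of \citep{lions1981construction}.

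Finally, the limit problem enjoys a comparison principle, obtained by adapting Proposition 4.4 of \citep{bertucci2022spectral}: the doubling-of-variables argument is as there (one may again localize with a term $r\psi(x)$ as in the proof of Theorem \ref{thm:gronwallvisc}), the only new point being the test points near $x=R_0$, treated by the now-classical analysis of Neumann conditions in viscosity solution theory \citep{lions1981construction}. Since $\bar u$ is a sub-solution, $\underline u$ a super-solution, and $\bar u(0,\cdot)=\underline u(0,\cdot)=u_0$, comparison gives $\bar u\le\underline u$; with the trivial reverse inequality this forces $\bar u=\underline u=:u$, which is then the unique continuous viscosity solution and, by the very definition of the half-relaxed limits, the locally uniform limit of $(u_\eps)_{\eps>0}$; the identity $u\equiv1$ on $[R_0,\infty)$ is Step 2. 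The main obstacle is Step 2 together with the boundary analysis in Step 3: making the heuristic ``the penalization pushes all mass below $R_0$'' quantitative while carrying along the nonlocal operator $A_0$, which -- unlike in classical local penalization theory -- reaches across the boundary, so that the constraint $u\equiv1$ on $[R_0,\infty)$ must be kept consistent throughout the stability and comparison arguments.
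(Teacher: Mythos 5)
The paper itself supplies no proof of this theorem: after the $L^\infty$ Lemma it only asserts that the rest ``is now entirely classical,'' so your half-relaxed-limits penalization scheme in the spirit of \citep{lions1981construction} is the right framework to fill the gap. However, your Step 2 is in genuine tension with equation \eqref{reflexion:penal} as written. In conservative form the penalization drift is $b_\eps(x)=\eps^{-1}(x-R_0)_+$, which is \emph{positive} for $x>R_0$, so it transports mass \emph{away} from $(-\infty,R_0]$: along the characteristics of the linear model ($H[m]\equiv0$) one finds $u_\eps(t,x)\to u_0(R_0)$ for every $x>R_0$, not $1$. Your parenthetical ``(with the sign that confines mass to $\{x\le R_0\}$)'' is doing real work: with the drift as written your computation gives $\frac{d}{dt}\Phi_\eps\geq +c\eps^{-1}\Phi_\eps+\dots$, and only becomes $\le -c\eps^{-1}\Phi_\eps+\dots$ if the penalization is $-\partial_x(\eps^{-1}(x-R_0)_+m)$. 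Note moreover that the Lemma's $L^\infty$ bound, as proved, really does need the ``$+$'' sign (it relies on $\eps^{-1}\mathbb{1}_{\{x>R_0\}}m\geq 0$ at the interior maximum), so one cannot silently flip the sign without also revisiting the a priori estimate on which the whole compactness argument rests. This inconsistency between \eqref{reflexion:penal}, the Lemma, and the conclusion $u\equiv 1$ on $[R_0,\infty)$ must be resolved before Step 2 stands.

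Two smaller points. Even with the corrected sign, your inequality $\varphi'(x)(x-R_0)_+\geq c\,\varphi(x)$ fails for any \emph{bounded} non-decreasing $\varphi$ (for large $x$, $\varphi'\to 0$ while $\varphi\not\to0$), and $u_0\in W^{1,\infty}$ gives $m_0$ a bounded density but no finite first moment, so you cannot take $\varphi(x)=(x-R_0)_+$ either; the Gr\"onwall step therefore needs a slightly different test function or a splitting of $\Phi_\eps$ into near- and far-boundary pieces combined with the uniform $L^\infty$ bound. Finally, in Steps 3--4, invoking the comparison principle for the limit Neumann problem as ``now-classical'' is optimistic: because $A_\delta[u]$ reads values of $u$ on all of $\R$, the constraint $u\equiv1$ on $[R_0,\infty)$ has to be threaded through both the stability passage and the doubling-of-variables argument, and this genuinely goes beyond the purely local setting of \citep{lions1981construction}; the adaptation is plausible given Proposition 4.4 of \citep{bertucci2022spectral}, but it deserves to be written out rather than cited.
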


\section*{Acknowledgments}
The authors have been partially supported by the Chair FDD/FIME (Institut Louis Bachelier). The first author has been partially supported by the Lagrange Mathematics and Computing Research Center.
\bibliographystyle{plainnat}
\bibliography{bibmatrix}

\begin{thebibliography}{13}
\providecommand{\natexlab}[1]{#1}
\providecommand{\url}[1]{\texttt{#1}}
\expandafter\ifx\csname urlstyle\endcsname\relax
  \providecommand{\doi}[1]{doi: #1}\else
  \providecommand{\doi}{doi: \begingroup \urlstyle{rm}\Url}\fi

\bibitem[Anderson et~al.(2010)Anderson, Guionnet, and
  Zeitouni]{anderson2010introduction}
Greg~W Anderson, Alice Guionnet, and Ofer Zeitouni.
\newblock \emph{An introduction to random matrices}.
\newblock Number 118. Cambridge university press, 2010.

\bibitem[Baik et~al.(2005)Baik, Ben~Arous, and P{\'e}ch{\'e}]{baik}
Jinho Baik, G{\'e}rard Ben~Arous, and Sandrine P{\'e}ch{\'e}.
\newblock Phase transition of the largest eigenvalue for nonnull complex sample
  covariance matrices.
\newblock \emph{Annals of Probability}, pages 1643--1697, 2005.

\bibitem[Bertucci et~al.(2022)Bertucci, Debbah, Lasry, and
  Lions]{bertucci2022spectral}
Charles Bertucci, M{\'e}rouane Debbah, Jean-Michel Lasry, and Pierre-Louis
  Lions.
\newblock A spectral dominance approach to large random matrices.
\newblock \emph{Journal de Math{\'e}matiques Pures et Appliqu{\'e}es},
  164:\penalty0 27--56, 2022.

\bibitem[Biane(1997)]{biane1997free}
Philippe Biane.
\newblock On the free convolution with a semi-circular distribution.
\newblock \emph{Indiana University Mathematics Journal}, pages 705--718, 1997.

\bibitem[Chan(1992)]{chan}
Terence Chan.
\newblock The wigner semi-circle law and eigenvalues of matrix-valued
  diffusions.
\newblock \emph{Probability theory and related fields}, 93\penalty0
  (2):\penalty0 249--272, 1992.

\bibitem[Dubach and Erd{\H{o}}s(2023)]{dubach}
Guillaume Dubach and L{\'a}szl{\'o} Erd{\H{o}}s.
\newblock Dynamics of a rank-one perturbation of a hermitian matrix.
\newblock \emph{Electronic Communications in Probability}, 28:\penalty0 1--13,
  2023.

\bibitem[Forcadel et~al.(2009)Forcadel, Imbert, and
  Monneau]{forcadel2009homogenization}
Nicolas Forcadel, Cyril Imbert, and R{\'e}gis Monneau.
\newblock Homogenization of some particle systems with two-body interactions
  and of the dislocation dynamics.
\newblock \emph{Discrete and continuous dynamical systems-series A},
  23\penalty0 (3):\penalty0 pp--785, 2009.

\bibitem[Imbert and Monneau(2008)]{imbert2008homogenization}
Cyril Imbert and R{\'e}gis Monneau.
\newblock Homogenization of first-order equations with-periodic hamiltonians.
  part i: Local equations.
\newblock \emph{Archive for Rational Mechanics and Analysis}, 187\penalty0
  (1):\penalty0 49--89, 2008.

\bibitem[Imbert et~al.(2008)Imbert, Monneau, and
  Rouy]{imbert2008homogenization2}
Cyril Imbert, R{\'e}gis Monneau, and Elisabeth Rouy.
\newblock Homogenization of first order equations with
  (u/$\varepsilon$)-periodic hamiltonians part ii: Application to dislocations
  dynamics.
\newblock \emph{Communications in Partial Differential Equations}, 33\penalty0
  (3):\penalty0 479--516, 2008.

\bibitem[Lions(2021-2022)]{lions2021cours}
Pierre-Louis Lions.
\newblock Cours au college de france.
\newblock \emph{Available at www. college-de-france. fr}, 2021-2022.

\bibitem[Lions and Sznitman(1984)]{lions1984stochastic}
Pierre-Louis Lions and Alain-Sol Sznitman.
\newblock Stochastic differential equations with reflecting boundary
  conditions.
\newblock \emph{Communications on pure and applied Mathematics}, 37\penalty0
  (4):\penalty0 511--537, 1984.

\bibitem[Lions et~al.(1981)Lions, Menaldi, and Sznitman]{lions1981construction}
Pierre-Louis Lions, Jose~Luis Menaldi, and Alain-Sol Sznitman.
\newblock Construction de processus de diffusion r{\'e}fl{\'e}chis par
  p{\'e}nalisation du domaine.
\newblock \emph{Comptes Rendus de l'Acad\'emie des Sciences de Paris},
  292:\penalty0 559--562, 1981.

\bibitem[Rogers and Shi(1993)]{rogers}
Leonard~CG Rogers and Zhan Shi.
\newblock Interacting brownian particles and the wigner law.
\newblock \emph{Probability theory and related fields}, 95\penalty0
  (4):\penalty0 555--570, 1993.

\end{thebibliography}

\end{document}